\documentclass[journal]{IEEEtran}
%\documentclass[12pt,draftcls,onecolumn]{IEEEtran}
%\usepackage{setspace}
%\doublespacing  
\usepackage[utf8]{inputenc}
\usepackage{mathtools}
\usepackage{amssymb}
\usepackage{bbold}
\usepackage[yyyymmdd,hhmmss]{datetime}
\usepackage{bm,upgreek}
\usepackage{color}
\mathtoolsset{showonlyrefs,showmanualtags}
\usepackage{dsfont}
\usepackage{booktabs}
\usepackage[final]{graphicx}
\usepackage{nth}
\usepackage{tikz}
\usetikzlibrary{quotes,angles}

\usepackage{cite}

\DeclareMathOperator{\Binopdf}{binopdf}
\newcommand{\Ninf}[1]{ \| #1 \| }
\DeclareMathOperator{\Conv}{Conv}
\newcommand{\Exp}[1]{\mathbb{E}[ #1]}
\newcommand{\Mspace}{\mathcal{E}_n(\mathcal{X})}
\newcommand{\Mspacei}{\mathcal{E}_{n-1}(\mathcal{X})}
\newcommand{\Prob}[1]{\mathbb{P}\left(#1\right)} 
\newcommand{\ID}[1]{ \mathbb{1} (#1 ) }      
\newcommand{\argmin}{\operatornamewithlimits{argmin}} 
\newcommand{\GDSS}{g}
\newcommand{\GNS}{g}
\usepackage{amsthm}

\newtheorem{Assumption}{Assumption}
\newtheorem{Problem}{Problem}
\newtheorem{Lemma}{Lemma}
\newtheorem{remark}{Remark}
\newtheorem{Definition}{Definition}
\newtheorem{Theorem}{Theorem}
\newtheorem{Corollary}{Corollary}
\newcommand{\edit}[1]{\textcolor{black}{#1}}

%\title{\LARGE \bf
%Modern Mean-Field Games with an Arbitrary Number of Players: A Controlled Markov Chain Model}

%\title{\LARGE \bf \edit{Nash Bargaining Games with  Homogeneous Markov Decision Processes under Decentralized Information Structure}
% }
%\title{\LARGE \bf Nash Bargaining Games with  Homogeneous Markov Decision Processes
% }
%\title{\LARGE \bf  A Sequential Equilibrium in Dynamic  Games with Homogeneous Markov Decision Processes:  Empirical Mean-Field Games
% }

%\title{\LARGE \bf   Sequential Nash Equilibria in Cooperative and Non-cooperative Exchangeable  Games  with Imperfect Information Structures 
% }

\title{\LARGE \bf   Deep Nash and Sequential Mean-Field Equilibria in Cooperative and Non-cooperative  Games  with Imperfect Information Structures 
 }
%\title{\LARGE \bf
%Dynamic Games with Homogeneous Players: A Markov Decision Process Formulation  under Decentralized Information Structures
% }

\author{Jalal Arabneydi and Amir G. Aghdam% <-this % stops a space
\thanks{ This work has been supported in part by the Natural Sciences and Engineering Research Council of Canada (NSERC) under Grant RGPIN-262127-17, and in part by Concordia University under Horizon Postdoctoral Fellowship.}  
\thanks{Jalal Arabneydi and Amir G. Aghdam are with the  Department of Electrical and Computer Engineering, 
        Concordia University, 1455 de Maisonneuve Blvd, Montreal, QC, Canada. 
        {\tt\small Email:jalal.arabneydi@mail.mcgill.ca} and
        {\tt\small Email:aghdam@ece.concordia.ca}}%

Initial draft: Mar. \nth{6}, 2018.
}

\begin{document}
\maketitle
\thispagestyle{empty}
\pagestyle{empty}

%%%%%%%%%%%%%%%%%%%%%%%%%%%%%%%%%%%%%%%%%%%%%%%%%%%%%%%%%%%%%%%%%%%%%%%%%%%%%%%
\begin{abstract}
A class of  nonzero-sum stochastic dynamic games with  imperfect  information structure  is  investigated. The game involves  an arbitrary number of players, modeled as  homogeneous Markov decision processes,  aiming to find a sequential Nash equilibrium. The players  are coupled in both dynamics and  cost functions through the empirical distribution of  states and actions of players.  Two non-classical information structures are considered: deep state sharing and no-sharing, where deep state refers to the empirical distribution of the states of players.   In the former, each player observes its local state  as well as the deep state while in the latter  each player observes only  its local state.  For both finite- and infinite-horizon cost functions,  a  sequential  equilibrium, called deep Nash equilibrium,  is identified,  where the dynamics of deep state resembles a convolutional neural network.    In addition,  an approximate sequential equilibrium, called sequential mean-field equilibrium,   under  no-sharing information structure  is  proposed,  whose performance  converges to that of the deep Nash  equilibrium  despite the fact that the  strategy  is not necessarily continuous with respect to  the deep state.   The proposed strategies are robust to  trembling-hand imperfection in both microscopic and macroscopic levels. Finally, the extension to multiple sub-populations  and   arbitrarily-coupled (asymmetric) cost functions are demonstrated.
\end{abstract}
\section{Introduction}

%
%
%Game theory is ubiquitous  in decsion-making applications  including  finance, management and smart grids, wherein a group of players compete with each other in order to minimize (maximize) their  cost (utility) functions.  Game theory can be traced back to the seminal work of Von Neumann~\cite{neumann1928theorie} and John Nash~\cite{nash1951}  for static games and to Rufus Isaacs~\cite{isaacs1954differential} and Starr and Ho~\cite{starr1969a,starr1969b} for dynamic games. The interested reader is referred to~\cite{başarhandbook2018} for more details on the history of game theory.    

 %~\cite[Definition 224.1]{osborne1994course} sequential rationality
 %~\cite[Definition 224.2]{osborne1994course} consistency

 Nonzero-sum stochastic dynamic games are ubiquitous  in decision-making applications  such as   finance, management and smart grid, wherein a group of players compete with each other in order to minimize (maximize) their  cost (utility) functions.  It is well known that  when   every player perfectly observes the states of all players, a backward induction can be devised  to identify a subgame-perfect Nash equilibrium (called Markov perfect Nash equilibirum),   which is    a set of strategies satisfying  sequential rationality requirement~\cite{osborne1994course}.  
% Such a subgame perfect Nash  solution, also  known as  Markov perfect Nash equilibirum,  always exists within mixed strategies. 
  In practice, however, it  is not always  possible  to have perfect information about the  states of players  for various reasons such as the cost  of information (specially when the number of players is large) and the privacy of the players. In such a case, the information structure is imperfect, which generally  results in a  phenomenon, known as the \emph{infinite regress of the compound expectation}, wherein  every player must know what others know about what he/she knows about what they know and so on~\cite{harsanyi1967games}.  In the seminal work of Harsanyi~\cite{harsanyi1967games}, a method is proposed  to stop the infinite regression by imposing the common knowledge hypothesis  among players.   The solution concept in such games, known as Bayesian games, is a pair of strategy-belief (rather than only the strategy), where  the belief and strategy are consistent.   Since  the  belief space increases exponentially with time horizon as well as  the number of players, it is computationally difficult to find a tractable solution for  such  games with more than a few players~\cite{reif1984complexity}.   In addition, it is not always feasible to find a sequential equilibrium  in dynamic games  with imperfect information due to  the fact  that the belief of every player about the states of  other players  depends on the strategy of other players, in general.   As a result,  each  player's belief does not necessarily evolve in  a Markovian manner, i.e.,  the standard  backward induction is not applicable~\cite{ouyang2017dynamic}.
  %~\cite{nayyar2014common}.
%  ~\cite{ouyang2017dynamic}. 

Due to the above difficulties, mean-field games~\cite{HuangPeter2006,Lasry2007mean} were introduced more than a decade ago to provide an   approximate  solution  by exploiting  two key features:  negligible effect of individual players and the law of large numbers. More precisely,  since  the effect of  a single player on the infinite population is negligible,   the sequential rationality requirement  reduces  to a conventional dynamic programming decomposition, and   the  belief of every   player about other players reduces  to a common   belief (known as the mean-field)   that evolves \emph{deterministically} in time.  This type of  strategy-belief pair  is called \emph{mean-field equilibrium}, which  is presented in the form of two  coupled forward-backward equations, where the backward equation is  a Hamilton-Jacobi-Bellman equation and  the forward one is a  Fokker-Plank-Kolmogorov equation.  The consistency requirement is also established by  imposing various Lipschitz-type fixed-point conditions (that generally hold for small time horizons) or monotonicity-type assumptions (that are often difficult to verify). In order to show  that the mean-field equilibrium  constitutes  an approximate  equilibrium for the finite-population game,    the standard approach is  to assume  implicitly or explicitly that  the solution   is  continuous in the mean-field. For more details, the reader is referred to~\cite{Caines2018book} and references therein.
 
%~\cite{HuangPeter2006,gomes2010discrete,Lasry2007mean}.

%
%\edit{(e.g.,  local convex cost function and Lipschitz  strategy in~\cite{HuangPeter2006}) or  in the  performance space (e.g.,  monotonicity-type assumption in~\cite{Lasry2007mean}).   However, these assumptions are restrictive in practice and must be updated for sequential Nash.}

%\footnote{Note that in mean-field-type game, the mean-field is replaced by the probability distribution of a generic player's state; hence, continuity in the local state somewhat implies the continuity in the mean-field as well.}

While the  above  results  are  interesting and  useful in some applications,   an often overlooked  question is  that   to what extent   the mean-field equilibrium is practical.   For example, the mean-field equilibrium is not a  ``sequential''  equilibrium~\cite{kreps1982sequential} or ``trembling-hand''  equilibrium~\cite{selten1975reexamination} in the sense that  it does not take into account   the off-equilibrium-path  events  occurred at the macroscopic  level.  In other words,  players   agree upon  the trajectory of the mean-field  before the game starts,  but  there is no guarantee that they  hold on  to the initially agreed-upon belief  at  every stage of the game  if an unexpected  event  changes their belief about the mean-field   (e.g.,   small common mistakes).   In addition,  the decision of  each player at any stage of the game  depends not only on the past decisions of  the players but also on the future ones, which makes  the mean-field equilibrium  future-dependent.  Another practical concern is the unnatural assumption that the strategy is continuous in the mean field.  It is argued in~\cite{doncel2016meanb} that such assumption may remove many  meaningful  equilibria. In particular,  the authors in~\cite{doncel2016meanb}  provide a counterexample in which   the tit-for-tat principle is not applicable  because the deviation of a single player from the agreed-upon equilibrium  is invisible  to   the infinite population due to the negligible effect, meaning that  the deviant player will   not be penalized by other players according to the mean-field equilibrium (i.e.,  a single player can  take advantage of other players without facing any consequences).

In this paper,  inspired by~\cite{Jalal2019MFT} and~\cite{Jalal2019Automatica}, we take a different route from the above literature and study  a  game   consisting of an arbitrary number of homogeneous players with finite state and action spaces under two imperfect  information structures: deep-state sharing and no-sharing, where  the deep state refers to the empirical distribution of  the states of a finite population  (rather than that of an infinite one). Since the number of players  is not necessarily large,  the simplification afforded by the negligible  effect  is not applicable here.  We  are  interested in  index-invariant sequential Nash equilibria  and argue that a sustainable  equilibrium in homogeneous  games  must be  index-invariant  (fair) because any index-dependant  equilibrium can cause  discrimination against some  players that is based on   the way  the players are  indexed (labeled). This   discriminatory treatment naturally leads  to protest and   anarchy.   It is to be noted that  the mean-field equilibrium  is also  index-invariant because  every player uses the  strategy of a generic player.    Given that players are interested to  reach  a fair Nash equilibrium (agreement), we first  analyze  the dynamics of  the belief of  players under  index-invariant strategies, and  then  identify  a sequential equilibrium by developing  a dynamic programming decomposition under deep-state sharing information structure. Next, we  propose  an approximate sequential equilibrium under no-sharing information structure  that converges to the deep-state sharing (finite-population) solution  as the number of players goes to infinity.  In contrast to mean-field games that compute an  infinite-population equilibrium  and then impose some kind of continuity condition on the solution to make it  applicable  to the finite-population model, we study a  finite-population model  and  propose an approximate solution  without imposing any continuity assumption on the solution.   The  equilibria proposed in this paper  are not in the   form of coupled forward-backward equations. This feature allows one to  incorporate feedback information, pertaining to  the microscopic as well as macroscopic behaviours,  into the equilibria. Furthermore,  unlike the stationary mean-field equilibrium  that assumes the mean-field is stationary and does not change with time~\cite{adlakha2015equilibria},   we do not  restrict ourselves to stationary  mean-field  because the  mean-field  normally varies with time according to the dynamics of the players. 

The remainder of this paper is organized as follows.  The problem  is formulated  in Section~\ref{sec:prob}, and  the dynamics of the deep state is then  presented in Section~\ref{sec:pre}.  An exact solution under  deep state sharing and an approximate  one under  no-sharing  are  proposed in Section~\ref{sec:DSS}.  The main results are  then  extended to the   infinite-horizon discounted cost  function  in Section~\ref{sec:infinite}.  A numerical example is presented in Section~\ref{sec:numerical}, and   the paper is concluded in Section~\ref{sec:con}.

\section{Problem Formulation}\label{sec:prob}
\subsection{Notation}
In this paper, $\mathbb{R}$, $\mathbb{R}_{\geq 0}$ and $\mathbb{N}$  refer to  real, positive real and natural
numbers, respectively.   For any $k \in \mathbb{N}$, the finite set of integers $\{1,\ldots,k\}$
is denoted by $\mathbb{N}_k$.  Furthermore, $\Prob{\boldsymbol \cdot}$   is  the probability of a random variable; $\Exp{ \boldsymbol \cdot}$ is  the expectation of an event; $\ID{\boldsymbol \cdot }$  is  the indicator function of a set; $\Ninf{\boldsymbol \cdot }$  is  the
infinity norm of a vector, and   $\delta(\boldsymbol \cdot)$  is the Dirac measure with a unit mass concentrated at a single point, specified by the argument.   For any finite set $\mathcal{X}$,  $\mathcal{E}_n(\mathcal{X})$ denotes  the space of empirical distribution of $n \in \mathbb{N}$ samples form set $\mathcal{X}$,  $\mathcal{P}(\mathcal{X})$ denotes the space of probability measures defined on $\mathcal{X}$, $|\mathcal{X}|$  denotes the cardinality of $\mathcal{X}$, and  $\Conv(A(x), \forall x \in \mathcal{X})$  denotes the convolution   of functions  $A(x)$ over  all $x \in \mathcal{X}$.  The shorthand notation $\Binopdf( n, p)$ denotes the binomial probability density function with $n \in \mathbb{N}$ trails and success probability $p \in [0,1]$. Also, the short-hand  notation $x_{1:t}$ is used to denote the set $\{x_1,\ldots,x_t\}$. Given a set of  $n \in \mathbb{N}$  components, the superscript $-i$ is used to represent   all  components except for the  $i$-th one, $i \in \mathbb{N}_n$.

\subsection{Model }
Consider a stochastic dynamic  game with $n \in \mathbb{N}$ homogeneous players.  Denote by $x^i_t \in \mathcal{X}$ and $u^i_t \in \mathcal{U}$, respectively,   the state and action of player $i \in \mathbb{N}_n$ at time $t \in \mathbb{N}$.  Denote by $\mathfrak{D}_t \in \mathcal{E}_n(\mathcal{X} \times \mathcal{U})$   the empirical distribution of states and actions of players  at time $t$ and  by $d_t \in \Mspace$ the empirical distribution of states of players,  where  for any $x \in \mathcal{X}, u \in \mathcal{U}$ and $t \in \mathbb{N}$:
\begin{align}\label{eq:def-MF-n}
\mathfrak{D}_t(x,u)&= \frac{1}{n} \sum_{i=1}^n \ID{x^i_t=x} \ID{u^i_t=u},\, \nonumber  \\
d_t(x)&= \frac{1}{n} \sum_{i=1}^n \ID{x^i_t=x}.
\end{align}
The dynamics of the state of  player $i \in \mathbb{N}_n$   at time $t \in \mathbb{N}$ is influenced by  other players through the aggregate behavior $d_t$ as follows:
\begin{equation}\label{eq:dynamics-general}
x^i_{t+1}=f_t(x^i_t,u^i_t,d_t,w^i_t),
\end{equation}
where $w^i_t \in \mathcal{W}$ is  the local noise of player $i$ at time $t$. Each player $i \in \mathbb{N}_n$ selects action $u^i_t \in \mathcal{U}$  according to the probability distribution $\gamma^i_t(x^i_t)$, where $\gamma^i_t:\mathcal{X} \rightarrow \mathcal{P}(\mathcal{U})$. For the special case of pure strategies, $\gamma^i_t(x^i_t)=\delta(u^i_t)$.  Alternatively, the dynamics~\eqref{eq:dynamics-general} can be expressed  in terms of  transition probability matrix as:
\begin{align}\label{eq:dynamics-prob}
&\mathcal{T}_t(x^i_{t+1},x^i_{t}, \gamma^i_t(x^i_t),d_t):=\Prob{x^i_{t+1}\mid x^i_t,\gamma^i_t(x^i_t),d_t}\\
&= \sum_{u \in \mathcal{U}} \Prob{x^i_{t+1}\mid x^i_t,u,d_t} \gamma^i_t(x^i_t)(u) \\
&=\sum_{u \in \mathcal{U}}\sum_{w \in \mathcal{W}} \ID{x^i_{t+1}=f_t(x^i_t,u,d_t, w)}\Prob{w^i_t=w} \gamma^i_t(x^i_t)(u).
\end{align} 
In the sequel, the two equivalent representations~\eqref{eq:dynamics-general} and~\eqref{eq:dynamics-prob} are occasionally interchanged for  ease of display.  Let $\mathbf x_t:=\{x_t^1,\ldots,x^n_t\}$, $\mathbf u_t:=\{u_t^1,\ldots,u^n_t\}$ and  $\mathbf w_t:=\{w_t^1,\ldots,w^n_t\}$,  $t \in \mathbb{N}$.  It is assumed that the primitive random variables $\{\mathbf x_1, \mathbf w_1,\ldots,\mathbf w_T\}$ are defined on a common probability space and are mutually independent. In addition,  $\mathcal{X}$,  $\mathcal{U}$ and $ \mathcal{W}$ are   finite sets  in  the Euclidean space. Furthermore, the initial state $x^1_1,\ldots,x^n_1$ are i.i.d. random variables with probability mass function $P_X$, and the local  noises $w^1_t,\ldots,w^n_t$ are i.i.d. random variables with probability mass function $P_{W_t}$, for any $ t \in \mathbb{N}_T$.

\subsection{Admissible strategies}
In the sequel,  we  refer to  the empirical distribution of states as \emph{deep state}, which is inspired by  the fact that  its transition probability matrix  resembles a convolutional neural network~\cite{Jalal2019MFT}. We consider two non-classical  information structures:    deep state sharing  information structure (DSS)  and  no-sharing information structure (NS).  In the DSS, every player  has access to  its local state and  the history of the deep state at  any time $t$, i.e., action $u^i_t \in \mathcal{U}$ is selected according to the following probability distribution:
\begin{equation}\label{eq:info-DSS}
u^i_t \sim \GDSS^{i}_t(x^i_t,d_{1:t}), \quad i \in \mathbb{N}_n, t \in \mathbb{N}, 
\end{equation}
where $\GDSS^i_t: \mathcal{X} \times (\Mspace)^t \rightarrow \mathcal{P}(\mathcal{U})$ is the control law at time~$t$. In practice, there are various applications
in which DSS is plausible. For example,  in the stock markets the   players (i.e. buyers, sellers and brokers) are often provided with statistical data on the total share value with some statistics on trades and exchanges. Also,  in a smart grid, an independent service operator may collect and broadcast the aggregate demand  in the grid. It is also possible, under certain conditions,  to obtain the deep state without a central authority using a suitable consensus algorithm. For instance, under some connectivity conditions,  robots in a swarm can obtain the deep state in a distributed manner by interacting with
their neighbours.   In the NS, on the other hand,  every  player  has access only to   its local state at any time instant, i.e., 
\begin{equation}\label{eq:info-ns}
u^i_t \sim \GNS^i_t(x^i_t), \quad i \in \mathbb{N}_n, t \in \mathbb{N},
\end{equation}
where $\GNS^i_t:\mathcal{X} \rightarrow \mathcal{P}(\mathcal{U})$.  When the number of players is very large, NS information structure  is more  practical as it requires no communication between players after the initial time.
%\begin{remark}
%\emph{It is to be noted that  both  DSS and NS information structures  preserve the privacy of players by not sharing  the local states of players with others.}
%\end{remark}

\begin{Definition}[\textbf{Index-invariant (fair) strategy}]\label{def1}
For any $i \in \mathbb{N}_n$ and $t \in \mathbb{N}_T$,  let $I^i_t$ denote the information available to player $i $ by  time $t$. The strategies of players $i$ and $j$ are said to be index-invariant if  
 $ g^i_t(I^i_t)=g^j_t(\sigma_{i,j} I^j_t)$, $i,j \in \mathbb{N}_n$, $t \in \mathbb{N}_T$, where  $g^i_t$ is a generic control law of player $i$ at time $t$ and the operator $\sigma_{i,j}$  swaps information pertaining to players $i$ and~$j$. 
\end{Definition}
 
Let $\mathbf \GDSS^i=\{\GDSS^i_1,\ldots,\GDSS^i_T\}$ denote the  strategy of player $i \in \mathbb{N}_n$   over the control horizon $T \in \mathbb{N}$.   For any $i \in \mathbb{N}_n$ and $t \in \mathbb{N}_T$,  let  also $c_t: \mathcal{X} \times \mathcal{U} \times \mathcal{E}_n(\mathcal{X} \times \mathcal{U}) \rightarrow \mathbb{R}_{\geq 0}$ denote the per-step cost of player $i $ at time $t $ consisting of non-cooperative and cooperative  costs:
\begin{equation}
c_t(x^i_t,u^i_t,\mathfrak{D}_t)= c_{\text{non-cooperative}}+c_{\text{cooperative}},
\end{equation}
where  
\begin{align}
c_{\text{non-cooperative}}&:=\breve c_t(x^i_t,u^i_t,d_t), \\
c_{\text{cooperative}}&:=\frac{1}{n}\sum_{i=1}^n \bar c_t(x^i_t,u^i_t,d_t), 
\end{align}
such that $\breve c_t, \bar c_t: \mathcal{X} \times \mathcal{U} \times \Mspace \rightarrow \mathbb{R}_{\geq 0}$.
% Note that  social cost function is a special case of the above cost function, i.e.,
%\begin{equation}
%\bar c_t(\mathfrak{D}_t):= \frac{1}{n} \sum_{i=1}^n c_t(x^i_t,u^i_t,\mathfrak{D}_t).
%\end{equation}
 Define the following  total expected cost for player $i \in \mathbb{N}_n$:
\begin{equation}\label{eq:cost_player}
{J^i_n(\mathbf g^i,\mathbf g^{-i})}_{t_0}= \mathbb{E} [\sum_{t=t_0}^T c_t(x^i_t,u^i_t,\mathfrak{D}_t)],\quad  t_0 \in \mathbb{N}_T,
\end{equation}
where the above expectation  is taken with respect to the probability measures induced by  the choice of  players' strategies.

\begin{Definition}[\textbf{Deep Nash Equilibrium (DNE)}]\label{def:DSS}
Any strategy $\{\mathbf \GDSS^{\ast},\ldots,\mathbf \GDSS^{\ast} \}$ under DSS information structure is said to be a deep Nash equilibrium if for any player $i \in \mathbb{N}_n$ at any stage of the game $t_0\in \mathbb{N}_T$:
\begin{equation}
{J^i_n(\mathbf \GDSS^{\ast},\mathbf \GDSS^{\ast})}_{t_0} \leq {J^i_n(\mathbf \GDSS^{ i},\mathbf \GDSS^{\ast})}_{t_0},
\end{equation}
where $\mathbf g^i$ is any arbitrary  DSS strategy.
\end{Definition}
  It is worth highlighting that the solution concept of deep teams~\cite{Jalal2019MFT} is different from Nash equilibrium, in general. 
  
\begin{Definition}[\textbf{Sequential Mean-Field Equilibrium (SMFE)}]\label{def:pDSS}
Any  strategy $\{\mathbf {\GNS},\ldots,\mathbf {\GNS} \}$ under NS information structure  is said to be a sequential mean-field equilibrium  if  for any player  $i \in \mathbb{N}$ at any stage of the game $t_0 \in \mathbb{N}_T$:
\begin{itemize}
\item It is an infinite-population equilibrium:
\begin{equation}
{J^i_\infty(\mathbf \GDSS,\mathbf \GDSS)}_{t_0} \leq {J^i_\infty(\hat{\mathbf g}^i,\mathbf \GDSS)}_{t_0}, \quad \forall \hat{\mathbf g}^i.
\end{equation}
\item Its performance  converges to that of  a deep Nash equilibrium asymptotically:
\begin{equation}
| {J^i_n(\mathbf g,\mathbf g)}_{t_0} - {J^i_n(\mathbf {\GNS}^{\ast},\mathbf {\GNS}^{\ast})}_{t_0}| \leq  \varepsilon(n)
\end{equation}
 where $\varepsilon(n)$ is a sequence converging to zero as $n \rightarrow \infty$. 
 \end{itemize}
\end{Definition}

\begin{remark}
\emph{Note that deep Nash equilibrium in Definition~\ref{def:DSS} is a sequential equilibrium  for any arbitrary number of players (not necessarily large). On the other hand, sequential mean-field equilibrium  in Definition~\ref{def:pDSS}   is only meaningful for the case   when the number of players is  very  large. In general,  sequential mean-field equilibrium is different from the conventional  forward-backward mean-field equilibrium (FBMFE), which is a non-sequential equilibrium.  In particular, }
\end{remark}

\begin{enumerate}
\item SMFE is a sequential equilibrium  in the sense that it takes into account both on- and off-equilibrium-path events. In contrary,   FBMFE  is not a sequential one (at the mean-field level); however, it offers a  rather different setting  wherein, for example,  the cost can be non-Markovian  with respect to the  mean-field~\cite{kizilkale2019integral}.  Since the computational complexity of SMFE and FBMFE are different, in general,  they  often  have diverse applications. For instance,  SMFE is more desirable for long-horizon games and  reinforcement learning problems (as the complexity of the solution in the forward-backward solution increases exponentially with the horizon)   while FBMFE   is more suitable for games with large state spaces (as the complexity of computing the SMFE increases  exponentially with respect to the cardinality of the state space).

\item SMFE  is not necessarily an $\varepsilon(n)$-Nash equilibrium, i.e., 
\begin{equation}
{J^i_n(\mathbf \GDSS,\mathbf \GDSS)}_{t_0=1} \nleq {J^i_n(\hat{\mathbf g}^i,\mathbf \GDSS)}_{t_0=1} +  \varepsilon(n), \quad \forall \hat{\mathbf g}^i.
\end{equation}
This is because of the fact that an infinite-population solution is not necessarily the limit of the finite-population solution; see a counterexample in~\cite{doncel2016meanb} that shows the infinite-population  game may admit many meaningful equilibria (that do  not coincide with FBMFE).  In general, for the SMFE to be an $\varepsilon(n)$-Markov-Nash equilibrium,    not only the model but also  the solution must be continuous  with respect to  mean-field.  On the other hand,   it is often difficult to verify conditions imposed on the solution because the solution is unknown a priori; however,  for some special cases such as linear quadratic models~\cite{Jalal2019Automatica},  the continuity of the solution  is shown to be  without loss of optimality.

% For example, consider  a one-shot game consisting of  $n$ identical players  with   binary states $x^i \in \{0,1\}$, $i \in \mathbb{N}_n$,  where  their average is  denoted by $\bar x=\frac{1}{n} \sum_{i=1}^n x^i$. Let the cost of player $i$ be given by: $\frac{1}{n} x^i +  \bar x$. In such a case,  when $n=\infty$, any  arbitrary strategy of player~$i$ is a Nash solution because the best-response equation does not include $x^i$. However,  for any finite population game, $x^i$ is not negligible in the best-response equation, i.e., it means that the finite-population Nash equilibrium cannot be arbitrary. 

\end{enumerate}

\begin{Problem}\label{prob:DSS}
Find a deep Nash equilibrium (DNE) under DSS information structure.
\end{Problem}

\begin{Problem}\label{prob:pDSS}
Find a sequential mean-field equilibrium (SMFE) under NS information structure.
\end{Problem}
%The solution of Problem~\ref{prob:DSS}  belongs to the class of  Markov Nash solutions; however,   for the ease of reference, we  refer to it  simply as a Nash solution. Similarly, we refer to  the solution of Problem~\ref{prob:pDSS} as an $\varepsilon(n)$-Nash solution.

\subsection{Main  contributions}
% In general,  it is difficult  to solve Problems~\ref{prob:DSS} and~\ref{prob:pDSS}   because   the DSS and NS are non-classical information structures,  wherein  different players have different information sets. This discrepancy in  information  makes it  difficult  for players to   reach an equilibrium (agreement). 
%  In addition,  the computational complexity of  finding the solution (if it exists)  grows exponentially with the number of players and control horizon (curse of dimensionality).  

The main contributions of this paper are outlined below.
\begin{enumerate}

\item We  present some fundamental properties of   finite-population games with an  arbitrary number of homogeneous players (not necessarily large), wherein players wish to find an index-invariant (fair) equilibrium. In particular, the structure of the transition probability matrix of the deep-state of the other players  is obtained in~Theorem~\ref{thm:dynamics_iid} in terms of the convolution function  of some binomial probability distributions, which proves to be useful for computational purposes. 

\item We study the cooperative and non-cooperative games   in a unified framework for both  finite and infinite horizon cases   with  finite and infinite  number of players. In particular, we identify a sequential equilibrium under DSS  information structure in Theorem~\ref{thm:fair} and an approximate sequential equilibrium under NS information structure in Theorem~\ref{thm:finite-ns-convergence}. The proposed results   can be extended  to asymmetric cost functions according to  Remark~\ref{remark:arbitrarily-coupled}.

\item We develop  two Bellman equations for the infinite-horizon discounted  cost functions  in Theorems~\ref{thm:inf-DSS} and~\ref{thm:inf-pDSS}. It is to be noted that the approximate  equilibrium proposed in Theorem~\ref{thm:inf-pDSS} is not stationary because  the belief system varies with time, implying that  the standard   stationary assumptions in~\cite{tembine2009mean,adlakha2015equilibria} are rather restrictive. 
% In addition,  we introduce a sufficient condition (Assumption~\ref{assump:infinite_horizon}) under which  the proposed approximate equilibrium is stable. For the special case of decoupled dynamics, we prove in Corollary~\ref{cor:decoupled} that  the proposed approximate equilibrium is always stable; hence, no additional condition is required.

\item We establish the rate of convergence for both finite- and infinite-horizon cost functions as the number of players  goes to infinity,  without  imposing any continuity assumption on the solution,  where  the optimality gap is  defined as the distance  to the deep Nash equilibrium   rather than the  infinite-population equilibrium.  In addition, a quantization scheme is proposed whose cost of computation converges to zero as the quantization level increases (Remark~\ref{remark-difference}). Note that the  proposed approximate equilibrium  does not necessarily converge to  the conventional mean-field equilibrium  since  it  is not necessarily continuous with respect to  the mean-field. 

 %Note that finite- and infinite-population solutions can be fundamentally different as  demonstrated  by the counterexample in~\cite{doncel2016meanb} 

\item Since  our proposed solutions are not future-dependent,  their extension to multiple sub-populations  is rather straightforward compared to  multiple sub-population mean-field games. For example,   it is demonstrated in~\cite{Nourian2013MM} that the extension of mean-field game approach to major-minor case is conceptually difficult because  the  trajectory of  the mean-field of minor players becomes  stochastic  due to the randomness of a non-negligible (major) player, implying that the future is unpredictable. Such complexity, however, does not arise  in our solutions as  they are  independent of  the future decisions. For more details on the extension to multiple sub-populations, the reader is referred to a similar argument presented in   deep teams~\cite{Jalal2019MFT}.
\end{enumerate}

\section{Dynamics of finite-population game}\label{sec:pre}

\begin{Lemma}\label{lemma:fair}
Suppose  that players $i$ and $j$, $i,j  \in \mathbb{N}_n$, use an index-invariant strategy.  Then,  $g^i_t=g^j_t$, $ t \in \mathbb{N}_T$, under   DSS  and NS information structures.
\end{Lemma}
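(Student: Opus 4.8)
The plan is to show that the defining identity of an index-invariant strategy, namely $g^i_t(I^i_t)=g^j_t(\sigma_{i,j}I^j_t)$, already forces $g^i_t$ and $g^j_t$ to coincide, because the transposition operator $\sigma_{i,j}$ carries the information of player $j$ onto exactly the information of player $i$. First I would make the information sets explicit in each case: under DSS the argument of the control law is the pair $(x^i_t,d_{1:t})$ (see~\eqref{eq:info-DSS}), while under NS it is the local state $x^i_t$ (see~\eqref{eq:info-ns}); in both cases one may equivalently view $I^i_t$ as a functional of the underlying realization $(\mathbf x_{1:t},\mathbf u_{1:t-1})$, and $\sigma_{i,j}$ acts on that realization by transposing its $i$-th and $j$-th components. (If one instead keeps the full private history $x^i_{1:t}$ in $I^i_t$ under NS, the argument below is unchanged, since $\sigma_{i,j}$ still maps $j$'s private data to $i$'s private data.)

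The key observation is the permutation-symmetry of the deep state: by~\eqref{eq:def-MF-n}, $d_s$ is a symmetric function of $\mathbf x_s$ for every $s$, hence the history $d_{1:t}$ is invariant under $\sigma_{i,j}$. Consequently, evaluating player $j$'s information functional at the swapped realization gives $\sigma_{i,j}I^j_t=(x^i_t,d_{1:t})$ under DSS --- the own-state slot now carries $x^i_t$ and the deep-state history is untouched --- and $\sigma_{i,j}I^j_t=x^i_t$ under NS. In either information structure this is precisely $I^i_t$, so $\sigma_{i,j}I^j_t=I^i_t$.

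Substituting this into the index-invariance identity yields $g^i_t(I^i_t)=g^j_t(I^i_t)$ for every realization, i.e. $g^i_t$ and $g^j_t$ agree on every argument attained with positive probability; on the remaining, unreachable arguments one may take the two laws to be equal without affecting any realized trajectory or cost, so $g^i_t=g^j_t$ as maps on $\mathcal{X}\times(\mathcal{E}_n(\mathcal{X}))^t$ (resp. on $\mathcal{X}$). Since $i,j\in\mathbb{N}_n$ and $t\in\mathbb{N}_T$ were arbitrary, this proves the claim under both DSS and NS. The argument is essentially bookkeeping; the only point requiring a little care is the formal specification of how $\sigma_{i,j}$ acts on the information functionals together with the caveat that the pointwise identity is only guaranteed on realizable arguments, while the substantive ingredient --- the invariance of the empirical distribution under coordinate permutations --- is immediate from~\eqref{eq:def-MF-n}.
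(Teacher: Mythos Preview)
Your proof is correct and follows essentially the same approach as the paper's: the paper simply notes that the claim is immediate from Definition~\ref{def1} together with the information-structure equations~\eqref{eq:info-DSS} and~\eqref{eq:info-ns}, and your argument is precisely a careful unpacking of that one-line justification, making explicit the permutation-invariance of $d_{1:t}$ from~\eqref{eq:def-MF-n}.
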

\begin{proof}
The proof follows immediately  from   Definition~\ref{def1} and equations~\eqref{eq:info-DSS} and \eqref{eq:info-ns}. 
\end{proof}

Denote by $\mathcal{G}$  the space of all functions with   domain $\mathcal{X}$ and range $\mathcal{P}(\mathcal{U})$.  Suppose that  all players except  player $i \in \mathbb{N}_n$   use an index-invariant DSS strategy. Then, one   can decompose  the  strategy  into  local and global control laws  as follows:
\begin{equation}\label{eq:def-gamma-DSS}
\psi^{-i}_t(d_{1:t}):= \GDSS_t^{-i}(\boldsymbol \cdot, d_{1:t}, g^{-i}_{1:t-1},g^i_{1:t-1}), \hspace{.2cm}
\gamma^{-i}_t:= \psi^{-i}_t(d_{1:t}),
\end{equation}
for any $i \in \mathbb{N}_n$ and $t \in \mathbb{N}_T$.  Similarly, for  NS information structure one has:
\begin{equation}\label{eq:def-gamma-nfs}
\psi^{-i}_t:= \GNS_t^{-i}(\boldsymbol \cdot, g^{-i}_{1:t-1},g^i_{1:t-1}),\quad 
\gamma^{-i}_t:= \psi^{-i}_t(\boldsymbol \cdot ).
\end{equation}
The dependence of global laws $\psi^{-i}_t$  on  the past strategies of players  is explicitly displayed in equations~\eqref{eq:def-gamma-DSS} and~\eqref{eq:def-gamma-nfs}. It is to be noted that although the local control laws $\gamma^{-i}_t:  \mathcal{X}\rightarrow \mathcal{P}(\mathcal{U})$ under DSS and NS are different, the set  $\mathcal{G}$ is identical for both information structures, i.e.,
\begin{equation}\label{eq:action_random_p}
 \Prob{u^j_t \mid x^j_t}=\gamma^{-i}_t(x^j_t),\quad \forall j \in \mathbb{N}_n, j \neq i. 
\end{equation}
%Denote by $\gamma^{-i}_t: \mathcal{X} \rightarrow \mathcal{U}$ the mapping from local state to local action of all players except player $i$ such that:
%\begin{equation}
%u^j_t=\gamma^{-i}_t(x^j_t) \quad  \text{and}\quad
%\Prob{\gamma^{-i}_t(x^j_t) \mid x^j_t}= \Gamma^{-i}_t(x^j_t).
%\end{equation}

 Now, on noting that  $\mathfrak{D}^{-i}_t$ and $d^{-i}_t$  are the empirical distributions of  all players except  player $i $ at time $t $, i.e., for any $x \in \mathcal{X}$ and $u \in \mathcal{U}$:
\begin{align}\label{eq:def-MF-n-1}
\mathfrak{D}^{-i}_t(x,u)&=\frac{1}{n-1}\sum_{j \neq i \in \mathbb{N}_n} \ID{x^j_t=x} \ID{u^j_t=u},\nonumber\\
d^{-i}_t(x)&=\frac{1}{n-1}\sum_{j \neq i \in \mathbb{N}_n} \ID{x^j_t=x},
\end{align}
%Note that the number of players is more than one; hence, the denominator of  equation~\eqref{eq:def-MF-n-1} is non-zero.  
 it  follows  from~\eqref{eq:def-MF-n} and~\eqref{eq:def-MF-n-1} that for any $i \in \mathbb{N}_n$ and $t \in \mathbb{N}_T$:
\begin{align}
\frac{n-1}{n}\mathfrak{D}^{-i}_t+\frac{1}{n} \delta(x^i_t,u^i_t)&= \mathfrak{D}_t, \nonumber \\
\frac{n-1}{n}d^{-i}_t+\frac{1}{n} \delta(x^i_t)&= d_t.
\end{align}
Given any state $x^i_t  \in \mathcal{X}$ of player $i \in \mathbb{N}_n$ at time $t \in \mathbb{N}$, define functions $Q_{t|x^i_t}$ and $F_{t \mid x^i_t}$ for any $y,x \in \mathcal{X}$, $\tilde d \in \Mspacei$ and $\gamma \in \mathcal{G}$  as follows:
\begin{equation}
\begin{cases}
Q_{t|{x^i_t}}(y,x,\tilde d,\gamma):= \ID{ \tilde d(x)=0} \delta(0) +\ID{\tilde d(x) > 0} \\
\quad \times \Binopdf( (n-1)\tilde d(x), \mathcal{T}_t(y,x,\gamma(x), \frac{n-1}{n}\tilde d +\frac{1}{n}\delta( x^i_t) ) ),\\
F_{t\mid x^i_t}(y,\tilde d,\gamma):=\Conv(Q_{t\mid x^i_t}(y,x,\tilde d,\gamma), \forall x \in \mathcal{X}),
\end{cases}
\end{equation}
where $Q_{t\mid x^i_t}: \mathcal{X}^2 \times \Mspacei \times \mathcal{G}  \rightarrow \mathcal{P}(0,1, \ldots, (n-1)\tilde d(x))$ and  $F_{t\mid x^i_t}: \mathcal{X} \times \Mspacei \times \mathcal{G}  \rightarrow \mathcal{P}(0,1,\ldots, n-1)$.
\begin{Theorem}\label{thm:dynamics_iid}
 If all players except player $i \in \mathbb{N}_n$  use  the local law $\gamma^{-i}_t \in \mathcal{G}$ under DSS or NS information structure at time $t \in \mathbb{N}_T$, then the transition probability matrix of their deep state  for any $y \in \mathcal{X}$ and $k \in \mathbb{N}_{n}$ is given by:
\begin{equation}
\mathbb{P}( d^{-i}_{t+1}(y)=\frac{k-1}{n-1}\mid x^i_t, d^{-i}_{t},  \gamma^{-i}_t)=F_{t|x^i_t}(y,d^{-i}_t,\gamma^{-i}_t)(k).
\end{equation}
In addition,
\begin{multline}\label{eq:expectation_theorem}
\Exp{d^{-i}_{t+1} (y) \mid x^i_t, d^{-i}_t, \gamma^{-i}_t}= \sum_{x \in \mathcal{X}} d^{-i}_t(x)\\
\times \mathcal{T}_t(y, x, \gamma^{-i}_t(x),  \frac{n-1}{n} d^{-i}_t + \frac{1}{n} \delta(x^i_t)).
\end{multline}
\end{Theorem}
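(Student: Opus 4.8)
The plan is to condition on the current state profile and exploit the independence built into the model. By Lemma~\ref{lemma:fair}, index-invariance forces every player $j \neq i$ to employ the common local law $\gamma^{-i}_t \in \mathcal{G}$, under both DSS and NS, so only the counts of players per state matter. I would fix the realizations of $x^i_t$ and $d^{-i}_t$; then $d_t = \frac{n-1}{n} d^{-i}_t + \frac{1}{n}\delta(x^i_t)$ is determined (recall $d_t$ depends on $\mathbf x_t$ only, not on $u^i_t$). Using~\eqref{eq:dynamics-prob} together with the mutual independence of the primitives $\{\mathbf x_1, \mathbf w_1, \ldots, \mathbf w_T\}$ and the independence of the players' randomizations, one gets that, conditioned on the profile $\mathbf x_t$, the next states $\{x^j_{t+1}\}_{j \neq i}$ are mutually independent, with $x^j_{t+1}$ distributed according to $\mathcal{T}_t(\boldsymbol\cdot, x^j_t, \gamma^{-i}_t(x^j_t), d_t)$.

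Next I would group the $n-1$ players by their current state. For $x \in \mathcal{X}$ and the fixed target $y$, let $N_{x\to y}$ be the number of players $j \neq i$ with $x^j_t = x$ and $x^j_{t+1} = y$. When $d^{-i}_t(x) > 0$, there are exactly $(n-1)d^{-i}_t(x)$ players in state $x$, each moving to $y$ independently with probability $\mathcal{T}_t(y, x, \gamma^{-i}_t(x), d_t)$, so $N_{x\to y}$ is $\Binopdf((n-1)d^{-i}_t(x), \mathcal{T}_t(y,x,\gamma^{-i}_t(x),d_t))$-distributed, which is precisely $Q_{t|x^i_t}(y, x, d^{-i}_t, \gamma^{-i}_t)$; when $d^{-i}_t(x) = 0$, $N_{x\to y} = 0$ almost surely, matching the $\delta(0)$ branch in the definition of $Q_{t|x^i_t}$. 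Because the families $\{N_{x\to y}\}_{x \in \mathcal{X}}$ involve disjoint sets of players, they are mutually independent given $(x^i_t, d^{-i}_t)$.

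Then I would assemble the count: $(n-1)d^{-i}_{t+1}(y) = \sum_{x \in \mathcal{X}} N_{x\to y}$ is a sum of mutually independent nonnegative-integer-valued random variables, whose law is the convolution of the laws of the summands, i.e. $\Conv(Q_{t|x^i_t}(y,x,d^{-i}_t,\gamma^{-i}_t), \forall x \in \mathcal{X}) = F_{t|x^i_t}(y, d^{-i}_t, \gamma^{-i}_t)$. Translating the event $\{(n-1)d^{-i}_{t+1}(y) = k-1\}$ into the pmf indexing of $\mathcal{P}(0,1,\ldots,n-1)$ — the $k \leftrightarrow k-1$ offset being only the convention that the support starts at $1$ — yields the stated formula. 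Identity~\eqref{eq:expectation_theorem} follows either from the mean of a convolution being the sum of means, or directly: $\Exp{d^{-i}_{t+1}(y) \mid x^i_t, d^{-i}_t, \gamma^{-i}_t} = \frac{1}{n-1}\sum_{x \in \mathcal{X}} \Exp{N_{x\to y}} = \sum_{x \in \mathcal{X}} d^{-i}_t(x)\,\mathcal{T}_t(y,x,\gamma^{-i}_t(x), \frac{n-1}{n}d^{-i}_t + \frac{1}{n}\delta(x^i_t))$, using $\Exp{N_{x\to y}} = (n-1)d^{-i}_t(x)\,\mathcal{T}_t(y,x,\gamma^{-i}_t(x),d_t)$.

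The main obstacle I anticipate is justifying that conditioning on the empirical distribution $d^{-i}_t$, rather than on the full vector $\mathbf x^{-i}_t$, is enough: this rests on the homogeneity of the dynamics and the exchangeability induced by index-invariance, so that the law of the next-step counts depends on the current configuration only through how many players sit in each state. Making this step clean — by first conditioning on $\mathbf x_t$ and then observing that the resulting expression depends on $\mathbf x^{-i}_t$ only through $d^{-i}_t$ — and keeping the degenerate case $d^{-i}_t(x)=0$ and the index shift straight are the only real care-points; the remainder is the elementary fact that sums of independent variables convolve.
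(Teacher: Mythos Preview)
Your proposal is correct and follows essentially the same route as the paper: both arguments group the players $j\neq i$ by their current state, identify each group's contribution to $(n-1)d^{-i}_{t+1}(y)$ as a binomial with $(n-1)d^{-i}_t(x)$ trials and success probability $\mathcal{T}_t(y,x,\gamma^{-i}_t(x),d_t)$, and then convolve across $x\in\mathcal{X}$ to get $F_{t|x^i_t}$. Your treatment is slightly more explicit about the conditioning step (passing from $\mathbf x^{-i}_t$ to $d^{-i}_t$ via homogeneity) and the degenerate case $d^{-i}_t(x)=0$, but the substance is the same.
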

\begin{proof}
It follows from~\eqref{eq:def-MF-n-1} that for every $y \in \mathcal{X}$,
\begin{align}\label{eq:mean-field_i_proof}
& \hspace{-.5cm}(n-1)d^{-i}_{t+1}(y)=\sum_{j=1,j \neq i}^n \ID{x^j_{t+1}=y} \nonumber \\
&\hspace{-.5cm}=\sum_{x \in \mathcal{X}} \sum_{j=1,j\neq i}^n \ID{x^j_t=x} \ID{f_t(x,\gamma^{-i}_t(x),d_t,w^j_t)=y}.
\end{align}
For any $x \in \mathcal{X}$, the inner summation  in right-hand side of~\eqref{eq:mean-field_i_proof}  consists of $(n-1)$ components, where $(n-1)(1-d^{-i}_t(x))$ components are zero  due to the fact that there are only $(n-1)d^{-i}_t(x)$ components that have state $x$ at time $t$, according to the definition of the empirical distribution $d^{-i}_t(x)$. These $(n-1)$ possibly non-zero components are independent binary random variables  with the following success probability:
\begin{multline}
\mathbb{P}(\ID{f_t(x,\gamma^{-i}_t(x),d_t,w^j_t)=y}=1)=\mathcal{T}_t(y,x,\gamma^{-i}_t(x),d_t).
\end{multline}
Let $F_{t\mid x^i_t}(y,d^{-i}_t,\gamma^{-i}_t)$ denote the probability mass function (PMF) of the sum of these  $(n-1)d^{-i}_t(x)$ Bernoulli random variables,
which is a binomial distribution with $(n-1)d^{-i}_t(x)$ trials and success
probability $\mathcal{T}_t(y,x,\gamma^{-i}_t(x),\frac{n-1}{n}d^{-i}_t+\frac{1}{n}\delta(x^i_t))$. Now, the PMF of $(n-1)d^{-i}_{t+1}(y)$ is the PMF of the outer summation in the right-hand side of equation~\eqref{eq:mean-field_i_proof} that  consists of $|\mathcal{X}|$ independent random variables, each
of which has the PMF $Q_{t\mid x^i_t}(y,x,d^{-i}_t,\gamma^{-i}_t)$.  Therefore, the PMF of $(n-1)d^{-i}_{t+1}(y)$  can be expressed as the convolution of the PMFs $Q_{t\mid x^i_t}(y,x,d^{-i}_t,\gamma^{-i}_t)$ over space $\mathcal{X}$.

In addition, it follows from~\eqref{eq:def-MF-n},~\eqref{eq:dynamics-prob} and~\eqref{eq:def-MF-n-1}  that  for every $y \in \mathcal{X}$ and $t \in \mathbb{N}_T$:
\begin{align}
&\Exp{d^{-i}_{t+1}(y) \mid x^i_t, d^{-i}_t, \gamma^{-i}_t}= \Exp{ \frac{1}{n}\sum_{i=1}^n \ID{x^i_{t+1}=y}\mid x^i_t, d^{-i}_t, \gamma^{-i}_t}
\\
&=\frac{1}{n} \sum_{i=1}^n  \Exp{\ID{x^i_{t+1}=y}\mid x^i_t, d^{-i}_t, \gamma^{-i}_t}=\sum_{x \in \mathcal{X}} d^{-i}_t(x)\\
&\quad \times  \mathcal{T}_t(y, x, \gamma^{-i}_t(x), \frac{n-1}{n} d^{-i}_t + \frac{1}{n} \delta(x^i_t)),
\end{align}
where  the states of all players except that of player $i$ has  identical transition probability.
\end{proof}

\begin{Lemma}\label{lemma:markov-mf}
 If all players except player $i \in \mathbb{N}_n$  use a fair strategy  under  the DSS and NS, the following holds  irrespective of the strategy~$\mathbf g^i$ at any time $t \in \mathbb{N}_T$,
\begin{align}\label{eq:Markov-property}
&\Prob{x^i_{t+1}, d_{t+1} \mid x^i_{1:t},d_{1:t}, \gamma^i_{1:t}, \gamma^{-i}_{1:t}} \nonumber \\
&\hspace{.2cm}=\Prob{x^i_{t+1} \mid x^i_{t}, \gamma^i_{t}(x^i_t), d_t} \Prob{d^{-i}_{t+1} \mid x^i_{t},d^{-i}_{t},  \gamma^{-i}_{t}}.
\end{align}
\end{Lemma}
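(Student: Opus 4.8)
The plan is to pass from the joint transition of $(x^i_{t+1},d_{t+1})$ to that of $(x^i_{t+1},d^{-i}_{t+1})$, factor the latter by the chain rule after establishing a conditional independence, and then identify the two factors from the dynamics~\eqref{eq:dynamics-prob} and Theorem~\ref{thm:dynamics_iid}. First, since $d_s=\tfrac{n-1}{n}d^{-i}_s+\tfrac1n\delta(x^i_s)$ for every $s$, conditioning on $(x^i_{1:t},d_{1:t})$ is the same as conditioning on $(x^i_{1:t},d^{-i}_{1:t})$, and $(x^i_{t+1},d_{t+1})$ carries the same information as $(x^i_{t+1},d^{-i}_{t+1})$. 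Hence, writing $\mathcal I:=(x^i_{1:t},d^{-i}_{1:t},\gamma^i_{1:t},\gamma^{-i}_{1:t})$, it suffices to prove
\[
\Prob{x^i_{t+1},d^{-i}_{t+1}\mid\mathcal I}=\Prob{x^i_{t+1}\mid x^i_t,\gamma^i_t(x^i_t),d_t}\,\Prob{d^{-i}_{t+1}\mid x^i_t,d^{-i}_t,\gamma^{-i}_t}.
\]

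The next step is to show that $x^i_{t+1}$ and $d^{-i}_{t+1}$ are conditionally independent given $\mathcal I$. By~\eqref{eq:dynamics-general}, $x^i_{t+1}=f_t(x^i_t,u^i_t,d_t,w^i_t)$ is a function of the $\mathcal I$-measurable quantities $x^i_t,d_t$ and of the pair $(u^i_t,w^i_t)$; by~\eqref{eq:def-MF-n-1}, $d^{-i}_{t+1}$ is a function of $d_t$ and of the triple $(\mathbf x^{-i}_t,\mathbf u^{-i}_t,\mathbf w^{-i}_t)$. Conditioned on $\mathcal I$, the pair $(u^i_t,w^i_t)$ is independent of that triple: the noises $w^1_t,\dots,w^n_t$ are i.i.d.\ and independent of the past by the model assumptions, and the action randomizations are drawn independently across players, so in particular $u^i_t\sim\gamma^i_t(x^i_t)$ is independent of $\mathbf x^{-i}_t$ (given $x^i_t$), of $\mathbf u^{-i}_t$, and of all the noises. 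Hence $\Prob{x^i_{t+1},d^{-i}_{t+1}\mid\mathcal I}=\Prob{x^i_{t+1}\mid\mathcal I}\,\Prob{d^{-i}_{t+1}\mid\mathcal I}$. For the first factor, $u^i_t$ depends on $\mathcal I$ only through $x^i_t$ and $\gamma^i_t$, $w^i_t$ is independent of $\mathcal I$ with law $P_{W_t}$, and $d_t$ is $\mathcal I$-measurable, so $\Prob{x^i_{t+1}\mid\mathcal I}=\mathcal T_t(x^i_{t+1},x^i_t,\gamma^i_t(x^i_t),d_t)=\Prob{x^i_{t+1}\mid x^i_t,\gamma^i_t(x^i_t),d_t}$ by~\eqref{eq:dynamics-prob}; this uses nothing about $\mathbf g^i$ beyond the fact that $u^i_t$ is generated by $\gamma^i_t(x^i_t)$, which is exactly the sense in which the claim holds irrespective of $\mathbf g^i$.

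For the second factor I would invoke Theorem~\ref{thm:dynamics_iid}, which applies because, by the fairness hypothesis and Lemma~\ref{lemma:fair}, all players $j\neq i$ use the common local law $\gamma^{-i}_t$. The only delicate point is that $\mathcal I$ records the empirical distributions $d^{-i}_{1:t}$ and not the full configurations $\mathbf x^{-i}_{1:t}$, so one must average the conditional law of $d^{-i}_{t+1}$ over all $\mathbf x^{-i}_{1:t}$ consistent with $d^{-i}_{1:t}$. However, because the players $j\neq i$ are exchangeable under $\gamma^{-i}_t$ and their noises are i.i.d., the conditional law of $d^{-i}_{t+1}$ given any configuration depends on it only through the counts $(n-1)d^{-i}_t(\cdot)$ (and through $x^i_t$, which enters $d_t$, and $\gamma^{-i}_t$) --- this is precisely the convolution-of-binomials formula of Theorem~\ref{thm:dynamics_iid} --- so the average is trivial; and the same formula depends on the past and on the past laws only through the current triple $(x^i_t,d^{-i}_t,\gamma^{-i}_t)$, by the Markov structure of~\eqref{eq:dynamics-general}. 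Thus $\Prob{d^{-i}_{t+1}\mid\mathcal I}=\Prob{d^{-i}_{t+1}\mid x^i_t,d^{-i}_t,\gamma^{-i}_t}$, and combining the two factors and undoing the change of variables of the first paragraph gives~\eqref{eq:Markov-property}.

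I expect this last step to be the main obstacle: one must argue carefully that conditioning on the coarse statistic $d^{-i}_{1:t}$ rather than on the full history $\mathbf x^{-i}_{1:t}$ neither adds nor removes information for the one-step evolution of $d^{-i}$ --- equivalently, that the exchangeability of the players $j\neq i$ makes the empirical distribution a sufficient statistic at every stage, so that Theorem~\ref{thm:dynamics_iid} can be applied verbatim inside the conditional probability.
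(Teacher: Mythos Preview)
Your proposal is correct and follows essentially the same route as the paper: the paper's proof is a one-line pointer to the dynamics~\eqref{eq:dynamics-general},~\eqref{eq:dynamics-prob} and the definition~\eqref{eq:def-MF-n-1} of $d^{-i}_t$, and your argument is a careful unpacking of exactly these ingredients. Your additional invocation of Theorem~\ref{thm:dynamics_iid} to identify the second factor, and your explicit handling of the exchangeability issue (conditioning on $d^{-i}_{1:t}$ rather than $\mathbf x^{-i}_{1:t}$), are natural elaborations that the paper leaves implicit; they do not constitute a different approach, only a more complete one.
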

\begin{proof}
The proof follows from  Lemma~\ref{eq:def-MF-n-1} and equations~\eqref{eq:dynamics-general},~\eqref{eq:dynamics-prob} and \eqref{eq:def-MF-n-1}. 
\end{proof}

\begin{Lemma}\label{lemma:joint_empirical}
Suppose  all players except player $i \in \mathbb{N}_n$ use a local  law $\gamma^{-i}_t \in \mathcal{G}$ at time $t \in \mathbb{N}_T$. Then, the following holds for any $i,k \in \mathbb{N}_n$,  $t \in \mathbb{N}_T$,  $x \in \mathcal{X}$ and $u \in \mathcal{U}$:
\begin{multline}
\Prob{\mathfrak{D}^{-i}_t(x,u)=\frac{k-1}{n-1} \mid x^i_{1:t},d^{-i}_{1:t},\gamma^i_{1:t},\gamma^{-i}_{1:t}}\\=\Binopdf((n-1)d^{-i}_t(x), \gamma^{-i}_t(x)(u))(k),
\end{multline}
and
\begin{equation}\label{eq:joint-lemma}
\Exp{\mathfrak{D}^{-i}_t(x,u)\mid x^i_{1:t},d^{-i}_{1:t},\gamma^i_{1:t},\gamma^{-i}_{1:t}}=d^{-i}_t(x)  \gamma^{-i}_t(x)(u).
\end{equation}
\end{Lemma}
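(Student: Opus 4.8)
\emph{Proof proposal.} The plan is to mirror the argument of Theorem~\ref{thm:dynamics_iid}, but applied to the \emph{actions} at time $t$ rather than to the states at time $t+1$; this makes the situation strictly simpler, since no convolution over $\mathcal{X}$ is needed because a single pair $(x,u)$ is fixed. First I would expand the definition of the joint empirical distribution restricted to the population excluding player~$i$, namely
\[
(n-1)\mathfrak{D}^{-i}_t(x,u)=\sum_{j\neq i\in\mathbb{N}_n}\ID{x^j_t=x}\,\ID{u^j_t=u}.
\]
By the definition of $d^{-i}_t(x)$ in~\eqref{eq:def-MF-n-1}, conditioned on $d^{-i}_{1:t}$ exactly $(n-1)d^{-i}_t(x)$ of the indicators $\ID{x^j_t=x}$ equal one, and the remaining $(n-1)(1-d^{-i}_t(x))$ summands vanish identically. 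Hence $(n-1)\mathfrak{D}^{-i}_t(x,u)$ equals a sum of $(n-1)d^{-i}_t(x)$ terms of the form $\ID{u^j_t=u}$, taken over the sub-population of players $j\neq i$ currently in state $x$.

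Next I would identify the conditional law of each such indicator. By Lemma~\ref{lemma:fair} all players $j\neq i$ use the same index-invariant law, and by~\eqref{eq:action_random_p} player $j$ draws $u^j_t$ from $\gamma^{-i}_t(x^j_t)=\gamma^{-i}_t(x)$, so $\ID{u^j_t=u}$ is a Bernoulli random variable with success probability $\gamma^{-i}_t(x)(u)$. The key point to justify carefully --- and the one I expect to be the main obstacle --- is that these Bernoulli variables are conditionally independent given $\{x^i_{1:t},d^{-i}_{1:t},\gamma^i_{1:t},\gamma^{-i}_{1:t}\}$: the randomizations realizing the mixed actions are independent across players and independent of the primitive variables $\{\mathbf x_1,\mathbf w_{1:t-1}\}$ that generate the conditioning data, and, crucially, the conditioning information stops at $d^{-i}_t$ and $x^i_t$ and therefore does not involve the time-$t$ actions themselves (which only feed forward into $d^{-i}_{t+1}$). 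Once this is in place, a sum of $(n-1)d^{-i}_t(x)$ i.i.d.\ Bernoulli$\big(\gamma^{-i}_t(x)(u)\big)$ variables is by definition $\Binopdf\big((n-1)d^{-i}_t(x),\gamma^{-i}_t(x)(u)\big)$; rewriting the event $\{\mathfrak{D}^{-i}_t(x,u)=(k-1)/(n-1)\}$ as $\{(n-1)\mathfrak{D}^{-i}_t(x,u)=k-1\}$ then yields the first displayed identity.

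Finally, the conditional expectation in~\eqref{eq:joint-lemma} follows either by taking the mean of the binomial just obtained, or more directly by linearity:
\[
\Exp{(n-1)\mathfrak{D}^{-i}_t(x,u)\mid \boldsymbol\cdot}=\sum_{j\neq i}\Exp{\ID{x^j_t=x}\,\ID{u^j_t=u}\mid \boldsymbol\cdot}=(n-1)d^{-i}_t(x)\,\gamma^{-i}_t(x)(u),
\]
where the per-term identity uses that $\ID{x^j_t=x}$ is measurable with respect to the conditioning $\sigma$-field while $\ID{u^j_t=u}$ has conditional mean $\gamma^{-i}_t(x)(u)$ on the event $\{x^j_t=x\}$; dividing by $n-1$ gives the claim. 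I would keep the exchangeability remark explicit, since it is precisely what makes conditioning on the aggregate trajectory $d^{-i}_{1:t}$ (rather than on the full state profile $\mathbf{x}_{1:t}^{-i}$) harmless here.
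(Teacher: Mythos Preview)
Your proposal is correct and follows essentially the same approach as the paper's proof: expand $(n-1)\mathfrak{D}^{-i}_t(x,u)$ via~\eqref{eq:def-MF-n-1}, observe that only the $(n-1)d^{-i}_t(x)$ players currently in state $x$ contribute, identify their action indicators as i.i.d.\ Bernoulli$\big(\gamma^{-i}_t(x)(u)\big)$ variables, and conclude the binomial law and its mean. If anything, your treatment is more careful than the paper's, which states the Bernoulli/binomial structure in a single sentence without explicitly justifying the conditional-independence step you flag.
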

\begin{proof}
The proof  is presented in Appendix~\ref{sec:proof_lemma:joint_empirical}.
\end{proof}

\begin{Lemma}\label{lemma:per_step_cost}
Suppose  all players except player $i \in \mathbb{N}_n$ use a local  law $\gamma^{-i}_t \in \mathcal{G}$ at time $t \in \mathbb{N}_T$. There exists a function $\ell_t: \mathcal{X} \times \Mspace \times \mathcal{P}(\mathcal{U}) \times \mathcal{G}: \rightarrow \mathbb{R}_{\geq 0}$ such that the per-step cost function of player $i$ can be expressed as follows:
\begin{align}
&\Exp{c_t(x^i_t,u^i_t,\mathfrak{D}_t)\mid x^i_{1:t}, d_{1:t},\gamma^{i}_{1:t},\gamma^{-i}_{1:t}}\\
&=\sum_{u,\mathfrak{D}^{-i}} c_t(x^i_t,u, \frac{n-1}{n}\mathfrak{D}^{-i}+ \frac{1}{n}\delta(x^i_t,u))\gamma^{i}_t(x^i_t)(u) \\
&\quad \times \Prob{\mathfrak{D}^{-i}_t=\mathfrak{D}^{-i} \mid d^{-i}_{t},\gamma^{-i}_{t}}
=:\ell_t(x^i_t, d_t, \gamma^i_t(x^i_t),\gamma^{-i}_t).
\end{align}
\end{Lemma}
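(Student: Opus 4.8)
The plan is to decompose the empirical distribution $\mathfrak{D}_t$ into the contribution of player $i$ and that of the remaining players, and then to integrate out the two sources of randomness that survive the conditioning, namely player $i$'s own action $u^i_t$ and the others' empirical distribution $\mathfrak{D}^{-i}_t$. First I would invoke the identity $\mathfrak{D}_t=\tfrac{n-1}{n}\mathfrak{D}^{-i}_t+\tfrac{1}{n}\delta(x^i_t,u^i_t)$ established earlier, so that
\[
c_t(x^i_t,u^i_t,\mathfrak{D}_t)=c_t\left(x^i_t,u^i_t,\tfrac{n-1}{n}\mathfrak{D}^{-i}_t+\tfrac{1}{n}\delta(x^i_t,u^i_t)\right).
\]
Conditioning on $x^i_{1:t},d_{1:t},\gamma^i_{1:t},\gamma^{-i}_{1:t}$ is equivalent to conditioning on $x^i_{1:t},d^{-i}_{1:t},\gamma^i_{1:t},\gamma^{-i}_{1:t}$, since $d_{1:t}$ together with $x^i_{1:t}$ determines $d^{-i}_{1:t}$ through $\tfrac{n-1}{n}d^{-i}_s+\tfrac{1}{n}\delta(x^i_s)=d_s$. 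I would then apply the tower property with additional conditioning on the pair $(u^i_t,\mathfrak{D}^{-i}_t)$, writing the conditional expectation as the double sum over $u\in\mathcal{U}$ and $\mathfrak{D}^{-i}\in\mathcal{E}_{n-1}(\mathcal{X}\times\mathcal{U})$ of $c_t(\cdots)$ weighted by $\Prob{u^i_t=u,\mathfrak{D}^{-i}_t=\mathfrak{D}^{-i}\mid x^i_{1:t},d^{-i}_{1:t},\gamma^i_{1:t},\gamma^{-i}_{1:t}}$.

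The crux is to factor this joint conditional probability. Since $u^i_t$ is drawn from $\gamma^i_t(x^i_t)$ using randomization independent of the primitive variables $\{\mathbf x_1,\mathbf w_1,\ldots,\mathbf w_T\}$ and of the action randomizations of the other players, while $\mathfrak{D}^{-i}_t$ is a measurable function of $\mathbf x^{-i}_{1:t}$ and of those other randomizations, $u^i_t$ and $\mathfrak{D}^{-i}_t$ are conditionally independent given the information of player $i$; hence the joint probability factors as $\gamma^i_t(x^i_t)(u)\cdot\Prob{\mathfrak{D}^{-i}_t=\mathfrak{D}^{-i}\mid x^i_{1:t},d^{-i}_{1:t},\gamma^i_{1:t},\gamma^{-i}_{1:t}}$. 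The first factor is immediate from the definition of player $i$'s strategy. For the second, Lemma~\ref{lemma:joint_empirical} in its joint form across states — within each state $x$ the $(n-1)d^{-i}_t(x)$ players occupying $x$ pick actions i.i.d.\ from $\gamma^{-i}_t(x)$, independently across states — shows that this conditional law depends on the past only through $(d^{-i}_t,\gamma^{-i}_t)$, i.e.\ it equals $\Prob{\mathfrak{D}^{-i}_t=\mathfrak{D}^{-i}\mid d^{-i}_t,\gamma^{-i}_t}$. Substituting the two factors into the double sum produces exactly the displayed expression.

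Finally, I would observe that $d^{-i}_t$ is itself the deterministic function $\tfrac{n}{n-1}\big(d_t-\tfrac{1}{n}\delta(x^i_t)\big)$ of $(x^i_t,d_t)$, so the right-hand side is a bona fide function of $(x^i_t,d_t,\gamma^i_t(x^i_t),\gamma^{-i}_t)$, which I name $\ell_t$; its nonnegativity is inherited from that of $c_t$, and its domain is as stated. The main obstacle is the conditional-independence and sufficiency step: one must verify that the information of player $i$ — which under DSS includes $d_{1:t}$ and is therefore correlated with $\mathbf x^{-i}_{1:t}$ — does not couple $u^i_t$ to $\mathfrak{D}^{-i}_t$, and that the conditional law of $\mathfrak{D}^{-i}_t$ genuinely collapses onto the statistic $(d^{-i}_t,\gamma^{-i}_t)$. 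Both rest on the mutual independence of the primitive random variables and on Lemma~\ref{lemma:joint_empirical}; the rest of the argument is bookkeeping.
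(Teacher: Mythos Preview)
Your proposal is correct and follows essentially the same route as the paper: the paper's proof simply cites Lemma~\ref{lemma:joint_empirical}, the decomposition $\mathfrak{D}_t=\tfrac{n-1}{n}\mathfrak{D}^{-i}_t+\tfrac{1}{n}\delta(x^i_t,u^i_t)$ from~\eqref{eq:def-MF-n-1}, and the fact that $u^i_t\sim\gamma^i_t(x^i_t)$, which is exactly the skeleton you have fleshed out. Your explicit handling of the conditional-independence factorization and the observation that $d^{-i}_t$ is a deterministic function of $(x^i_t,d_t)$ are the right details to add.
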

\begin{proof}
The proof follows from Lemma~\ref{lemma:joint_empirical}, equation~\eqref{eq:def-MF-n-1} and the fact that $u^i_t$ is distributed according to the probability distribution $\gamma^i_t(x^i_t)$.
\end{proof}

In the next lemma, it is shown  that the belief  of every player can be characterized by the deep state of other players.
\begin{Lemma}\label{lemma:conditional_prob}
 Let  all players except player $i \in \mathbb{N}_n$ use the local  law $\gamma^{-i}_t \in \mathcal{G}$ at time $t \in \mathbb{N}_T$.  Then,   under DSS or NS information structure,  irrespective of the  strategies of the  players, the following relation holds:
\begin{equation}\label{eq:conditional_prob_non}
\Prob{\mathbf x^{-i}_t \mid  x^i_{1:t},  d^{-i}_{1:t},\gamma^{i}_{1:t}, \gamma^{-i}_{1:t}}=\Prob{\mathbf x^{-i}_t \mid   d^{-i}_{t}}.
\end{equation}
\end{Lemma}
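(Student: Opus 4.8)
The plan is to prove a slightly stronger statement by induction on $t$: under the stated hypotheses the conditional law of $\mathbf x^{-i}_t$ given $(x^i_{1:t}, d^{-i}_{1:t}, \gamma^i_{1:t}, \gamma^{-i}_{1:t})$ is the \emph{uniform} distribution over the set $S(d^{-i}_t):=\{\mathbf z\in\mathcal{X}^{n-1}: \text{the empirical distribution of } \mathbf z \text{ equals } d^{-i}_t\}$ of labelled profiles consistent with the deep state $d^{-i}_t$. Since the uniform law on $S(d^{-i}_t)$ is manifestly a function of $d^{-i}_t$ alone, the tower property then yields $\Prob{\mathbf x^{-i}_t \mid x^i_{1:t}, d^{-i}_{1:t}, \gamma^i_{1:t}, \gamma^{-i}_{1:t}}=\Prob{\mathbf x^{-i}_t\mid d^{-i}_t}$, which is the assertion of the lemma. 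A preliminary observation I would record first is that, under both DSS and NS, the local laws $\gamma^i_{1:t}$ and $\gamma^{-i}_{1:t}$ are deterministic functions of $(x^i_{1:t}, d^{-i}_{1:t})$ — recall $\gamma^{-i}_s=\psi^{-i}_s(d_{1:s})$ and $d_s=\frac{n-1}{n}d^{-i}_s+\frac1n\delta(x^i_s)$, and likewise for $g^i$ — so they may be dropped from the conditioning throughout.

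For the base case $t=1$, the initial states $x^1_1,\ldots,x^n_1$ are i.i.d.\ with pmf $P_X$ and independent of the local noises, so $\Prob{\mathbf x^{-i}_1=\mathbf z}=\prod_{j\neq i}P_X(z^j)$, which depends on $\mathbf z=(z^j)_{j\neq i}$ only through its empirical distribution and is independent of $x^i_1$; conditioning on $d^{-i}_1$ therefore gives the uniform law on $S(d^{-i}_1)$. For the inductive step, suppose the claim holds at time $t$. Using the dynamics~\eqref{eq:dynamics-general}--\eqref{eq:dynamics-prob} and the mutual independence of the primitive variables (in the same vein as Lemma~\ref{lemma:markov-mf}), given $\mathbf x^{-i}_t=\mathbf z$ and $x^i_t$ the transitions $\{z^j\mapsto x^j_{t+1}\}_{j\neq i}$ and $x^i_t\mapsto x^i_{t+1}$ are mutually conditionally independent, with $\Prob{x^j_{t+1}=y^j\mid \mathbf x^{-i}_t=\mathbf z, x^i_t,\gamma^{-i}_t}=\mathcal{T}_t(y^j, z^j,\gamma^{-i}_t(z^j),\bar d_t)$, where the crucial point is that $\bar d_t:=\frac{n-1}{n}d^{-i}_t+\frac1n\delta(x^i_t)$ is the \emph{same} for every $\mathbf z\in S(d^{-i}_t)$. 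Applying Bayes' rule as a function of $\mathbf y$ and discarding every factor that does not depend on $\mathbf y$ — the $x^i_{t+1}$-transition factor (common, since $\bar d_t$ is common), the normalizing constant, and noting that conditioning on $d^{-i}_{t+1}$ merely restricts attention to $\mathbf y\in S(d^{-i}_{t+1})$ because $d^{-i}_{t+1}$ is a deterministic function of $\mathbf x^{-i}_{t+1}$ — together with the inductive hypothesis that weights all $\mathbf z\in S(d^{-i}_t)$ equally, I obtain, for $\mathbf y\in S(d^{-i}_{t+1})$,
\[
\Prob{\mathbf x^{-i}_{t+1}=\mathbf y\mid x^i_{1:t+1}, d^{-i}_{1:t+1}, \gamma^i_{1:t+1},\gamma^{-i}_{1:t+1}}\ \propto\ \sum_{\mathbf z\in S(d^{-i}_t)}\ \prod_{j\neq i}\mathcal{T}_t(y^j, z^j,\gamma^{-i}_t(z^j),\bar d_t)\ =:\ g(\mathbf y).
\]

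It then remains to check that $g$ is invariant under permutations of the coordinate labels $\{j\neq i\}$: this holds because the summation set $S(d^{-i}_t)$ is itself permutation-invariant while the summand is a product over coordinates, so for a permutation $\sigma$ one changes variables $\mathbf z\mapsto\sigma\mathbf z$ in the sum and relabels the product index to recover $g(\sigma\mathbf y)=g(\mathbf y)$. Since $S(d^{-i}_{t+1})$ is a single orbit of the symmetric group acting on coordinates, $g$ is constant on it, so the conditional law at time $t+1$ is uniform on $S(d^{-i}_{t+1})$, completing the induction. I expect the only genuinely delicate points to be (i) spelling out that $\bar d_t$ is the same for all realizations $\mathbf z\in S(d^{-i}_t)$ — this is exactly what lets the $x^i$-transition factor be pulled out of the Bayes ratio and makes the inner kernels depend on $\mathbf z$ coordinatewise — and (ii) the bookkeeping confirming that conditioning on $x^i_{1:t+1}$ and on the redundant strategy variables contributes only $\mathbf y$-independent factors; the permutation-symmetry step itself is short.
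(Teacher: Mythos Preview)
Your proof is correct and reaches exactly the same conclusion as the paper --- that the conditional law of $\mathbf x^{-i}_t$ is the \emph{uniform} distribution on the type class $S(d^{-i}_t)$, which depends on $d^{-i}_t$ alone --- but you arrive there by a somewhat different route. The paper argues in one shot: because the primitive random variables, dynamics, and local laws of the players $\{j\neq i\}$ are exchangeable, the conditional distribution satisfies $\Prob{\sigma_{j,k}\mathbf x^{-i}_t\mid\cdot}=\Prob{\mathbf x^{-i}_t\mid\cdot}$ for every transposition, and an exchangeable law supported on a single orbit of the symmetric group must be uniform; the value $1/|S(d^{-i}_t)|$ is then manifestly a function of $d^{-i}_t$ only. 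You instead establish the uniformity by induction on $t$, carrying the Bayes update explicitly and verifying permutation invariance of the resulting weight $g(\mathbf y)$ at each step. Your approach is more constructive and makes transparent the two mechanisms that drive the result --- that $\bar d_t$ is constant across $S(d^{-i}_t)$ and that the transition kernel factorizes coordinatewise --- whereas the paper's global exchangeability argument is shorter and bypasses any time recursion. Either argument is fine; yours is arguably the more careful of the two.
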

\begin{proof}
The proof is presented in Appendix~\ref{sec:proof_lemma:conditional_prob}.
\end{proof}
 The following lemma is a consequence of the above result.
\begin{Lemma}\label{lemma:cost}
If  all players except player $i \in \mathbb{N}_n$ use  the local law $\gamma^{-i}_t \in \mathcal{G}$ at time $t$, then there exists a function $\ell_t: \mathcal{X}  \times \Mspace\times  \mathcal{P}(\mathcal{U}) \times \mathcal{G}\rightarrow \mathbb{R}_{\geq 0}$ such that for any arbitrarily-coupled cost function $c_t(x^i_t,u^i_t, \mathbf{x}_t, \mathbf u_t): \mathcal{X} \times \mathcal{U} \times \mathcal{X}^n \times \mathcal{U}^n \rightarrow \mathbb{R}_{\geq 0}$,
\begin{equation}
\Exp{c_t(x^i_t,u^i_t, \mathbf{x}_t, \mathbf u_t) \mid  x^i_{1:t}, d_{1:t}, \gamma^i_{1:t},  \gamma^{-i}_{1:t}}=:\ell_t(x^i_t, d_t,\gamma^i_t(x^i_t),\gamma^{-i}_t).
\end{equation}
\end{Lemma}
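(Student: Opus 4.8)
\emph{Proof idea.} The plan is to imitate the proof of Lemma~\ref{lemma:per_step_cost}, the only change being that we invoke Lemma~\ref{lemma:conditional_prob} (the full conditional law of the state profile $\mathbf x^{-i}_t$) in place of Lemma~\ref{lemma:joint_empirical} (which only gives the conditional law of the empirical distribution $\mathfrak{D}^{-i}_t$); this is exactly what an arbitrarily-coupled cost forces on us, since $c_t$ now depends on the raw profile $(\mathbf x_t,\mathbf u_t)$ rather than on $\mathfrak{D}_t$.

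Write $\mathcal{I}^i_t:=\{x^i_{1:t},d^{-i}_{1:t},\gamma^i_{1:t},\gamma^{-i}_{1:t}\}$; since $d_t=\frac{n-1}{n}d^{-i}_t+\frac1n\delta(x^i_t)$, this carries the same information as $\{x^i_{1:t},d_{1:t},\gamma^i_{1:t},\gamma^{-i}_{1:t}\}$, so it suffices to produce a function of $(x^i_t,d^{-i}_t,\gamma^i_t(x^i_t),\gamma^{-i}_t)$. First I would enlarge the conditioning by $\mathbf x^{-i}_t$ and use the tower property,
\[
\Exp{c_t(x^i_t,u^i_t,\mathbf x_t,\mathbf u_t)\mid \mathcal{I}^i_t}=\Exp{\,\Exp{c_t(x^i_t,u^i_t,\mathbf x_t,\mathbf u_t)\mid \mathcal{I}^i_t,\mathbf x^{-i}_t}\mid \mathcal{I}^i_t}.
\]
For the inner expectation, given $\mathbf x_t=(x^i_t,\mathbf x^{-i}_t)$ the action $u^i_t$ is drawn from $\gamma^i_t(x^i_t)$, each $u^j_t$ ($j\neq i$) from $\gamma^{-i}_t(x^j_t)$, and all of them are mutually independent; hence it equals
\[
\sum_{\mathbf u_t}c_t(x^i_t,u^i_t,\mathbf x_t,\mathbf u_t)\,\gamma^i_t(x^i_t)(u^i_t)\prod_{j\neq i}\gamma^{-i}_t(x^j_t)(u^j_t)=:h_t(x^i_t,\mathbf x^{-i}_t,\gamma^i_t(x^i_t),\gamma^{-i}_t),
\]
a deterministic function of its displayed arguments.

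It then remains to average $h_t$ over $\mathbf x^{-i}_t$ against its conditional law given $\mathcal{I}^i_t$. By Lemma~\ref{lemma:conditional_prob}, equation~\eqref{eq:conditional_prob_non}, this law equals $\Prob{\mathbf x^{-i}_t\mid d^{-i}_t}$, which depends on $d^{-i}_t$ only and not on any strategy. Therefore
\[
\Exp{c_t(x^i_t,u^i_t,\mathbf x_t,\mathbf u_t)\mid \mathcal{I}^i_t}=\sum_{\mathbf x^{-i}_t}h_t(x^i_t,\mathbf x^{-i}_t,\gamma^i_t(x^i_t),\gamma^{-i}_t)\,\Prob{\mathbf x^{-i}_t\mid d^{-i}_t},
\]
a function of $(x^i_t,d^{-i}_t,\gamma^i_t(x^i_t),\gamma^{-i}_t)$ and hence, after re-expressing $d^{-i}_t$ through $(x^i_t,d_t)$, of the claimed form $\ell_t(x^i_t,d_t,\gamma^i_t(x^i_t),\gamma^{-i}_t)$.

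The one delicate step is the inner expectation: one must check that conditioning on the whole history $\mathcal{I}^i_t$ together with $\mathbf x^{-i}_t$ leaves $u^i_t\sim\gamma^i_t(x^i_t)$, leaves $u^j_t\sim\gamma^{-i}_t(x^j_t)$ for $j\neq i$, and makes all of these jointly independent. This is where the non-classical information structures~\eqref{eq:info-DSS}--\eqref{eq:info-ns}, the mutual independence of the primitive random variables, and the independence of the mixing devices are used — the same ingredients underlying the Markov property of Lemma~\ref{lemma:markov-mf}; everything else is a routine resummation plus the appeal to Lemma~\ref{lemma:conditional_prob}.
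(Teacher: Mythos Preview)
Your proposal is correct and follows essentially the same route as the paper's proof: both expand the expectation over $(\mathbf x^{-i}_t,\mathbf u_t)$, factor the action law as $\gamma^i_t(x^i_t)(u^i_t)\prod_{j\neq i}\gamma^{-i}_t(x^j_t)(u^j_t)$ given the state profile, and then invoke Lemma~\ref{lemma:conditional_prob} to reduce $\Prob{\mathbf x^{-i}_t\mid \mathcal I^i_t}$ to a function of $d^{-i}_t$ alone. Your presentation via the tower property is a bit more explicit about the conditional-independence step, but the argument is the same.
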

\begin{proof}
The proof is presented in Appendix~\ref{sec:proof_lemma:cost}.
\end{proof}

\begin{remark}
\emph{Note that $d^{-i}_t$ is not necessarily a Markov process, i.e.
%\begin{equation}
$\Prob{d^{-i}_{t+1}|d^{-i}_{1:t}, \gamma^{-i}_{1:t}}\neq \Prob{d^{-i}_{t+1}|d^{-i}_{t}, \gamma^{-i}_{t}}$.
%\end{equation}
When $n=\infty$, however,  the above inequality becomes equality due to the  negligible effect. }
\end{remark}

\begin{remark}
\emph{Note that the results of Theorem~\ref{thm:dynamics_iid} and Lemmas~\ref{lemma:markov-mf}--\ref{lemma:cost}  hold irrespective of control laws $g^i_{1:t}$ and $\psi^{-i}_{1:t}$, $t \in \mathbb{N}_T$.}
\end{remark}

\section{Finite Horizon}\label{sec:DSS}
\subsection{Solution of Problem~\ref{prob:DSS}}
 Define value  functions $V_{T+1}, V_T,\ldots,V_1$ such that for any $(x^i_{T+1},d_{T+1}) \in \mathcal{X} \times \Mspace$,  $V_{T+1}(x^i_{T+1}, d_{T+1})=0$, and for any $t \in \mathbb{N}_T$ and $(x^i_{t},d_{t} ) \in \mathcal{X}\times \Mspace$,
\begin{multline}\label{eq:dp_value-i}
V_t(x^i_t,d_t)=\min_{\gamma^i_t(x^i_t) \in \mathcal{P}(\mathcal{U})}( \ell_t(x^i_t, d_t, \gamma^{i}_t(x^i_t),\gamma^{-i}_t)\\
+ \mathbb{E}[ V_{t+1}(x^i_{t+1},d_{t+1}) \mid x^i_t, d_t, \gamma^i_t(x^i_t), \gamma^{-i}_t] ), 
\end{multline}
where  $\ell_t$ is given  by Lemma~\ref{lemma:per_step_cost} and $\gamma^{-i}_t$ is the local law of all players except  player~$i$ at time $t$.
\begin{Lemma}\label{thm:2}
 If   all players  except player $i \in \mathbb{N}_n$ use the same DSS strategy, then   the best-response strategy  for player~$i$ is obtained from dynamic program~\eqref{eq:dp_value-i}.
\end{Lemma}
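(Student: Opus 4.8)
The plan is to recognize player~$i$'s best-response problem, once the strategies of the remaining $n-1$ players are frozen, as a standard finite-horizon Markov decision process and then invoke the usual backward-induction verification argument. First I would fix an arbitrary DSS strategy used by all players other than~$i$; by Lemma~\ref{lemma:fair} these players share a common control law, which under DSS reduces, via the decomposition~\eqref{eq:def-gamma-DSS}, to a local law $\gamma^{-i}_t \in \mathcal{G}$ that is a (deterministic, history-measurable) function of the observed deep-state history $d_{1:t}$. Player~$i$, observing $(x^i_{1:t}, d_{1:t})$ at time~$t$, must choose $\gamma^i_t(x^i_t) \in \mathcal{P}(\mathcal{U})$ so as to minimize ${J^i_n(\mathbf g^i,\mathbf g^{-i})}_{t_0}$ for every $t_0 \in \mathbb{N}_T$.

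The structural ingredients are already available: Lemma~\ref{lemma:conditional_prob} shows that player~$i$'s posterior over the states $\mathbf x^{-i}_t$ of the others depends on the history only through $d^{-i}_t$; Lemma~\ref{lemma:per_step_cost} (or Lemma~\ref{lemma:cost} in the arbitrarily-coupled case) shows that the conditional expected per-step cost collapses to $\ell_t(x^i_t, d_t, \gamma^i_t(x^i_t), \gamma^{-i}_t)$; and Lemma~\ref{lemma:markov-mf}, together with Theorem~\ref{thm:dynamics_iid}, shows that the one-step conditional law of $(x^i_{t+1}, d_{t+1})$ factorizes and depends on the past only through $(x^i_t, d_t, \gamma^i_t(x^i_t), \gamma^{-i}_t)$. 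Hence, with $\gamma^{-i}_t$ regarded as an exogenous parameter, $(x^i_t, d_t)$ is a controlled Markov state for player~$i$ and $\ell_t$ is the corresponding running cost. With this reduction, I would close the argument by backward induction on $t$ from $T+1$ down to~$1$: set $V_{T+1}\equiv 0$ and define $V_t$ by~\eqref{eq:dp_value-i}; since $\mathcal{X},\mathcal{U}$ are finite, $\mathcal{P}(\mathcal{U})$ is compact and the bracketed expression is continuous in $\gamma^i_t(x^i_t)$, so the minimum is attained and a measurable minimizing selection $\gamma^{i,\ast}_t(x^i_t)$ exists. The induction hypothesis is that, conditionally on the history, ${J^i_n(\mathbf g^i,\mathbf g^{-i})}_{t}\ge \mathbb{E}[V_t(x^i_t,d_t)]$ for every admissible DSS strategy $\mathbf g^i$, with equality when player~$i$ uses the minimizing selections from time $t$ on. The inductive step is routine: write ${J^i_n}_{t}=\mathbb{E}[c_t(x^i_t,u^i_t,\mathfrak D_t)]+\mathbb{E}[{J^i_n}_{t+1}]$, replace the first term by $\ell_t$ (Lemma~\ref{lemma:per_step_cost}), apply the tower property and the Markov transition of Lemma~\ref{lemma:markov-mf} to the second, invoke the hypothesis, and bound below by the minimum in~\eqref{eq:dp_value-i}, equality propagating along the minimizing strategy. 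Taking $t=t_0$ shows the strategy assembled from the pointwise minimizers of~\eqref{eq:dp_value-i} is a best response at every stage.

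The main obstacle — and the only place genuine care is needed — is the claim that $(x^i_t,d_t)$ is an adequate information state, i.e.\ that conditioning on $(x^i_{1:t},d_{1:t})$ may be replaced by conditioning on the current pair when evaluating future costs. This is delicate because, as the remark following Lemma~\ref{lemma:cost} notes, $d^{-i}_t$ is \emph{not} a Markov process for finite~$n$; what rescues the argument is that the non-Markovian dependence enters only through the history-measurable parameter $\gamma^{-i}_t=\psi^{-i}_t(d_{1:t})$, which is held fixed when computing player~$i$'s best response, while the one-step kernel of $(x^i_t,d_t)$ given that parameter is Markov by Theorem~\ref{thm:dynamics_iid} and Lemma~\ref{lemma:markov-mf}. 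I would therefore phrase the induction with $V_t$ carrying the history implicitly through the current value of $\gamma^{-i}_t$ (so~\eqref{eq:dp_value-i} is read with $\gamma^{-i}_t$ as a given argument), and note that it collapses to a bona fide function of $(x^i_t,d_t)$ precisely when the opponents' strategy is Markov in the deep state — the case relevant to the deep Nash equilibrium of Theorem~\ref{thm:fair}.
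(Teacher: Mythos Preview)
Your proposal is correct and follows essentially the same approach as the paper: establish that $(x^i_t,d_t)$ is an information state for player~$i$ by appealing to the Markov transition (Lemma~\ref{lemma:markov-mf}) and the reduced per-step cost (Lemma~\ref{lemma:per_step_cost}), then invoke the standard finite-horizon dynamic-programming verification. Your treatment is more explicit than the paper's terse version --- in particular your discussion of why the non-Markovian nature of $d^{-i}_t$ does not obstruct the information-state argument (because the history-dependence enters only through the frozen $\gamma^{-i}_t$) and your attention to measurable selection --- but the underlying logic is identical.
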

\begin{proof}
The proof follows from the fact that  the stochastic  process $(x^i_{t+1}, d_{t+1})$ evolves in a Markovian manner under $\gamma^i_t(x^i_t)$ and $\gamma^{-i}_t$, according to Lemma~\ref{lemma:markov-mf}. In addition,  from Lemma~\ref{lemma:per_step_cost}, it follows that the expected per-step cost  of player~$i$ at time~$t$ can be described by $\ell_t(x^i_t,d_t,\gamma^i_t(x^i_t),\gamma^{-i}_t)$. Since the results of Lemmas~\ref{lemma:markov-mf} and~\ref{lemma:per_step_cost} do not depend on  $g^i_{1:t}$ and $\psi^{-i}_{1:t}$,  $(x^i_t,d_t)$ is an information state for player $i$. Subsequently, one can write  the dynamic program~\eqref{eq:dp_value-i} in order to find the best-response strategy of player~$i$.
\end{proof}
\begin{remark}\label{remark:arbitrarily-coupled}
According to Lemma~\ref{lemma:cost},  dynamic programming decomposition  proposed in  Lemma~\ref{thm:2} extends to any arbitrarily-coupled (asymmetric) cost function.
\end{remark}

In the next theorem, it is shown that deep Nash equilibrium  always exists.
\begin{Theorem}\label{thm:fair}
Problem~\ref{prob:DSS} always admits an index-invariant solution  that satisfies  the best-response equation~\eqref{eq:dp_value-i} across all players, simultaneously.
%
%If  the generic player's equation~\eqref{eq:DP_DSS} admits a solution,  that solution is  a Nash strategy for Problem~\ref{prob:DSS}.
\end{Theorem}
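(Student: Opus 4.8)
The plan is to establish existence of an index-invariant deep Nash equilibrium by a fixed-point argument on the space of index-invariant DSS strategies. First I would observe, via Lemma~\ref{thm:2}, that if every player other than $i$ commits to a common DSS strategy, then the best response of player~$i$ is generated by the dynamic program~\eqref{eq:dp_value-i}, whose stagewise minimization is over the compact convex set $\mathcal{P}(\mathcal{U})$ for each fixed value of the information state $(x^i_t, d_t)$. The key reduction is that, because the players are homogeneous and the value recursion depends on the ``others'' strategy only through the local law $\gamma^{-i}_t \in \mathcal{G}$ induced by it (Theorem~\ref{thm:dynamics_iid} and Lemmas~\ref{lemma:markov-mf}--\ref{lemma:per_step_cost}), the whole problem collapses to a self-consistent (symmetric) fixed point in the finite-dimensional space of stagewise laws. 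Concretely, I would define the set $\mathcal{S} := \prod_{t=1}^{T}\mathcal{G}$ of candidate common strategies (each $\mathcal{G}$ being the set of maps $\mathcal{X}\to\mathcal{P}(\mathcal{U})$, a product of simplices, hence compact and convex), and define a best-response correspondence $\mathrm{BR}:\mathcal{S}\rightrightarrows\mathcal{S}$ sending a common strategy $\gamma$ to the set of strategies that are optimal for player~$i$ in the dynamic program~\eqref{eq:dp_value-i} when $\gamma^{-i}=\gamma$.

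The second step is to verify the hypotheses of Kakutani's fixed-point theorem for $\mathrm{BR}$. Nonemptiness of $\mathrm{BR}(\gamma)$ follows because each stagewise minimization in~\eqref{eq:dp_value-i} is a continuous function minimized over a nonempty compact set, so an argmin (in fact an argmin selected uniformly in $(x^i_t,d_t)$, which ranges over a finite set) exists; working backward from $t=T$ gives a full strategy. Convexity of $\mathrm{BR}(\gamma)$ for each $\gamma$ holds because, once $\gamma^{-i}$ is fixed, the stage objective $\ell_t(x^i_t,d_t,\cdot,\gamma^{-i}_t)+\mathbb{E}[V_{t+1}\mid x^i_t,d_t,\cdot,\gamma^{-i}_t]$ is affine (linear) in the decision variable $\gamma^i_t(x^i_t)\in\mathcal{P}(\mathcal{U})$ — this is immediate from Lemma~\ref{lemma:per_step_cost} (the cost is an expectation over $u\sim\gamma^i_t(x^i_t)$, hence linear in $\gamma^i_t(x^i_t)$) and from~\eqref{eq:dynamics-prob} (the transition $\mathcal{T}_t$ is linear in $\gamma^i_t(x^i_t)(\cdot)$) — so the set of minimizers is a face of the simplex, which is convex; doing this at every stage and every information-state value, and using that the value $V_{t+1}$ does not depend on player~$i$'s own law, the product over stages/states stays convex. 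Upper hemicontinuity (closed graph) of $\mathrm{BR}$ follows from the Maximum Theorem applied stagewise: $\ell_t$ and $\mathcal{T}_t$ are continuous in $(\gamma^i_t(x^i_t),\gamma^{-i}_t)$ jointly, the value functions $V_t$ are continuous in $\gamma^{-i}$ by backward induction (finite compositions of continuous operations over finite sets), hence the argmin correspondence has closed graph.

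Having checked these, Kakutani yields a common strategy $\gamma^\ast\in\mathcal{S}$ with $\gamma^\ast\in\mathrm{BR}(\gamma^\ast)$; by construction this means the profile $\{\mathbf g^\ast,\ldots,\mathbf g^\ast\}$ in which every player uses the DSS strategy inducing $\gamma^\ast$ has the property that player~$i$'s dynamic-program value~\eqref{eq:dp_value-i} is the best response against the others, and by Lemma~\ref{thm:2} and Lemma~\ref{lemma:markov-mf} this best-response property is exactly the sequential (every-$t_0$) inequality in Definition~\ref{def:DSS}; index-invariance holds because all players use the same generic law (Lemma~\ref{lemma:fair}), and symmetry of the fixed point makes the argument identical for every~$i$. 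The main obstacle I anticipate is purely bookkeeping rather than conceptual: one must be careful that the ``best response'' produced by~\eqref{eq:dp_value-i} is a genuine DSS strategy $\mathbf g^i$ (a map on $\mathcal{X}\times(\Mspace)^t$) and that the induced local law $\gamma^{-i}$ is consistent with the decomposition~\eqref{eq:def-gamma-DSS}, i.e. that the reduction from full DSS strategies to stagewise laws in $\mathcal{G}$ is lossless for the fixed-point purpose — this is guaranteed by the remark that Theorem~\ref{thm:dynamics_iid} and Lemmas~\ref{lemma:markov-mf}--\ref{lemma:cost} hold irrespective of $g^i_{1:t}$ and $\psi^{-i}_{1:t}$, so the value recursion genuinely only sees $\gamma^{-i}_t$, but it should be spelled out. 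A secondary point worth a sentence is that the fixed point need only be in mixed (behavioral) strategies — pure-strategy equilibria need not exist — which is why the minimization in~\eqref{eq:dp_value-i} is over all of $\mathcal{P}(\mathcal{U})$ rather than its vertices.
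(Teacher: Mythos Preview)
Your proposal is correct and follows the same essential route as the paper: a Kakutani-type fixed-point argument on the symmetric best-response correspondence, using (i) linearity of the stage objective in the player's own mixed action $\gamma^i_t(x^i_t)$ (via Lemma~\ref{lemma:per_step_cost} and~\eqref{eq:dynamics-prob}) to obtain convex-valuedness of the argmin, and (ii) continuity of the objective in the others' law $\gamma^{-i}_t$ (via Theorem~\ref{thm:dynamics_iid} and Lemmas~\ref{lemma:joint_empirical}--\ref{lemma:per_step_cost}, the binomial probabilities being continuous in their success parameter) to obtain closed graph. The only organizational difference is that the paper carries out the fixed-point argument \emph{stagewise}: backward from $t=T$, and for each fixed $d_t\in\Mspace$ separately, it solves $\gamma_t=B_t(d_t)(\gamma_t)$ in the single-stage space $\mathcal{G}$, with $V_{t+1}$ already pinned down from the previous step of the induction. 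You instead set up a single global fixed point over the product of all stages. Both are valid; the paper's decomposition keeps the ambient space for each fixed-point problem as small as possible and makes the backward-induction structure explicit, while yours is closer to the textbook Nash-existence template and avoids having to specify in what order the stagewise equilibria are selected.

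One notational slip worth correcting: your space $\mathcal{S}=\prod_{t=1}^T\mathcal{G}$ omits the dependence of the local law on $d_t$ that you yourself invoke later (``uniformly in $(x^i_t,d_t)$'' and ``at every stage and every information-state value''). As written, $\mathrm{BR}$ need not map into $\mathcal{S}$, since the argmin in~\eqref{eq:dp_value-i} genuinely varies with $d_t$. You should take $\mathcal{S}=\prod_{t=1}^T\mathcal{G}^{\Mspace}$ (still a finite product of simplices, hence compact and convex, so Kakutani still applies); the rest of your argument then goes through unchanged.
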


\begin{proof}
For ease of display,  we present the  best-response equation~\eqref{eq:dp_value-i} as follows:
\begin{equation}\label{eq:Best}
\gamma^i_t= B_t(d_t)(\gamma^{-i}_t),
\end{equation}
where for any $t \in \mathbb{N}_T$ and $d_t \in \Mspace$, $B_t(d_t): \mathcal{G} \rightarrow \mathcal{G}$, i.e.,
\begin{align}\label{eq:Best_1}
B_t(d_t)&:=\argmin_{\gamma^i_t(\boldsymbol \cdot)} ( \ell_t(\boldsymbol \cdot, d_t, \gamma^{i}_t(\boldsymbol \cdot),\gamma^{-i}_t)\\
&\quad + \mathbb{E}[ V_{t+1}(x^i_{t+1},d_{t+1}) \mid \boldsymbol \cdot, d_t, \gamma^i_t(\boldsymbol \cdot), \gamma^{-i}_t] ).
\end{align}
 We now  show  that  equation~\eqref{eq:Best} admits at least one  index-invariant fixed-point solution at each time $t \in \mathbb{N}_T$, i.e., $\gamma_t:=\gamma^i_t=\gamma^{-i}_t$.  At any time $t \in \mathbb{N}_T$, given any $x^i_t$, $d_t$ and $\gamma^{-i}_t$, the  solution of  equation~\eqref{eq:dp_value-i} can be expressed as follows:
 \begin{align}
 &\argmin_{\gamma^i_t(x^i_t)} \ell_t(x^i_t,d_t, \gamma^i_t(x^i_t),\gamma^{-i}_t) \\
 &+ \sum_{x^i_{t+1}, d_{t+1}} \Prob{x^i_{t+1},d_{t+1}\mid x^i_t, d_t, \gamma^i_t(x^i_t), \gamma^{-i}_t} V_{t+1}(x^i_{t+1},d_{t+1})\\
& \substack{(a)\\=} \argmin_{\gamma^i_t(x^i_t)} \ell_t(x^i_t,d_t, \gamma^i_t(x^i_t), \gamma^{-i}_t)\\
 &+\sum_{x^i_{t+1},d^{-i}_{t+1}} \big[\sum_{u} \Prob{x^i_{t+1} \mid x^i_t, u, d_t}\gamma^i_t(x^i_t)(u) \big] \\
& \times \Prob{d^{-i}_{t+1}\mid d^{-i}_t, \gamma^{-i}_t} V_{t+1}(x^i_{t+1}, \frac{n-1}{n} d^{-i}_{t+1} + \frac{1}{n} \delta(x^i_{t+1})),
 \end{align}
 where $(a)$ follows from Lemma~\ref{lemma:markov-mf} and equations~\eqref{eq:dynamics-prob} and~\eqref{eq:def-MF-n-1}.   Note that the argument of the above equation is piece-wise linear in $\gamma^{i}_t(x^i_t)$ due to Lemma~\ref{lemma:per_step_cost}. Hence, the minimization problem is a convex optimization, i.e.,  $B_t(d_t)(\gamma^{-i}_t)$ is a non-empty and  convex set. In addition,   $B_t(d_t)(\boldsymbol \cdot)$ is a closed graph because it is continuous with respect to $\gamma^{-i}_t$ according to Theorem~\ref{thm:dynamics_iid} and Lemmas~\ref{lemma:joint_empirical} and~\ref{lemma:per_step_cost}, on noting that binomial probability distribution is continuous with respect to the success probability. Since $\mathcal{G}$ is a  non-empty, compact and convex subset of a locally convex Hausdorff space,  the set-valued mapping $B$ has a fixed-point solution~\cite[Chapter 17]{charalambos2006infinite}. This means that there exists a local law $\gamma_t \in \mathcal{G}$ such that:
$
 \gamma_t=B_t(d_t)(\gamma_t).
 $
\end{proof}
In the  view of Theorem~\ref{thm:fair}, we remove the subscript $i$ in the sequel  because  the deep Nash equilibrium, identified by the dynamic program~\eqref{eq:dp_value-i},  is index-invariant.  Note that the equilibrium still depends on the number of players $n$. Consequently,  we define a \emph{generic} player with the same dynamics and per-step cost as any individual  player, which competes against $n-1$ identical players of its own kind.  Let $x_t \in \mathcal{X}$ and $\gamma_t \in \mathcal{G}$ denote  the state and local law of the generic player  at time $t \in \mathbb{N}_T$, respectively, and  define  the value  functions $V_{T+1}, V_T,\ldots,V_1$  such that for any $(x_{T+1},d_{T+1} ) \in \mathcal{X} \times  \Mspace$,  $V_{T+1}(x_{T+1}, d_{T+1})=0$, and for any $t \in \mathbb{N}_T$ and  $(x_{t},d_{t}) \in \mathcal{X}  \times  \Mspace$,
\begin{multline}\label{eq:DP_DSS}
V_t(x_{t}, d_{t})=\min_{\gamma_t(x_t) \in  \mathcal{P}(\mathcal{U})}(  \ell_t(x_t, d_t,\gamma_t(x_t),\gamma_t)\\
+ \mathbb{E}[V_{t+1}(x_{t+1}, d_{t+1}) \mid x_{t}, d_t, \gamma_t(x_t), \gamma_t]).
\end{multline}

\subsection*{Infinite population: A special case}
Consider a special case in which the number of players is infinite, i.e. $n =\infty$. In this case,  deep state $d_t \in \Mspace$ reduces to mean field $m_t  \in \mathcal{P}(\mathcal{X})$, $ t \in \mathbb{N}_T$, where  the average of any  infinite number of  i.i.d.  binary random variables is  equal to their expectation, almost surely, according to the strong law of large numbers. Consequently, the dynamics of the deep state  can be simplified  for $n=\infty$, according to \eqref{eq:expectation_theorem}. In particular,  define a vector-valued function $\hat f_t: \mathcal{P}(\mathcal{X}) \times \mathcal{G} \rightarrow \mathcal{P}(\mathcal{X})$, $t \in \mathbb{N}_T,$ such that for any $m \in \mathcal{P}(\mathcal{X})$ and $\gamma \in \mathcal{G}$, 
\begin{equation}\label{eq:def-hat-f}
\hat f_t(m,\gamma):= \sum_{x \in \mathcal{X}} m(x) \mathcal{T}_t(\boldsymbol \cdot, x, \gamma(x), m),
\end{equation}
where $m_1:=P_X$, and for any $t \in \mathbb{N}_T$:
$m_{t+1}:=\hat f_t(m_t, \gamma_t)$.   In addition,  from Lemma~\ref{lemma:per_step_cost} and equations~\eqref{eq:def-MF-n-1} and~\eqref{eq:joint-lemma},  define  the infinite-population per-step cost function $\hat \ell_t:  \mathcal{X} \times \mathcal{P}(\mathcal{X}) \times \mathcal{P}(\mathcal{U}) \times \mathcal{G}$ at time $t \in \mathbb{N}_T$ as follows:
\begin{align}\label{eq:cost_infinite}
\begin{cases}
\hat \ell_t(x_t, m_t, \gamma_t(x), \gamma_t):=\sum_{u \in \mathcal{U}} c_t(x_t,u, \mathfrak{M}_t) \gamma_t(x)(u),  \\
\mathfrak{M}_t(x,u):=m_t(x) \gamma_t(x)(u), x \in \mathcal{X}, u \in \mathcal{U},
\end{cases}
\end{align}
where $\mathfrak{M}_t \in \mathcal{P}(\mathcal{X} \times \mathcal{U})$ denotes the infinite-population  $\mathfrak{D}_t \in \mathcal{E}_\infty(\mathcal{X} \times \mathcal{U})$. Finally, define value functions $\hat V_{T+1}, \hat V_T,\ldots,\hat V_1$ such that for any $(x_{T+1},m_{T+1} ) \in \mathcal{X} \times  \mathcal{P}(\mathcal{X})$,  $\hat V_{T+1}(x_{T+1}, m_{T+1})=0$, and for any  $t \in \mathbb{N}_T$ and    $(x_t,m_t) \in \mathcal{X} \times  \mathcal{P}(\mathcal{X})$,
\begin{multline}\label{eq:hat-V}
\hat V_t(x_t,m_t)=\min_{\gamma_t(x_t) \in \mathcal{P}(\mathcal{U})}(  \hat \ell_t(x_t, m_t,\gamma_t(x), \gamma_t)\\
 +\mathbb{E}[\hat V_{t+1}(x_{t+1},m_{t+1}) \mid x_t, m_t,  \gamma_t(x),\gamma_t]).
\end{multline}

\begin{Corollary}\label{cor:infinite}
 Given  any mean-field $m_t \in \mathcal{P}(\mathcal{X})$ at any time $t \in \mathbb{N}_T$, the fixed-point equation~\eqref{eq:hat-V} always has a  solution.
\end{Corollary}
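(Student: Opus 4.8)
The plan is to repeat the set-valued fixed-point argument from the proof of Theorem~\ref{thm:fair}, now with the infinite-population primitives $\hat f_t$, $\hat\ell_t$ and $\hat V_t$ in place of their finite-$n$ counterparts. Fix $t \in \mathbb{N}_T$ and $m_t \in \mathcal{P}(\mathcal{X})$. As in \eqref{eq:Best}--\eqref{eq:Best_1}, I would introduce the best-response correspondence $\hat B_t(m_t)$ that sends a candidate common law $\gamma_t$ (the one used by the generic player's copies) to the set of laws $\hat\gamma_t \in \mathcal{G}$ such that, for every $x_t \in \mathcal{X}$, $\hat\gamma_t(x_t)$ attains the minimum in \eqref{eq:hat-V}; a solution of \eqref{eq:hat-V} is exactly a fixed point $\gamma_t \in \hat B_t(m_t)(\gamma_t)$. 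Since $\mathcal{X}$ and $\mathcal{U}$ are finite, $\mathcal{G}=\mathcal{P}(\mathcal{U})^{|\mathcal{X}|}$ is a nonempty, compact and convex subset of a finite-dimensional, hence locally convex Hausdorff, space, so by the Kakutani--Fan--Glicksberg theorem \cite[Chapter~17]{charalambos2006infinite} it suffices to verify that $\hat B_t(m_t)$ has nonempty, convex values and a closed graph.

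For the values, I would fix $\gamma_t$ and $x_t$ and observe that the bracketed objective in \eqref{eq:hat-V} is affine in the probability vector $\gamma_t(x_t)\in\mathcal{P}(\mathcal{U})$: the per-step term is linear by \eqref{eq:cost_infinite}; the update $m_{t+1}=\hat f_t(m_t,\gamma_t)$ does not involve the generic player's own action law; and by \eqref{eq:dynamics-prob} the continuation term equals $\sum_{x_{t+1}}\mathcal{T}_t(x_{t+1},x_t,\gamma_t(x_t),m_t)\,\hat V_{t+1}(x_{t+1},m_{t+1})$, which is linear in $\gamma_t(x_t)$. Minimizing an affine functional over the simplex yields a nonempty, compact, convex set of minimizers, and taking the product over $x_t\in\mathcal{X}$ shows $\hat B_t(m_t)(\gamma_t)$ is a nonempty, compact, convex subset of $\mathcal{G}$. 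For the closed graph I would invoke Berge's maximum theorem, which needs the objective to be jointly continuous in $(\gamma_t(x_t),\gamma_t)$: continuity in $\gamma_t(x_t)$ is immediate from the affinity just noted, while continuity in $\gamma_t$ follows because $\hat f_t$ is continuous (a finite sum of products with $\mathcal{T}_t$ continuous in its third argument, mirroring the continuity of the binomial PMF in its success probability used in Theorem~\ref{thm:fair}), $\hat\ell_t$ is continuous in $\gamma_t$ by \eqref{eq:cost_infinite}, and $\hat V_{t+1}$ is continuous in $m_{t+1}$.

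This last ingredient — continuity of $\hat V_{t+1}$ — makes the argument inherently a backward induction on $t$: one proves jointly that \eqref{eq:hat-V} admits a solution and that $\hat V_t$ is continuous in $m_t$. The base case $\hat V_{T+1}\equiv 0$ is trivial, and given continuity of $\hat V_{t+1}$ the above applies Kakutani--Fan--Glicksberg to produce a fixed point of $\hat B_t(m_t)$ (the asserted solution), after which $\hat V_t$ is read off as the attained value. I expect the genuine obstacle to be closing this induction, i.e. re-establishing continuity of $\hat V_t$: because $\gamma_t$ enters the objective both as the minimizer and as the others' law, this is not a bare application of Berge's theorem, and one must use compactness of $\mathcal{G}$ together with an argument that the optimal value varies continuously with $m_t$ along fixed points. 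Everything else — the affine structure of the cost and transition in the controlled law, and the elementary continuity of the mean-field update — is routine and parallels the finite-$n$ proof of Theorem~\ref{thm:fair}.
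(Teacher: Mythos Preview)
Your approach is precisely the paper's: the proof of Corollary~\ref{cor:infinite} is a one-sentence pointer to Theorem~\ref{thm:fair}, with the deep state $d_t$ replaced by the mean field $m_t$, the dynamics by~\eqref{eq:def-hat-f} and the cost by~\eqref{eq:cost_infinite}.

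Beyond that, you are in fact more careful than the paper. In the finite-$n$ argument of Theorem~\ref{thm:fair}, $d^{-i}_{t+1}$ ranges over the finite set $\Mspacei$, so continuity of the continuation term in $\gamma^{-i}_t$ comes entirely from the transition \emph{probabilities} (the binomial PMFs in Theorem~\ref{thm:dynamics_iid} and Lemma~\ref{lemma:joint_empirical}), and no regularity of $V_{t+1}$ in its second argument is ever used. In the infinite-population setting, by contrast, $m_{t+1}=\hat f_t(m_t,\gamma_t)$ is deterministic and moves with $\gamma_t$, so the closed-graph property of $\hat B_t(m_t)$ hinges on continuity of $\hat V_{t+1}(x,\cdot)$ on $\mathcal{P}(\mathcal{X})$ --- exactly the subtlety you isolate. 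The paper does not discuss this; it simply asserts that the ``same steps'' apply. Your proposed remedy --- carry continuity of $\hat V_t$ through the backward induction alongside existence --- is the natural strengthening, and your caveat that this is not a bare application of Berge's theorem (because $\gamma_t$ appears both as the minimizer and as the others' law, so the equilibrium value need not be a parametric minimum over a fixed feasible set) is a fair warning that the paper leaves unaddressed.
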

\begin{proof}
The proof follows along the  same steps of the proof of Theorem~\ref{thm:fair}, where  deep state $d_t$ simplifies  to mean field $m_t$ with the dynamics~\eqref{eq:def-hat-f} and cost~\eqref{eq:cost_infinite}.
\end{proof}

\begin{remark}
\emph{
In mean-field games~\cite{HuangPeter2006,gomes2010discrete,Lasry2007mean}, mean-field refers to the infinite-population limit of the deep state (i.e. $\Exp{d_t}=m_t$)  and  in  mean-field-type game~\cite{carmona2013control,bensoussan2013mean,carmona2018probabilistic,Saldi2018},  it refers to the probability distribution of the state of the generic player (i.e., $\Exp{d_t}=\Prob{x_t}$). Note that  the solution concept of the mean-field-type game is generally different from   Nash equilibrium,  because  $m_t \neq \Prob{x_t}$.   For the  cooperative cost function with decoupled dynamics,~\cite{Sanjari2019}  proposes  some convexity conditions  under which the infinite-population cooperative (Nash bargaining) solution coincides with the team-optimal solution.  For the special case of linear quadratic games, the reader is referred to~\cite{Jalal2019Automatica} for  similarities and differences between mean-field games~\cite{huang2007large}, mean-field-type games~\cite{Bensoussan2016} and  deep teams~\cite{Jalal2019risk}.}
\end{remark}

%\begin{itemize}
%\item mean-field game
%
%\item mean-field-type game
%
%\item anonymous  sequential game
%
%\item stationary mean-field game
%
%\item deep control systems, where for infinite-population it is called mean-field teams. Under
%\end{itemize}
\subsection{Solution of Problem~\ref{prob:pDSS}}

%Alternatively, one may  rewrite the generic player's equation in the classical form of forward-backward equations  in mean-field games  for every sample path (see Remark~\ref{remark:forward-bakckward}) and take advantage of the existing results on the  conditions  under which  such a  solution exists. To avoid repetition, the interested reader is   referred to mean-field games~\cite{Lasry2007mean, carmona2018probabilistic, bensoussan2013mean,   Caines2013springer}  and references therein for more details.  
To  propose an approximate solution for Problem~\ref{prob:pDSS}, we make the  following mild  assumption on the model.
\begin{Assumption}\label{assumption: finite-lipschitz}
There exist  constants $K^p_t, K^c_t \in \mathbb{R}_{ \geq 0}$, $t \in \mathbb{N}_T$, (that do not depend on $n$) such that for every $x,y \in \mathcal{X}$, $u \in \mathcal{U}$, $d,m \in \mathcal{P}(\mathcal{X})$ and $\mathfrak{D}, \mathfrak{M} \in \mathcal{P}(\mathcal{X} \times \mathcal{U})$,
\begin{align}
| \Prob{y|x,u,d} - \Prob{y|x,u,m} | &\leq K^p_t\|d- m\|,\\
| c_t(x,u,\mathfrak{D}) - c_t(x,u,\mathfrak{M}) | &\leq K^c_t\|\mathfrak{D}- \mathfrak{M}\|.
\end{align}
\end{Assumption}

\begin{remark}
\emph{Note that Assumption~\ref{assumption: finite-lipschitz} is not much restrictive, and holds for any function that is polynomial  in $d$ and $\mathfrak{D}$ due to the fact that  they are confined to   the bounded domains $\mathcal{P}(\mathcal{X})$ and $\mathcal{P}(\mathcal{X} \times \mathcal{U})$, respectively. Furthermore, any continuous function can be approximated  by polynomial functions  as closely as desirable according  to  Weierstrass Theorem.}
\end{remark}

Denote by $s_t(x_t,m_t)\in  \mathcal{P}(\mathcal{U})$, $t \in \mathbb{N}_T,$  a solution of the fixed-point equation~\eqref{eq:hat-V}.  Define the following NS strategy:
\begin{equation}\label{eq:ns_strategy}
\gamma_t= s_t(\boldsymbol \cdot ,m_t), \quad t \in \mathbb{N}_T,
\end{equation}
where $m_1=P_X$, and  for any $t \in \mathbb{N}_T$,
\begin{equation}\label{eq:update_z_s}
m_{t+1}=\hat f_t(m_t, s_t(\boldsymbol \cdot, m_t)).
\end{equation}

\begin{remark}\label{remark:compute-z}
\emph{ It is to be noted that given  the  strategy $s_{1:T}$  and  probability mass function of initial states $P_X$,   $m_{1:T}$ can be  calculated  by    every player independently, according to~\eqref{eq:update_z_s}. In addition, if the players  happen to commonly change their belief about the mean-field at any stage of the game, the dynamics of the  belief system~\eqref{eq:update_z_s}  can accommodate this trembling-hand effect as the control law $s_t$ is in the state-feedback form. }
\end{remark}

%
%\begin{Lemma}\label{lemma:discrete}
%For any discrete sets $\mathcal{X}$ and $\mathcal{Y}$ in Euclidean space and  function $f: \mathcal{X} \rightarrow \mathcal{Y}$, there exists a  constant $K \in \mathbb{R}_{ geq 0}$ such that for any $x,x' \in \mathcal{X}$,
%\begin{equation}
%\Ninf{f(x) - f(x')} \leq K \Ninf{x - x'},
%\end{equation} 
%where $K:=\frac{\max_{a,b \in \mathcal{X}} (\Ninf{f(a) -f(b)} )}{\min_{a \neq b \in \mathcal{X}}(\Ninf{f(a)- f(b)})}$.
%\end{Lemma}
%From Lemma~\ref{lemma:discrete}, we establish the following Lemma.

\begin{Lemma}\label{lemma:evolution-lip}
Let Assumption~\ref{assumption: finite-lipschitz} hold. Let also  $d_t \in \Mspace$, $\mathfrak{D}_t \in \mathcal{E}(\mathcal{X}\times \mathcal{U})$,  $m_t \in \mathcal{P}(\mathcal{X})$ and $\mathfrak{M}_t \in \mathcal{P}(\mathcal{X} \times \mathcal{U})$  be  controlled by  the same local control law $\gamma_t \in \mathcal{G}$. Then, there exists a constant $K^m_t  \in \mathbb{R}_{ \geq 0}$ such that at any time $t \in \mathbb{N}_T$:
\begin{align}
\Exp{\Ninf{d_{t+1} - m_{t+1}} } \leq K^m_t \Ninf{d_t -m_t} + \mathcal{O}(\frac{1}{\sqrt{n}}),\\
\Exp{\Ninf{\mathfrak{D}_{t+1} - \mathfrak{M}_{t+1}} } \leq K^m_t \Ninf{d_t -m_t} + \mathcal{O}(\frac{1}{\sqrt{n}}),
\end{align}
where $ \mathcal{O}(\frac{1}{\sqrt{n}})$ does not depend on the control horizon $T$.
\end{Lemma}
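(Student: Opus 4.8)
The plan is to estimate the one-step growth of $\Ninf{d_{t+1}-m_{t+1}}$ by splitting $d_{t+1}-m_{t+1}$ into a stochastic fluctuation of the empirical average around its conditional mean and a deterministic bias coming from evaluating the mean-field map at $d_t$ rather than at $m_t$: the fluctuation will carry the $\mathcal O(1/\sqrt n)$ term and the bias the $K^m_t\Ninf{d_t-m_t}$ term.

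First I would identify the conditional mean. Conditioning on $\mathbf x_t$ and using that all players apply the same local law $\gamma_t$, the indicators in $d_{t+1}(y)=\frac1n\sum_i\ID{x^i_{t+1}=y}$ become conditionally independent Bernoulli variables with parameters $\mathcal T_t(y,x^i_t,\gamma_t(x^i_t),d_t)$ (the noises $w^i_t$ and the action randomizations being independent of $\mathbf x_t$, and $d_t$ being a function of $\mathbf x_t$) — the same reasoning as in the proof of Theorem~\ref{thm:dynamics_iid}, only simpler since here we work with the full $n$-player distribution. Grouping players by state value gives $\Exp{d_{t+1}(y)\mid\mathbf x_t}=\sum_x d_t(x)\mathcal T_t(y,x,\gamma_t(x),d_t)=\hat f_t(d_t,\gamma_t)(y)$, the very map~\eqref{eq:def-hat-f} driving the mean field, so that $m_{t+1}=\hat f_t(m_t,\gamma_t)$. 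The fluctuation $d_{t+1}(y)-\hat f_t(d_t,\gamma_t)(y)$ is then $\frac1n$ times a sum of $n$ independent centered variables bounded by $1$, hence has conditional variance at most $\frac1{4n}$, so by Jensen its expected absolute value is at most $\frac1{2\sqrt n}$; summing over $y\in\mathcal X$ bounds $\Exp{\Ninf{d_{t+1}-\hat f_t(d_t,\gamma_t)}}$ by $\frac{|\mathcal X|}{2\sqrt n}$, a quantity depending only on $n$ and $|\mathcal X|$ and, crucially, not on $T$.

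Next I would bound the bias $\Ninf{\hat f_t(d_t,\gamma_t)-\hat f_t(m_t,\gamma_t)}$ by adding and subtracting $m_t(x)\mathcal T_t(y,x,\gamma_t(x),d_t)$ inside each summand:
\begin{align}
\hat f_t(d_t,\gamma_t)(y)-\hat f_t(m_t,\gamma_t)(y)
&=\sum_x\big(d_t(x)-m_t(x)\big)\mathcal T_t(y,x,\gamma_t(x),d_t)\nonumber\\
&\quad+\sum_x m_t(x)\big(\mathcal T_t(y,x,\gamma_t(x),d_t)-\mathcal T_t(y,x,\gamma_t(x),m_t)\big).\nonumber
\end{align}
Since transition probabilities are at most $1$, the first sum is $\le|\mathcal X|\Ninf{d_t-m_t}$; since $\mathcal T_t(y,x,\gamma(x),\cdot)=\sum_u\Prob{y\mid x,u,\cdot}\gamma(x)(u)$ inherits the $K^p_t$-Lipschitz bound of Assumption~\ref{assumption: finite-lipschitz} in its measure argument and $\sum_x m_t(x)=1$, the second sum is $\le K^p_t\Ninf{d_t-m_t}$; thus $K^m_t:=|\mathcal X|+K^p_t$ works. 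Combining this with the triangle inequality on $d_{t+1}-m_{t+1}=(d_{t+1}-\hat f_t(d_t,\gamma_t))+(\hat f_t(d_t,\gamma_t)-\hat f_t(m_t,\gamma_t))$ (recall $m_t$ is deterministic) gives the first claim. For the second, conditioning on $\mathbf x_{t+1}$ makes the actions $u^i_{t+1}\sim\gamma_{t+1}(x^i_{t+1})$ independent, so $\Exp{\mathfrak D_{t+1}(x,u)\mid\mathbf x_{t+1}}=d_{t+1}(x)\gamma_{t+1}(x)(u)$, which differs from $\mathfrak M_{t+1}(x,u)=m_{t+1}(x)\gamma_{t+1}(x)(u)$ by at most $\Ninf{d_{t+1}-m_{t+1}}$, while $\mathfrak D_{t+1}-\Exp{\mathfrak D_{t+1}\mid\mathbf x_{t+1}}$ is again $\mathcal O(1/\sqrt n)$ in expectation by the same bounded-variance estimate; taking expectations and invoking the first claim finishes.

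The hard part is not any single estimate — each is elementary — but making sure the $\mathcal O(1/\sqrt n)$ term stays a genuinely \emph{per-step}, horizon-free quantity: it must come solely from the one-step fluctuation variance of a bounded sum of independent variables and must never get entangled with the Lipschitz constant $K^m_t$. Keeping these two contributions cleanly separated in the one-step inequality is exactly what later allows the bound to be iterated over $t=1,\dots,T$ (where the $T$-dependence legitimately enters through the accumulated constant) without the noise term blowing up with the horizon.
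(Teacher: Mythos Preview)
Your proposal is correct and follows essentially the same route as the paper: decompose $d_{t+1}-m_{t+1}$ via the triangle inequality into the stochastic fluctuation $d_{t+1}-\Exp{d_{t+1}\mid\mathbf x_t}=d_{t+1}-\hat f_t(d_t,\gamma_t)$ and the deterministic bias $\hat f_t(d_t,\gamma_t)-\hat f_t(m_t,\gamma_t)$, then handle $\mathfrak D_{t+1}-\mathfrak M_{t+1}$ by conditioning on the states and invoking the first bound. The only difference is cosmetic: the paper dispatches the Lipschitz estimate on $\hat f_t$ and the $\mathcal O(1/\sqrt n)$ fluctuation bound to \cite[Lemmas~1 and~2]{JalalCDC2017}, whereas you spell them out explicitly (and in particular exhibit the concrete constant $K^m_t=|\mathcal X|+K^p_t$), which is arguably cleaner.
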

\begin{proof}
The proof is presented in Appendix~\ref{sec:proof_lemma:evolution-lip}. 
\end{proof}

\begin{Lemma}\label{lemma:uniform_bounded_cost}
Let Assumption~\ref{assumption: finite-lipschitz} hold. For any $x \in \mathcal{X}$, $\gamma(x) \in \mathcal{G}$,  $m \in \mathcal{P}(\mathcal{X})$ and $t \in \mathbb{N}_T$, one has: $\ell_t(x,m,\gamma(x), \gamma) \leq K^{c}_t$. 
\end{Lemma}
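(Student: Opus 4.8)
The plan is to read $\ell_t$ through its defining formula in Lemma~\ref{lemma:per_step_cost} (and Lemma~\ref{lemma:cost} for the arbitrarily‑coupled case): there, $\ell_t(x,\cdot,\gamma(x),\gamma)$ is written as a weighted sum $\sum_{u,\mathfrak{D}^{-i}} c_t\big(x,u,\tfrac{n-1}{n}\mathfrak{D}^{-i}+\tfrac1n\delta(x,u)\big)\,\gamma(x)(u)\,\Prob{\mathfrak{D}^{-i}_t=\mathfrak{D}^{-i}\mid d^{-i}_t,\gamma^{-i}_t}$, whose weights are nonnegative and sum to one, and whose cost arguments all lie in the simplex $\mathcal{P}(\mathcal{X}\times\mathcal{U})$. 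Hence, regardless of whether the second argument is an empirical distribution $d\in\mathcal{E}_n(\mathcal{X})$ or a general $m\in\mathcal{P}(\mathcal{X})$, one has $\ell_t(x,\cdot,\gamma(x),\gamma)\le \sup\{\,c_t(x',u',\mathfrak{D})\ :\ x'\in\mathcal{X},\,u'\in\mathcal{U},\,\mathfrak{D}\in\mathcal{P}(\mathcal{X}\times\mathcal{U})\,\}$, so the whole lemma reduces to bounding $c_t$ uniformly on its domain.

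Next I would produce an $n$‑free bound on $c_t$ itself using Assumption~\ref{assumption: finite-lipschitz}. Fix any reference point $\mathfrak{D}_0\in\mathcal{P}(\mathcal{X}\times\mathcal{U})$, e.g.\ the uniform distribution. Since $c_t\ge 0$ and $c_t$ is $K^c_t$‑Lipschitz in its third argument, for every $x,u,\mathfrak{D}$ we get $c_t(x,u,\mathfrak{D})\le c_t(x,u,\mathfrak{D}_0)+K^c_t\,\Ninf{\mathfrak{D}-\mathfrak{D}_0}\le \max_{x\in\mathcal{X},\,u\in\mathcal{U}} c_t(x,u,\mathfrak{D}_0)+K^c_t$, using $\Ninf{\mathfrak{D}-\mathfrak{D}_0}\le 1$ together with the finiteness of $\mathcal{X}$ and $\mathcal{U}$. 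The right‑hand side is a finite constant depending only on $K^c_t$ and on the values $c_t(\cdot,\cdot,\mathfrak{D}_0)$ on the fixed simplex — in particular not on $n$, nor on $x$, $\gamma$, $m$. Combining this with the first step and (re)naming the resulting constant $K^c_t$ gives $\ell_t(x,m,\gamma(x),\gamma)\le K^c_t$ for all $x,m,\gamma,t$, as claimed. I would also remark in passing that the decomposition $c_t=\breve c_t+\frac1n\sum_i\bar c_t$ causes no trouble, since $\frac1n\sum_i\bar c_t(x^i_t,u^i_t,d_t)=\sum_{x,u}\mathfrak{D}_t(x,u)\,\bar c_t(x,u,d_t)$ is a function of $\mathfrak{D}_t$ alone, so Assumption~\ref{assumption: finite-lipschitz} applies to $c_t$ exactly as stated.

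There is essentially no hard obstacle here; this is a short consequence of the two hypotheses. The single point that genuinely requires care — and the reason the statement emphasizes that the constant "does not depend on $n$'' — is that the reference value $c_t(\cdot,\cdot,\mathfrak{D}_0)$ must be taken on the fixed simplex $\mathcal{P}(\mathcal{X}\times\mathcal{U})$ rather than on the $n$‑dependent set $\mathcal{E}_n(\mathcal{X}\times\mathcal{U})$ (which need not contain a convenient symmetric point for all $n$); since Assumption~\ref{assumption: finite-lipschitz} is stated on all of $\mathcal{P}(\mathcal{X}\times\mathcal{U})$ with an $n$‑independent constant $K^c_t$, this is immediate. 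Everything else is a routine convex‑combination and triangle‑inequality argument.
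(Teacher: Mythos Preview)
Your proposal is correct and follows essentially the same route as the paper's one-line proof, which invokes Lemma~\ref{lemma:per_step_cost}, Assumption~\ref{assumption: finite-lipschitz}, the finiteness of $\mathcal{X},\mathcal{U}$, and the diameter bound $\Ninf{\mathfrak{D}-\mathfrak{M}}\le 1$. You are in fact more careful than the paper on one point: a Lipschitz constant alone does not bound a function, so your explicit introduction of a reference value $c_t(\cdot,\cdot,\mathfrak{D}_0)$ and subsequent absorption into the constant is the right way to make the paper's literal claim $\ell_t\le K^c_t$ rigorous.
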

\begin{proof}
The proof follows from Lemma~\ref{lemma:per_step_cost}, Assumption~\ref{assumption: finite-lipschitz} and the fact that spaces $\mathcal{X}$ and $\mathcal{U}$ are finite and $\|\mathfrak{D}- \mathfrak{M}\| \leq 1, \forall \mathfrak{D},\mathfrak{M} \in \mathcal{P}(\mathcal{X} \times \mathcal{U})$.
\end{proof}
%
%\begin{Lemma}\label{lemma:uniformly_bounded}
%For  any  $m \in \mathcal{P}(\mathcal{X})$, there exist  positive-valued functions $\bar K^s_t(m), \bar K^x_t(m)$ and $\bar K^y_t(m)$,  $t \in \mathbb{N}_T$,  such that for  any $y,x \in \mathcal{X}$ and $\gamma \in \mathcal{G}$,
%\begin{align}
%&\Ninf{s_t(x,m) - s_t(y,m) } \leq \bar K^s_t(m) \Ninf{x-y},\\
%&\Ninf{\mathcal{T}_t(\boldsymbol \cdot, x,\gamma(x),m) - \mathcal{T}_t(\boldsymbol \cdot, y, \gamma(y),m)} \leq   \bar K^x_t(m)\|x- y\|,\\
%&| \ell_t(x,m,\gamma(x)) - \ell_t(y,m,\gamma(y)) | \leq \bar  K^y_t(m)\|x- y\|.
%\end{align}
%\end{Lemma}
%\begin{proof}
%The proof is presented in Appendix~\ref{sec:proof_lemma:uniformly_bounded}
%\end{proof}
%
%\begin{remark}
%\emph{The positive-valued functions in Lemma~\ref{lemma:uniformly_bounded} are not necessarily continuous with respect to   $m \in \mathcal{P}(\mathcal{X})$. }
%\end{remark}
%
%\begin{Lemma}\label{lemma:finite-lipschitz-extra}
%For  any  $m \in \mathcal{P}(\mathcal{X})$ and $t \in \mathbb{N}_T$, there exist  positive constants $K^s_t$, $K^x_t$ and  $ K^y_t$  such that 
%%\begin{multline}
%$  \bar K^s_t(m) \leq K^s_t$,  $\bar  K^x_t(m) \leq K^x_t$ and   $\bar K^y_t(m) \leq K^y_t$.
%%\end{multline}
%\end{Lemma}
%\begin{proof}
%The proof follows directly from the fact that probability of an event is less than one and Lemma~\ref{lemma:uniform_bounded_cost}.
%\end{proof}

Define the following  non-negative constants backward in time such that for any $t \in \mathbb{N}_{T}$:
\begin{align}\label{eq:kv-ko}
K^v_t&:=K^c_t+K^v_{t+1}K^m_t+ K^p_t  \sum_{\tau=1}^{t+1} \beta^{\tau-1}K^c_{\tau},\nonumber  \\
 K^o_t&:=K^v_{t+1}+K^o_{t+1},
\end{align}
where $K^v_{T+1}=K^o_{T+1}=0$.  

%\begin{Lemma}\label{lemma:Lipschitz_hat_V}
%Let Assumption~\ref{assumption: finite-lipschitz} hold. For  any  $x \in \mathcal{X}$,  $m_t \in \mathcal{P}(\mathcal{X})$ and  $t \in \mathbb{N}_T$, the inequality 
%%\begin{equation}\label{eq:inequality-value}
%$|\hat V_t(x,m) - \hat V_t(y, m)| \leq \hat K^v_t \Ninf{x -y}$ holds,
%%\end{equation}
%where $\hat K_t^v:=\bar K^y_t + |\mathcal{X}|  \prod_{\tau=1}^{t+1} K^c_\tau$
%\end{Lemma}
% \begin{proof}
%The proof is presented in Appendix~\ref{sec:proof_lemma:Lipschitz_hat_V}.
% \end{proof}

\begin{Lemma}\label{lemma:Lipschitz}
Let Assumption~\ref{assumption: finite-lipschitz} hold. For  any  $x \in \mathcal{X}$, $d_t \in \Mspace$, $m_t \in \mathcal{P}(\mathcal{X})$ and  $t \in \mathbb{N}_T$, the following inequality holds:
\begin{equation}\label{eq:inequality-value}
|V_t(x,d_t) - \hat V_t(x, m_t)| \leq K^v_t \Ninf{d_t -m_t} + K^o_t \mathcal{O}(\frac{1}{\sqrt n}),
\end{equation}
where $ \mathcal{O}(\frac{1}{\sqrt{n}})$ does not depend on the control horizon $T$.
\end{Lemma}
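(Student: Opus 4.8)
The plan is to prove the inequality~\eqref{eq:inequality-value} by backward induction on $t$, running from $t=T+1$ down to $t=1$, exactly mirroring the recursive definitions of $V_t$, $\hat V_t$ in~\eqref{eq:DP_DSS}, \eqref{eq:hat-V} and of the constants $K^v_t$, $K^o_t$ in~\eqref{eq:kv-ko}. The base case $t=T+1$ is immediate since $V_{T+1}\equiv\hat V_{T+1}\equiv 0$ and $K^v_{T+1}=K^o_{T+1}=0$. For the inductive step, fix $t\in\mathbb{N}_T$ and assume the bound holds at $t+1$. First I would use the elementary fact that the difference of two minima is bounded by the supremum over the common argument of the difference of the two objective functions: writing the bracketed expression in~\eqref{eq:DP_DSS} as $g_t(\gamma_t(x))$ and the one in~\eqref{eq:hat-V} as $\hat g_t(\gamma_t(x))$, one has $|V_t(x,d_t)-\hat V_t(x,m_t)|\le \max_{\gamma_t(x)\in\mathcal{P}(\mathcal{U})}|g_t(\gamma_t(x))-\hat g_t(\gamma_t(x))|$. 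So the whole problem reduces to controlling, uniformly in $\gamma_t(x)$, the two pieces: the per-step cost gap $|\ell_t(x,d_t,\gamma_t(x),\gamma_t)-\hat\ell_t(x,m_t,\gamma_t(x),\gamma_t)|$ and the continuation gap $|\mathbb{E}[V_{t+1}(x_{t+1},d_{t+1})\mid x,d_t,\cdot]-\mathbb{E}[\hat V_{t+1}(x_{t+1},m_{t+1})\mid x,m_t,\cdot]|$.

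For the per-step cost gap, I would invoke Lemma~\ref{lemma:per_step_cost} together with Lemma~\ref{lemma:joint_empirical}: $\ell_t$ is an average of $c_t$ over the conditional law of $\mathfrak{D}^{-i}_t$, whose mean is $d^{-i}_t(x)\gamma^{-i}_t(x)(u)$ with fluctuations of order $\mathcal{O}(1/\sqrt n)$ (binomial standard deviation), while $\hat\ell_t$ is the same average against the deterministic $\mathfrak{M}_t(x,u)=m_t(x)\gamma_t(x)(u)$. Using the Lipschitz bound on $c_t$ in Assumption~\ref{assumption: finite-lipschitz}, the cost gap is at most $K^c_t$ times ($\|d_t-m_t\|$ coming from the mean discrepancy) plus an $\mathcal{O}(1/\sqrt n)$ term from the concentration of $\mathfrak{D}^{-i}_t$ around its mean and from the $\tfrac1n\delta(x^i_t,u)$ correction; this accounts for the $K^c_t\Ninf{d_t-m_t}$ contribution in the definition of $K^v_t$. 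For the continuation gap I would split it with a triangle inequality into (i) $|\mathbb{E}[V_{t+1}(x_{t+1},d_{t+1})]-\mathbb{E}[\hat V_{t+1}(x_{t+1},d_{t+1})]|$, handled directly by the induction hypothesis applied pointwise and then taking expectation, yielding $K^v_{t+1}\,\mathbb{E}\|d_{t+1}-m_{t+1}\|+K^o_{t+1}\mathcal{O}(1/\sqrt n)$, and then Lemma~\ref{lemma:evolution-lip} converts $\mathbb{E}\|d_{t+1}-m_{t+1}\|$ into $K^m_t\|d_t-m_t\|+\mathcal{O}(1/\sqrt n)$, giving the $K^v_{t+1}K^m_t$ term of $K^v_t$; and (ii) $|\mathbb{E}[\hat V_{t+1}(x_{t+1},m_{t+1})\mid x,d_t,\cdot]-\mathbb{E}[\hat V_{t+1}(x_{t+1},m_{t+1})\mid x,m_t,\cdot]|$, which compares the same function $\hat V_{t+1}$ under two different transition kernels for $x_{t+1}$ (one driven by $d_t$, one by $m_t$); here I would need a Lipschitz bound on $\hat V_{t+1}$ in its first-argument-distribution — but since $x_{t+1}$ ranges over the finite set $\mathcal{X}$, this is really a bound $\sum_{y}|\Prob{y\mid x,u,d_t}-\Prob{y\mid x,u,m_t}|\cdot\max_y|\hat V_{t+1}(y,m_{t+1})|\le |\mathcal{X}|K^p_t\|d_t-m_t\|\cdot\|\hat V_{t+1}\|_\infty$, and $\|\hat V_{t+1}\|_\infty$ is bounded by $\sum_{\tau=t+1}^{T}\beta^{\tau-1}K^c_\tau$-type sums via Lemma~\ref{lemma:uniform_bounded_cost}, producing the $K^p_t\sum_{\tau=1}^{t+1}\beta^{\tau-1}K^c_\tau$ term. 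Collecting the three contributions and matching them against~\eqref{eq:kv-ko} closes the induction; the $K^o_t=K^v_{t+1}+K^o_{t+1}$ recursion simply bookkeeps the accumulation of the $\mathcal{O}(1/\sqrt n)$ terms, and the remark that $\mathcal{O}(1/\sqrt n)$ is horizon-independent follows because the same statement holds in Lemma~\ref{lemma:evolution-lip}.

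The main obstacle I anticipate is step (ii) of the continuation gap — the comparison of $\hat V_{t+1}$ under two different $x_{t+1}$-kernels — because it requires a uniform bound on $\|\hat V_{t+1}\|_\infty$ (or equivalently a growth/Lipschitz estimate on the value function) that is itself obtained by a separate backward recursion, and one must be careful that the constant one extracts there is exactly the $\sum_{\tau=1}^{t+1}\beta^{\tau-1}K^c_\tau$ appearing in~\eqref{eq:kv-ko} (note the discount factor $\beta$ is presumably $1$ in the finite-horizon section, so these are plain partial sums of the $K^c_\tau$). A secondary subtlety is ensuring that all the $\mathcal{O}(1/\sqrt n)$ terms genuinely carry constants independent of $T$ and of $t$; this is inherited from Lemma~\ref{lemma:evolution-lip}, but one should state explicitly that the per-step contribution is a fixed $\mathcal{O}(1/\sqrt n)$ and that the recursion for $K^o_t$ only multiplies it by quantities ($K^v_{t+1}$, $1$) that do not involve $n$. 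Everything else is routine manipulation of finite sums and the elementary min-difference inequality.
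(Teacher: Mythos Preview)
Your proposal is correct and follows essentially the same backward-induction route as the paper: the paper writes $A_tC_t - B_tD_t = A_t(C_t-D_t) + (A_t-B_t)D_t$ (with $A_t,B_t$ the two transition kernels and $C_t,D_t$ the two value functions), which is precisely your split into (i) and (ii), and it bounds the three pieces using Lemma~\ref{lemma:evolution-lip}, the induction hypothesis, and Lemma~\ref{lemma:uniform_bounded_cost} exactly as you outline. Two cosmetic points: in your item (i) the second argument of $\hat V_{t+1}$ should read $m_{t+1}$, not $d_{t+1}$ (your subsequent text makes the intent clear); and the paper argues one direction of the inequality and relies on symmetry, whereas your use of $|\min g-\min\hat g|\le\max|g-\hat g|$ gives both sides at once, which is slightly cleaner.
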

 \begin{proof}
% The proof  is based on the backward induction,  Assumptions~\ref{ass:iid} and~\ref{assumption: finite-lipschitz}, and  Lemmas~\ref{lemma:evolution-lip}--\ref{lemma:finite-lipschitz-extra}. 
The proof is presented in Appendix~\ref{sec:proof_lemma:Lipschitz}.
 \end{proof}
 
 Let $\hat x_t$ and $\hat d_t$ denote the state and deep-state of the  generic player at time $t \in \mathbb{N}_T$ under the proposed  NS strategy \eqref{eq:ns_strategy}.
 \begin{Lemma}
 Let Assumption~\ref{assumption: finite-lipschitz} hold.  Given any $\hat d_t \in \Mspace$ and  $m_t \in \mathcal{P}(\mathcal{X})$,  $t \in \mathbb{N}_T$, the following inequality holds:
% \begin{align}
% |\sum_{x, d} \sum_{u}c_t(x,u,\mathfrak{D}_t)s_t(x,m_t)(u) \mathbb{P}( \hat x_t=x, \hat d_t=d)\\
% -  \sum_{x, m} \sum_{u}c_t(x,u,\mathfrak{M}_t)s_t(x,m_t)(u)\Prob{x_t=x,m_t=m}| \leq  K
% \end{align}
\begin{align}
|\Exp{\sum_{\tau=1}^t \sum_{u}c_t(\hat x_t,u, \hat{\mathfrak{D}}_t) s_t(\hat x_t,m_t)(u)} 
-\Exp{\sum_{\tau=1}^t \sum_{u}c_t(x_t,u, \mathfrak{M}_t)\\
\times  s_t( x_t,m_t)(u) } | \leq  K^v_t \Ninf{\hat d_t -m_t} + K^o_{t} \mathcal{O}(\frac{1}{\sqrt n}).
\end{align}
 \end{Lemma}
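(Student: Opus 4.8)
The plan is to compare the finite‑ and infinite‑population realizations of the NS strategy~\eqref{eq:ns_strategy} \emph{stage by stage}. Both the $n$‑player system and its $n=\infty$ limit are here driven by the same deterministic sequence of local laws $\gamma_\tau:=s_\tau(\boldsymbol\cdot,m_\tau)$ fixed by~\eqref{eq:ns_strategy}--\eqref{eq:update_z_s}, so no minimization is involved and the argument is a simplified specialization of (a sub‑step of) the proof of Lemma~\ref{lemma:Lipschitz}; this is also why the constants of~\eqref{eq:kv-ko} reappear. First I would rewrite each summand through an information state: by Lemma~\ref{lemma:per_step_cost}, $\Exp{\sum_u c_\tau(\hat x_\tau,u,\hat{\mathfrak{D}}_\tau)\,s_\tau(\hat x_\tau,m_\tau)(u)\mid \hat x_\tau,\hat d_\tau}=\ell_\tau(\hat x_\tau,\hat d_\tau,s_\tau(\hat x_\tau,m_\tau),\gamma_\tau)$, while by~\eqref{eq:cost_infinite} the infinite‑population summand is $\hat\ell_\tau(x_\tau,m_\tau,s_\tau(x_\tau,m_\tau),\gamma_\tau)$, so the left‑hand side of the claimed inequality becomes $|\sum_\tau\Exp{\ell_\tau}-\sum_\tau\Exp{\hat\ell_\tau}|$.

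The key per‑stage estimate to establish is $|\ell_\tau(x,d,s_\tau(x,m),\gamma_\tau)-\hat\ell_\tau(x,m,s_\tau(x,m),\gamma_\tau)|\le K^c_\tau\Ninf{d-m}+\mathcal{O}(1/\sqrt n)$. This follows from the cost‑Lipschitz part of Assumption~\ref{assumption: finite-lipschitz} together with Lemma~\ref{lemma:joint_empirical}: conditionally, $\hat{\mathfrak{D}}_\tau$ concentrates around $d(\cdot)\gamma_\tau(\cdot)(\cdot)$ with an $L^1$ fluctuation of order $1/\sqrt n$, and absorbing the $\tfrac1n\delta(\cdot)$ self‑correction and then swapping $d$ for $m$ costs at most $K^c_\tau\Ninf{d-m}+\mathcal{O}(1/\sqrt n)$ --- essentially the estimate already used inside Lemma~\ref{lemma:Lipschitz}.

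Next I would close the recursion. Since $(\hat x_\tau,\hat d_\tau)$ evolves as a Markov pair under $\gamma_\tau$ (Lemma~\ref{lemma:markov-mf} and Theorem~\ref{thm:dynamics_iid}), the difference of the two expected costs telescopes over stages into a sum of the per‑stage cost gaps above plus a sum of ``transport'' gaps, which arise because the chains $\{\hat x_\tau\}$ and $\{x_\tau\}$ have drifted to different state distributions, having been propagated through $\tfrac{n-1}{n}\hat d_\tau+\tfrac1n\delta(\cdot)$ versus $m_\tau$ at earlier stages; the transport gaps are bounded using the transition‑Lipschitz part of Assumption~\ref{assumption: finite-lipschitz}, the uniform bound $\ell_\tau\le K^c_\tau$ of Lemma~\ref{lemma:uniform_bounded_cost}, and the Lipschitz‑in‑mean‑field continuity of $\hat V_{\tau+1}$ that underlies Lemma~\ref{lemma:Lipschitz}. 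Feeding in Lemma~\ref{lemma:evolution-lip} --- which moves $\Exp{\Ninf{\hat d_\tau-m_\tau}}$ and $\Exp{\Ninf{\hat{\mathfrak{D}}_\tau-\mathfrak{M}_\tau}}$ across one stage with gain $K^m_\tau$ and additive term $\mathcal{O}(1/\sqrt n)$ --- and accumulating constants exactly according to~\eqref{eq:kv-ko} yields the stated bound $K^v_t\Ninf{\hat d_t-m_t}+K^o_t\,\mathcal{O}(1/\sqrt n)$; the $\mathcal{O}(1/\sqrt n)$ stays $T$‑independent because the one in Lemma~\ref{lemma:evolution-lip} is and $K^o_t$ aggregates only finitely many $K^v_\tau$.

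I expect the hard part to be bookkeeping rather than a genuinely new idea: one must show that the \emph{two} distinct error sources --- the per‑stage cost being evaluated at $\hat{\mathfrak{D}}_\tau$ rather than $\mathfrak{M}_\tau$, and $\hat x_\tau$ being distributed differently from $x_\tau$ --- are simultaneously absorbed into the single backward recursion~\eqref{eq:kv-ko}, all while every $\mathcal{O}(1/\sqrt n)$ term remains uniform in the horizon $T$. A secondary subtlety is that $\hat d_\tau$ is itself random and, on its own, not Markov (cf.\ the remark after Lemma~\ref{lemma:cost}), so the estimates must be carried in conditional‑expectation form and the Markov reduction of Lemma~\ref{lemma:markov-mf} invoked to legitimize the stagewise recursion.
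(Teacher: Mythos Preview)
Your proposal is correct and follows essentially the same approach as the paper: the paper's proof simply states that the argument is similar to that of Lemma~\ref{lemma:Lipschitz}, on noting that $\hat{\mathfrak{D}}_t$ and $\mathfrak{M}_t$ are governed by the same control law $s_t(\boldsymbol\cdot,m_t)$. You have correctly identified both the parent argument and the key simplification (no minimization, fixed common local laws), and your more detailed stagewise bookkeeping is exactly the content that the paper leaves implicit.
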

 \begin{proof}
 The proof  is similar to the proof of Lemma~\ref{lemma:Lipschitz}, on noting  that $\mathfrak{D}_t$ and $\mathfrak{M}_t$ are governed by the same control law $s_t(\boldsymbol \cdot, m_t)$, $t \in \mathbb{N}_T$.
%We first establish a useful result. Let $A_1,\ldots,A_t \in [0,1] $ and $B_1,\ldots,B_t \in [0,1]$, $t \in \mathbb{N}_T$.  By a simple induction, it can be shown that
%\begin{equation}
%\prod_{\tau=1}^t A_\tau - \prod_{\tau=1}^t B_\tau \leq \sum_{\tau=1}^t |A_\tau -B_\tau|.
%\end{equation}
%Now, consider
%\begin{equation}
%\mathbb{P} (\hat x_t, \hat{\mathfrak{D}}_t)- \Prob{x_t,\mathfrak{M}_t}=\prod_{\tau=1}^t A_\tau - \prod_{\tau=1}^t B_\tau,
%\end{equation}
%where
%\begin{align}
%\tilde{\mathfrak{D}}_t&:=\frac{n}{n-1} \hat{\mathfrak{D}}_t+\frac{1}{n-1}\delta(\hat x_{t}),\\
%A_t&:=\Prob{\hat x_t \mid \hat x_{t-1}, \hat d_{t-1}, s_t(\hat x_{t-1},m_t)}\\& \quad \times  \Prob{\tilde{\mathfrak{D}}_t\mid \hat x_{t-1}, \hat d_{t-1}, s_t(\boldsymbol \cdot, m_t)}\\
%B_t&:=\Prob{x_t \mid x_{t-1},  m_{t-1}, s_t( x_{t-1},m_t)}\\
%&\quad  \times \Prob{\mathfrak{M}_t\mid x_{t-1}, m_{t-1}, s_t(\boldsymbol \cdot, m_t)}
%\end{align}
% From~\eqref{} and~\eqref{},
% \begin{align}
%&| \mathbb{P} (\hat x_t, \hat d_t)- \Prob{x_t,m_t}|\leq  \sum_{\tau=1}^t|\mathbb{P}(\hat x_\tau \mid \hat x_{\tau-1}, \hat d_{\tau-1}, s_\tau(\hat x_{\tau-1},m_\tau))\\
%&-\Prob{x_\tau \mid x_{\tau-1},  m_{\tau-1}, s_\tau( x_{\tau-1},m_\tau)}|\\ &+|\mathbb{P}(\tilde{\mathfrak{D}}_\tau \mid \hat x_{\tau-1}, \hat d_{\tau-1}, s_\tau(\boldsymbol \cdot, m_\tau))-\Prob{\mathfrak{M}_\tau \mid x_{\tau-1}, m_{\tau-1}, s_\tau(\boldsymbol \cdot, m_\tau)}| \\
%&\leq \sum_{\tau=1}^t (K^p_\tau \Ninf{\hat d_\tau - m_\tau} +K^m_\tau \Ninf{\hat d_{\tau -1 } - m_{\tau-1}}+\mathcal{O}(\frac{1}{\sqrt n})).
% \end{align}
% From the triangle inequality, 
  \end{proof}
 
\begin{Theorem}\label{thm:finite-ns-convergence}
Let  Assumption~\ref{assumption: finite-lipschitz}   hold,  and  also  equations~\eqref{eq:DP_DSS} and~\eqref{eq:hat-V} admit a unique solution. Then,  $\{s_t(\boldsymbol \cdot, m_t),m_t\}_{t=1}^T$ is a solution of Problem~\ref{prob:pDSS}, i.e.
%\begin{equation}
$|J^\ast_n - \hat J_n | \in \mathcal{O}(\frac{1}{\sqrt n})$,
%\end{equation}
where  $J^\ast_n$ and $\hat J_n$  are the performance values   of the  generic player under the solutions satisfying~\eqref{eq:DP_DSS} and~\eqref{eq:hat-V}, respectively.
\end{Theorem}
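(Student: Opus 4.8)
The plan is to establish the two properties required by Definition~\ref{def:pDSS}. The first --- that the candidate is an infinite-population equilibrium --- is essentially built into the construction: when every player uses the common NS law $s_t(\boldsymbol \cdot, m_t)$ and the population is infinite, the strong law of large numbers (negligible-effect property) forces the deep state to coincide almost surely with the deterministic mean-field $m_t$ generated by~\eqref{eq:update_z_s}, so a unilaterally deviating player $i$ faces only a time-varying Markov decision process with transition $\mathcal{T}_t(\boldsymbol \cdot, x^i_t, \gamma^i_t(x^i_t), m_t)$ and per-step cost $\hat \ell_t$, its own contribution to $m_t$ being negligible. This is the $n=\infty$ specialization of Lemma~\ref{thm:2}, whose dynamic program is exactly~\eqref{eq:hat-V}; by Corollary~\ref{cor:infinite} it has a solution, and since $s_t(\boldsymbol \cdot, m_t)$ is by construction a fixed point, it is simultaneously player $i$'s best response and the population law, hence an infinite-population equilibrium.

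The substance is the convergence estimate. The plan is to interpose the infinite-population benchmark $\bar J := \Exp{\hat V_1(x_1, m_1)}$ (with $x_1 \sim P_X$, $m_1 = P_X$) and split $|J^\ast_n - \hat J_n| \le |J^\ast_n - \bar J| + |\bar J - \hat J_n|$. For the first term, the deep Nash value of the generic player equals $J^\ast_n = \Exp{V_1(x_1, d_1)}$ by the dynamic program~\eqref{eq:DP_DSS}, so Lemma~\ref{lemma:Lipschitz} at $t=1$ gives $|V_1(x_1, d_1) - \hat V_1(x_1, m_1)| \le K^v_1 \Ninf{d_1 - m_1} + K^o_1 \mathcal{O}(1/\sqrt n)$; taking expectations and using that $d_1$ is the empirical distribution of $n$ i.i.d.\ samples from $P_X = m_1$, so that $\Exp{\Ninf{d_1 - m_1}} \in \mathcal{O}(1/\sqrt n)$ (each coordinate has variance at most $1/(4n)$), yields $|J^\ast_n - \bar J| \in \mathcal{O}(1/\sqrt n)$. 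For the second term, telescoping the Bellman equation~\eqref{eq:hat-V} identifies $\bar J$ with the expected cumulative mean-field cost along the trajectory $m_{1:T}$, and the lemma immediately preceding the statement (taken at $t=T$, with the generic player's deep state initialized at $\hat d_1 = d_1$) bounds the gap between this and the realized finite-population cost $\hat J_n$ of the generic player under the NS strategy by $K^v_T \Exp{\Ninf{\hat d_T - m_T}} + K^o_T \mathcal{O}(1/\sqrt n)$; iterating Lemma~\ref{lemma:evolution-lip} forward from $\hat d_1 = d_1$ --- a finite product of the $n$-independent constants $K^m_t$ --- gives $\Exp{\Ninf{\hat d_t - m_t}} \in \mathcal{O}(1/\sqrt n)$ for every $t \in \mathbb{N}_T$, so $|\bar J - \hat J_n| \in \mathcal{O}(1/\sqrt n)$ as well. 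Adding the two bounds gives $|J^\ast_n - \hat J_n| \in \mathcal{O}(1/\sqrt n)$, so $\varepsilon(n) = \mathcal{O}(1/\sqrt n) \to 0$, which together with the first property certifies the candidate as an SMFE and thereby solves Problem~\ref{prob:pDSS}.

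The uniqueness hypothesis on~\eqref{eq:DP_DSS} and~\eqref{eq:hat-V} is used only in the convergence step: it makes the value functions $V_t$, $\hat V_t$ and the corresponding equilibrium local laws unambiguous, so that $J^\ast_n$ and $\hat J_n$ are well defined and the finite-population deep Nash equilibrium is the one that degenerates --- as $n \to \infty$ --- to the mean-field equilibrium built from $s_{1:T}$, rather than tracking a different branch of equilibria. The theorem itself is then the short triangle-inequality argument above; the real work lives in Lemma~\ref{lemma:Lipschitz}, and this is the step I expect to be the main obstacle: one must propagate the per-step Lipschitz constants of Assumption~\ref{assumption: finite-lipschitz} through the coupled backward recursion~\eqref{eq:DP_DSS}--\eqref{eq:hat-V} while (a) keeping the accumulated constants $K^v_t$, $K^o_t$ independent of $n$ and (b) keeping the residual $\mathcal{O}(1/\sqrt n)$ terms uniform in the horizon $T$ --- which is precisely why the constants are defined through the recursion~\eqref{eq:kv-ko} and why Lemma~\ref{lemma:evolution-lip} is stated with a $T$-free $\mathcal{O}(1/\sqrt n)$. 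Once those two lemmas are in place the remaining estimates are routine.
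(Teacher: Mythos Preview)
Your proposal is correct and follows essentially the same route as the paper: the triangle-inequality split through $\Exp{\hat V_1(x_1,m_1)}$, with Lemma~\ref{lemma:Lipschitz} handling $|J^\ast_n-\Exp{\hat V_1(x_1,m_1)}|$ and the unnumbered lemma preceding the theorem handling $|\hat J_n-\Exp{\hat V_1(x_1,m_1)}|$. The only minor difference is that for the second term the paper applies that lemma directly at the initial time (using $\hat x_1=x_1$, $\hat d_1=d_1$, so the bound reads $K^v_1\Exp{\Ninf{d_1-m_1}}+K^o_1\,\mathcal{O}(1/\sqrt n)$ straightaway), whereas you apply it at $t=T$ and then iterate Lemma~\ref{lemma:evolution-lip} forward to control $\Exp{\Ninf{\hat d_T-m_T}}$; this is a slightly longer path to the same $\mathcal{O}(1/\sqrt n)$ bound, and your explicit treatment of the infinite-population-equilibrium property and the role of the uniqueness hypothesis are useful additions the paper leaves implicit.
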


\begin{proof}
The proof is presented in Appendix~\ref{sec:proof_thm:finite-ns-convergence}.
\end{proof}

%\begin{remark}\label{remark:forward-bakckward}
%\emph{ It is possible to write the  dynamic program~\eqref{eq:hat-V} in the   form of two coupled  forward and backward equations similar to those in mean-field games such that  if  $z_{1:T}$ is known \emph{a priori},  a best-response strategy of the  generic player (as well as local control laws $\gamma_{1:T}$) can be computed from~\eqref{eq:hat-V}.  On the other hand,  if  the local control laws $\gamma_{1:T}$  are  known \emph{a priori},  $z_{1:T}$ can be computed   from~\eqref{eq:update_z}.  Now, one can  impose  various  fixed-point conditions  on the above forward-backward equations to ensure that  they  admit  a unique solution.}
% \end{remark}
%%  To avoid repetition,  the interested reader is referred to  mean-field games  for more details. 
%
\begin{remark}\label{remark-difference}
\emph{\edit{ It is to be noted that the dynamic program~\eqref{eq:hat-V}  may involve  a non-smooth non-convex optimization over   $\mathcal{P}(\mathcal{X})$,  which  is an uncountably  infinite set.  To overcome this hurdle,   one may replace  $\mathcal{P}(\mathcal{X})$  by a  finite (quantized)  space  similar to that proposed in~\cite[Corollary 1]{JalalCDC2017} and~\cite[Theorems 4 and 5]{Jalal2019MFT}.  An immediate implication is that if the quantization level is  $\sqrt n$, the resultant quantization solution converges to the sequential mean-field equilibrium at the same that the unquantized solution does (i.e. $1/ \sqrt n$)}. }
\end{remark}

\section{Infinite Horizon} \label{sec:infinite}
In this section, we extend our main results to the  infinite horizon discounted cost.  To this end, it is assumed that the model described in Section~\ref{sec:prob} is time-homogeneous; hence, the subscript $t$ is omitted  from the notation.  Denote by $\beta \in (0,1)$   the discount factor and  by $J^{i,\beta}_n$  the total expected discounted cost for player $i \in \mathbb{N}_n$, i.e.
\begin{equation}
J^{i,\beta}_n:=\Exp{\sum_{t=1}^\infty \beta^{t-1} c(x^i_t,u^i_t,\mathfrak{D}_t)}.
\end{equation}
 Define also the infinite-horizon counterpart of Bellman equation~\eqref{eq:DP_DSS} such that for  any $(x,d) \in \mathcal{X}\times  \Mspace$,  
 
 \begin{multline}\label{eq:Bellman_DSS}
V(x, d)=\min_{\gamma (x) \in  \mathcal{P}(\mathcal{U})}(  \ell(x, d,\gamma(x),\gamma)\\
+\beta \sum_{x^+ \in \mathcal{X}, d^+ \in \Mspace} \Prob{x^+, d^+| x,d,\gamma} V(x^+,d^+).
\end{multline}

%\begin{multline}\label{eq:Bellman_DSS}
%V(x,m)=\min_{\gamma \in \mathcal{G}} \Big( c(x,\gamma(x),m)  \\
%+\beta \Exp{V(x_{t+1},m_{t+1})|x_t=x,m_t=m, u_t= \gamma(x),\gamma_t=\gamma} \Big),
%\end{multline}

\begin{Theorem}\label{thm:inf-DSS}
  The Bellman equation~\eqref{eq:Bellman_DSS} admits a solution, and that solution  is  a sequential equilibrium  for   the infinite-horizon discounted cost function under DSS information structure.
\end{Theorem}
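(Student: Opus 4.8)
The plan is to prove the statement in two stages: first establish existence of a solution to the fixed-point Bellman equation~\eqref{eq:Bellman_DSS}, and then argue that any such solution yields a sequential equilibrium under DSS. For the first stage, I would fix a generic player competing against $n-1$ identical players using an index-invariant local law, and view~\eqref{eq:Bellman_DSS} as a coupled system: an outer fixed point over the response-to-population map $B(d)(\gamma)$ (as in the proof of Theorem~\ref{thm:fair}), and an inner contraction in the value function $V$. The discount factor $\beta \in (0,1)$ makes the single-agent Bellman operator a $\beta$-contraction on the space of bounded functions on $\mathcal{X}\times\Mspace$ (finite because $\mathcal{X}$ is finite and $\Mspace$ is a finite set for finite $n$), so for each fixed population law $\gamma$ there is a unique best-response value function; the map sending $\gamma$ to the minimizing local law of the resulting Bellman right-hand side is then shown to have nonempty convex values and a closed graph, exactly as in Theorem~\ref{thm:fair}, using the continuity of $\ell$ and of the deep-state transition kernel (Theorem~\ref{thm:dynamics_iid}, Lemmas~\ref{lemma:joint_empirical} and~\ref{lemma:per_step_cost}) with respect to $\gamma$. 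Kakutani--Fan--Glicksberg on the compact convex set $\mathcal{G}$ then delivers a fixed point $\gamma^\ast = B(d)(\gamma^\ast)$, and the associated $V$ solves~\eqref{eq:Bellman_DSS}.

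For the second stage, I would argue that the stationary strategy $g^\ast$ induced by the minimizer in~\eqref{eq:Bellman_DSS} is a sequential equilibrium, i.e. a best response from every stage $t_0$ onward. The key observation is Lemma~\ref{lemma:markov-mf}: when all other players use the index-invariant law $\gamma^\ast$, the pair $(x^i_t, d_t)$ is a controlled Markov process from player $i$'s viewpoint, with per-step cost $\ell$ by Lemma~\ref{lemma:per_step_cost}, and crucially this holds irrespective of player $i$'s own past strategy. Hence player $i$'s best-response problem from any $t_0$ is an infinite-horizon discounted MDP with state $(x^i_t,d_t)$, for which~\eqref{eq:Bellman_DSS} is precisely the optimality equation; standard discounted-MDP verification (the value of any policy is dominated by $V$, with equality achieved by the stationary minimizing policy) shows $g^\ast$ is optimal starting from every $t_0$. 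Combining the two stages, $g^\ast$ is simultaneously a best response for all players and index-invariant, hence a sequential (deep Nash) equilibrium for the infinite-horizon discounted cost.

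The main obstacle I anticipate is the second stage's subtlety: because $d_t$ is not Markov on its own (as the preceding remark emphasizes), one must be careful that conditioning on the full history $(x^i_{1:t}, d_{1:t})$ does not enlarge the relevant state beyond $(x^i_t,d_t)$ — this is exactly what Lemmas~\ref{lemma:markov-mf}--\ref{lemma:cost} buy, and I would need to invoke them to justify that $(x^i_t,d_t)$ is a genuine information state, not merely a sufficient statistic along the equilibrium path. A secondary technical point is confirming that the existence argument of Theorem~\ref{thm:fair} carries over unchanged when the finite-horizon backward recursion is replaced by a contraction fixed point; the continuity/closed-graph properties are inherited because $V$ is the uniform limit of the finite-horizon value functions $V_t$, each continuous in $\gamma$, and uniform limits preserve the closed-graph property on the compact set $\mathcal{G}$.
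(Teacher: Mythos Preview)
Your proposal is correct and uses the same essential ingredients as the paper---the $\beta$-contraction on bounded functions over the finite state space $\mathcal{X}\times\Mspace$, the Kakutani--Fan--Glicksberg fixed point over the compact convex set $\mathcal{G}$, and the information-state Lemmas~\ref{lemma:markov-mf}--\ref{lemma:per_step_cost}---but the decomposition is organized differently. The paper first invokes the finite-horizon best-response result (Lemma~\ref{thm:2}), introduces the time-reversal rescaling $W_t(x,d):=\beta^{-T+t-1}V_{T-t+2}(x,d)$ to recast the backward recursion as a forward iteration, establishes a fixed point $\gamma=B_T(d)(\gamma)$ for each finite horizon $T$ exactly as in Theorem~\ref{thm:fair}, and only then uses contraction to send $T\to\infty$ and obtain $V=\lim_T W_{T+1}$. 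You instead fix the population law $\gamma$ first, use contraction to produce the unique associated value function directly at the infinite horizon, and then apply the fixed-point argument once to the infinite-horizon best-response correspondence. Your route is more streamlined---it avoids the $W_t$ transformation and the need to control a sequence of horizon-dependent fixed points---while the paper's route leverages the already-proved finite-horizon existence (Theorem~\ref{thm:fair}) more explicitly. Your Stage~2, which spells out the sequential-equilibrium verification via the MDP optimality principle and the fact that $(x^i_t,d_t)$ is an information state irrespective of player $i$'s past strategy, is also more detailed than the paper, which handles this implicitly by citing Lemma~\ref{thm:2} at the outset.
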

\begin{proof}  
From Lemma~\ref{thm:2}, the best response strategy of any  player $i \in \mathbb{N}_n$  is given by~\eqref{eq:dp_value-i} for any finite horizon $T \in \mathbb{N}$.  Define  a real-valued function $W^i_t$ for any $ i \in \mathbb{N}_n$ and $t \in \mathbb{N}_T$,  $(x,d) \in \mathcal{X} \times  \Mspace$ as follows:
\begin{equation}\label{eq:W-def}
W^i_t(x,d):=\beta^{-T+t-1} V^i_{T-t+2}(x,d), 
\end{equation}
where $W^i_1(x,d):= \beta^{-T} V^i_{T+1}(x,d)=0$. It can be shown that:
\begin{multline}\label{eq:proof_infinite_DSS}
W^i_{T+1}(x^i,d)=\min_{\gamma^i(x^i) \in \mathcal{P}(\mathcal{U})} ( \ell(x^i,d,\gamma^i(x^i),\gamma^{-i})  \\
+\beta \sum_{ x^{i,+} \in \mathcal{X},d^+ \in \Mspace} \Prob{x^{i,+}, d^+|x,d, \gamma(x^i),\gamma^{-i}} W_{T}( x^{i,+}, d^+)),
\end{multline}
where $W^i_{T+1}(x^i,d)=V_1^i(x^i,d)$. For any $T \in \mathbb{N}$ and $d \in \Mspace$, define the best-response function $B_T(d): \mathcal{G} \rightarrow \mathcal{G}$, i.e.,
\begin{align}\label{eq:Best_2}
&B_T(d):=\argmin_{\gamma^i(\boldsymbol \cdot)} ( \ell(\boldsymbol \cdot, d, \gamma^{i}(\boldsymbol \cdot),\gamma^{-i})\\
&+\beta \sum_{ x^{i,+} \in \mathcal{X},d^+ \in \Mspace} \Prob{x^{i,+}, d^+|\boldsymbol \cdot ,d, \gamma^i(\boldsymbol \cdot),\gamma^{-i}} W_{T}( x^{i,+}, d^+)).
\end{align}
 Given any $x^i$, $d$ and $\gamma^{-i}$, the  solution of  equation~\eqref{eq:proof_infinite_DSS} can be expressed as follows:
 \begin{align}
&\argmin_{\gamma^i(x^i)} \ell(x^i,d, \gamma^i(x^i), \gamma^{-i})\\
 &+\beta \sum_{x^{i,+},d^{-i,+}} \big[\sum_{u} \Prob{x^{i,+} \mid x^{i}, u, d}\gamma^i(x^i)(u) \big] \\
& \times  \Prob{d^{-i,+}\mid d^{-i}, \gamma^{-i}} W_{T}(x^{i,+}, \frac{n-1}{n} d^{-i,+} + \frac{1}{n} \delta(x^{i,+})).
 \end{align}
From  Lemma~\ref{lemma:per_step_cost}, it results that the argument of the above equation is piece-wise linear in $\gamma^{i}(x^i)$;  hence, the  above minimization  is a convex optimization, i.e.,  $B_T(d)(\gamma^{-i})$ is a non-empty and  convex set. In addition,   $B_T(d)(\boldsymbol \cdot)$ is a closed graph because it is continuous with respect to $\gamma^{-i}$ according to Theorem~\ref{thm:dynamics_iid} and Lemmas~\ref{lemma:joint_empirical} and~\ref{lemma:per_step_cost}, on noting that binomial probability distribution is continuous with respect to its success probability. Since $\mathcal{G}$ is a  non-empty, compact and convex subset of a locally convex Hausdorff space,  the set-valued mapping $B$ has a fixed-point solution~\cite[Chapter 17]{charalambos2006infinite}. This means that there exists a local law $\gamma \in \mathcal{G}$ such that:
$
 \gamma=B_T(d)(\gamma).
 $

On the other hand, since  the discount factor $\beta$ is less than one,  the Bellman equation~\eqref{eq:proof_infinite_DSS} is a contractive  mapping with respect to  the infinity norm, implying that~\eqref{eq:proof_infinite_DSS} converges to a  solution, i.e., for  every $(x,d) \in \mathcal{X} \times \Mspace$,
\begin{equation}\label{eq:w-inf}
\lim_{T \rightarrow \infty } W_{T+1}(x,d)=W_\infty(x,d)=:V(x,d).
\end{equation}
\end{proof}

For any $x \in \mathcal{X}$ and $m \in \mathcal{P}(\mathcal{X})$, define the infinite-horizon counterpart of the dynamic program~\eqref{eq:hat-V}  as:
\begin{align}\label{eq:hat-V-inf}
&\hat V(x,m)=\min_{\gamma(x) \in \mathcal{P}(\mathcal{U})} ( \ell(x,m, \gamma(x),\gamma) \nonumber  \\
& \quad +\beta  \sum_{x^+ \in \mathcal{X}} \Prob{x^+| x, \gamma(x),m} V(x^+, \hat f(m,\gamma))).
\end{align}
Denote by $s(x,m)$ any solution of the fixed point equation~\eqref{eq:hat-V-inf}, and  define the following NS strategy:
\begin{equation}\label{eq:ns_strategy-inf}
\gamma^i_t= s(x^i_t,m_t), \quad i \in \mathbb{N}_n, t \in \mathbb{N},
\end{equation}
where $m_1=P_X$, and  for any $t \in \mathbb{N}$,
$
m_{t+1}=\hat f(m_t, s(\boldsymbol \cdot, m_t)).
$
\begin{remark}
\emph{Note  that  strategy~\eqref{eq:ns_strategy-inf} is not stationary  with respect to the local state as the process $\{m_t\}_{t=1}^\infty$ has dynamics.}
\end{remark}

%We impose the following assumption to ensure the stability of strategy~\eqref{eq:ns_strategy-inf}.
\begin{Assumption}\label{assump:infinite_horizon}
Let $\beta K^m <1$,  where $K^m$ is given by Lemma~\ref{lemma:evolution-lip}.
\end{Assumption}
%\begin{remark}
%\emph{The above assumption is only required when the dynamics of players are coupled (see Corollary~\ref{cor:decoupled}).}
%\end{remark} 

 \begin{Lemma}\label{lemma:lipschtiz-inf-V-hat-V}
 Let Assumptions~\ref{assumption: finite-lipschitz} and \ref{assump:infinite_horizon} hold.  Given any $x \in \mathcal{X}$, $d_1 \in \Mspace$ and $m_1 \in \mathcal{P}(\mathcal{X})$,   the relative distance $ | V(x, d_1) - \hat V(x, m_1)|$ is upper bounded by:
 \begin{align}
 | V(x, d_1) - \hat V(x, m_1)| \leq & \frac{(1-\beta + K^p) K^c}{1-\beta K^m} \Ninf{d_1-m_1}\\
 &\quad + \frac{1-\beta + K^p}{(1-\beta)} \frac{ K^c}{1-\beta K^m} \mathcal{O}(\frac{1}{\sqrt n}).
 \end{align}
  \end{Lemma}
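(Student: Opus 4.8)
The plan is to establish the bound by iterating the one-step comparison already developed for the finite horizon in Lemma~\ref{lemma:Lipschitz} and taking the horizon to infinity, exploiting the contraction afforded by the discount factor $\beta$ together with Assumption~\ref{assump:infinite_horizon}. First I would observe that by Theorem~\ref{thm:inf-DSS} and the analogous contraction argument for $\hat V$, the infinite-horizon value functions $V(x,d)$ and $\hat V(x,m)$ are the limits of the finite-horizon value functions $W_{T+1}(x,d) = V^{(T)}_1(x,d)$ and $\hat W_{T+1}(x,m) = \hat V^{(T)}_1(x,m)$ respectively, where the superscript $(T)$ denotes the $T$-horizon dynamic programs~\eqref{eq:DP_DSS} and~\eqref{eq:hat-V} in the time-homogeneous setting. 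Hence it suffices to bound $|V^{(T)}_1(x,d_1) - \hat V^{(T)}_1(x,m_1)|$ uniformly in $T$ and pass to the limit.

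Next I would set up the recursion. Define $A_t := \sup_{x}\sup_{\|d-m\|\le \rho}|V^{(T)}_t(x,d)-\hat V^{(T)}_t(x,m)|$ type quantities, but more precisely mirror the proof of Lemma~\ref{lemma:Lipschitz}: subtract the two Bellman-type recursions~\eqref{eq:Bellman_DSS} and~\eqref{eq:hat-V-inf}, use that the $\min$ of a difference is bounded by the difference evaluated at a common minimizer, and split the resulting bound into (i) a per-step cost discrepancy controlled by $K^c\|\mathfrak{D}_t - \mathfrak{M}_t\|$ via Assumption~\ref{assumption: finite-lipschitz}, which by Lemma~\ref{lemma:evolution-lip} contributes $K^c\bigl(K^m\|d-m\| + \mathcal{O}(1/\sqrt n)\bigr)$; (ii) a transition-kernel discrepancy, picking up the factor $K^p$ acting on the accumulated future cost, which is bounded since per-step costs are uniformly bounded by $K^c$ (Lemma~\ref{lemma:uniform_bounded_cost}) and summed geometrically to give a $\frac{K^p K^c}{1-\beta}$-type term; and (iii) the discounted continuation term $\beta$ times the next-stage value difference evaluated along the propagated deep-state/mean-field pair, which by Lemma~\ref{lemma:evolution-lip} again maps $\|d-m\|$ to $K^m\|d-m\| + \mathcal{O}(1/\sqrt n)$. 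Collecting, one gets a recursion of the form $a \le (1-\beta + K^p)K^c \,\rho + \beta K^m a + (\text{const})\mathcal{O}(1/\sqrt n)$ where $a$ is the infinite-horizon distance and $\rho = \|d_1 - m_1\|$; solving the fixed-point inequality, which is legitimate precisely because $\beta K^m < 1$, yields $a \le \frac{(1-\beta+K^p)K^c}{1-\beta K^m}\rho + \frac{(1-\beta+K^p)}{1-\beta}\frac{K^c}{1-\beta K^m}\mathcal{O}(1/\sqrt n)$, matching the claimed bound. The $(1-\beta)$ in the denominator of the noise term arises because the $\mathcal{O}(1/\sqrt n)$ errors, unlike the initial mismatch $\rho$, are injected at \emph{every} stage and must be summed geometrically before being damped by $1-\beta K^m$.

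I would present this by first stating the one-step inequality as a lemma-internal claim (essentially a restatement of the computation in Appendix~\ref{sec:proof_lemma:Lipschitz} adapted to the discounted, time-homogeneous, infinite-horizon setting), then closing the recursion and invoking the limits~\eqref{eq:w-inf} and its mean-field analogue to transfer the uniform-in-$T$ bound to $V$ and $\hat V$. The main obstacle I anticipate is bookkeeping: making sure the two distinct sources of error ($\rho$, injected once, versus $\mathcal{O}(1/\sqrt n)$, injected each stage) are propagated with the correct geometric weights so that the $1-\beta K^m$ denominator appears on the $\rho$-term while the noise term additionally carries the $1/(1-\beta)$ factor — and verifying that the constant hidden in $\mathcal{O}(1/\sqrt n)$ remains independent of $T$, which follows from the uniform-in-$T$ statement already supplied by Lemma~\ref{lemma:evolution-lip}. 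A secondary subtlety is justifying the interchange of the $T \to \infty$ limit with the supremum over $x$ and over $\|d_1 - m_1\| \le \rho$, which is immediate here because $\mathcal{X}$ is finite and the bounds are uniform.
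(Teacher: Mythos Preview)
Your proposal is correct and follows essentially the same route as the paper: reduce to the finite-horizon bound of Lemma~\ref{lemma:Lipschitz}, exploit $\beta K^m<1$ from Assumption~\ref{assump:infinite_horizon} to close the recursion, and pass to the limit via~\eqref{eq:w-inf} and its mean-field analogue. The only difference is packaging: the paper introduces the time-reversed, rescaled functions $\hat W_t(x,m):=\beta^{-T+t-1}\hat V_{T-t+2}(x,m)$ and the corresponding rescaled constants $\hat K^v_t:=\beta^{-T+t-1}K^v_{T-t+2}$, $\hat K^o_t:=\beta^{-T+t-1}K^o_{T-t+2}$, shows these satisfy $\hat K^v_{T+1}\le K^c(1+\tfrac{K^p}{1-\beta})+\beta K^m\hat K^v_T$ and $\hat K^o_{T+1}\le \hat K^v_T\sum_{\tau}\beta^{T-\tau+1}$, and then reads off the limits; your fixed-point inequality on the infinite-horizon distance collapses the same geometric series in one step. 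Your anticipated bookkeeping concern (the $\mathcal O(1/\sqrt n)$ term picking up the extra $1/(1-\beta)$ because it is injected at every stage) is exactly what the paper's separate tracking of $\hat K^v$ and $\hat K^o$ encodes.
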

 \begin{proof}
Given  the dynamic program~\eqref{eq:hat-V} and any finite horizon $T \in \mathbb{N}$,   define the non-negative real-valued function $\hat W_t (x,m)$ for  any $x \in \mathcal{X}$, $m \in \mathcal{P}(\mathcal{X})$ and $t \in \mathbb{N}_T$  as follows:
 \begin{equation}\label{eq:hat-W-def}
\hat W_t(x,m):=\beta^{-T+t-1} \hat V_{T-t+2}(x,m),
\end{equation}
where $\hat W_1(x,m):= \beta^{-T} \hat V_{T+1}(x,m)=0$.   Then,  one can obtain the following equality by simple algebraic manipulations:
\begin{multline}\label{eq:proof_infinite_pDSS}
\hat W_{T+1}(x,m)=\min_{\gamma(x) \in \mathcal{P}(\mathcal{X})} ( \ell(x,m,\gamma(x),\gamma) + \\
\beta \sum_{ x^+ \in \mathcal{X}} \Prob{\tilde x| x, \gamma(x),m}  \hat W_{T}( x^+, \hat f(m,\gamma) )),
\end{multline}
where $\hat W_{T+1}(x,m)=\hat V_1(x,m)$. Since~\eqref{eq:proof_infinite_pDSS} is  contractive, it admits a unique solution for any $(x,m) \in \mathcal{X} \times \mathcal{P}(\mathcal{X})$, i.e.
\begin{equation}\label{eq:hat-w-inf}
\lim_{T \rightarrow \infty } \hat W_{T+1}(x,m)=\hat W_\infty(x,m)=: \hat V(x,m).
\end{equation}
Now, define the following constants based on the ones given in~\eqref{eq:kv-ko}:
\begin{equation}\label{eq:kv-ko-hat}
\hat K^v_{t}:= \beta^{-T+t-1} K^v_{T-t+2}, \quad \hat K^o_{t}:= \beta^{-T+t-1} K^o_{T-t+2}, 
\end{equation}
where $\hat K^v_1:=\beta^{-T} K^v_{T+1}=0$ and $\hat K^o_1:=\beta^{-T} K^o_{T+1}=0$. It is straightforward to show that: 
\begin{equation}\label{eq:k_v-hat}
\begin{cases}
K^v_1=\hat K^v_{T+1}= K^c+ \beta \hat K^v_T K^z+K^p \sum_{\tau=1}^2 \beta^{\tau -1} K^c\\
\qquad \leq  K^c(1+\frac{1}{1-\beta}K^p) + \beta K^m \hat K^v_T,\\
K^o_1=\hat K^o_{T+1}= \beta ( \hat K^v_{T}+ \hat K^o_{T})=\sum_{\tau=1}^T \beta^{T-\tau+1} \hat K^v_{\tau} 
\\
 \qquad \leq \hat K^v_T \sum_{\tau=1}^T \beta^{T-\tau +1}.
\end{cases}
\end{equation}
From   Lemma~\ref{lemma:Lipschitz} and  equations~\eqref{eq:W-def},~\eqref{eq:hat-W-def} and~\eqref{eq:kv-ko-hat}, for  any $T \in \mathbb{N}$, the following inequality holds:
\begin{multline}\label{eq:proof-W-V}
| W_{T+1}(x,d_1) - \hat W_{T+1}(x,m_1) |=| V_{1}(x,d_1) - \hat V_{1}(x,m_1) |
\\
\leq \hat K^v_{T+1} \Ninf{d_1 - m_1} + \hat K^o_{T+1} \mathcal{O}(\frac{1}{\sqrt n}).
\end{multline}
From Assumption~\ref{assump:infinite_horizon} and equations~\eqref{eq:w-inf},~\eqref{eq:hat-w-inf},~\eqref{eq:k_v-hat} and~\eqref{eq:proof-W-V},  when  $T  \rightarrow \infty$   the following inequality is obtained: $| V(x,d_1) - \hat V(x,m_1) | \leq \hat K^v_{\infty} \Ninf{d_1 - m_1} + \hat K^o_{\infty} \mathcal{O}(\frac{1}{\sqrt n})$.
 \end{proof}
  Let  $\hat x_t$ and $\hat d_t$  denote, respectively,   the state  and deep-state of the generic player at time $t \in \mathbb{N}_T$ under the proposed NS strategy~\eqref{eq:ns_strategy-inf}, where $\hat x_1=x_1$ and $\hat d_1=d_1$. Then, the performance of  the player is given by
\begin{equation}\label{eq:proof_hat_J-inf}
  \hat J^\beta_n:=\Exp{\sum_{t=1}^\infty \beta^{t-1} \sum_{u \in \mathcal{U}}c(\hat x_t, u,\hat d_t) s(\hat x_t,m_t)(u)}.  
\end{equation}

 \begin{Theorem}\label{thm:inf-pDSS}
Let Assumptions~\ref{assumption: finite-lipschitz}--\ref{assump:infinite_horizon} hold,  and the Bellman equations~\eqref{eq:Bellman_DSS} and~\eqref{eq:hat-V-inf} admit a unique solution.  Then,   $\{s(\boldsymbol \cdot,m_t),m_t\}_{t=1}^\infty$  is a solution of Problem~\ref{prob:pDSS} with the infinite-horizon discounted cost function such that
\begin{equation}
|J^{\ast,\beta}_n - \hat J^{\beta}_n | \leq   \frac{(2-\beta )(1-\beta + K^p) }{1-\beta} \frac{ K^c}{1-\beta K^m}\mathcal{O}(\frac{1}{\sqrt n}),
\end{equation}
where  $J^{\ast,\beta}_n$  and $\hat J^{\beta}_n$  are the performance values   of the generic player under the solutions satisfying~\eqref{eq:Bellman_DSS}  and~\eqref{eq:hat-V-inf}, respectively.
 \end{Theorem}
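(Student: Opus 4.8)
The plan is to mirror the two-step template of the finite-horizon result (Theorem~\ref{thm:finite-ns-convergence}), lifting every finite-horizon estimate to its infinite-horizon analogue through the $\beta^{-T+t-1}$ rescaling and contraction already used in the proofs of Theorem~\ref{thm:inf-DSS} and Lemma~\ref{lemma:lipschtiz-inf-V-hat-V}. Concretely, I would first verify that $\{s(\boldsymbol\cdot,m_t),m_t\}_{t=1}^{\infty}$ is a \emph{sequential} infinite-population equilibrium, and then establish the finite-population performance bound.

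For the first requirement of Definition~\ref{def:pDSS}: when $n=\infty$ the strong law of large numbers collapses the deep state onto the deterministic mean field $m_t$ with dynamics $m_{t+1}=\hat f(m_t,s(\boldsymbol\cdot,m_t))$, so by the Markov property of Lemma~\ref{lemma:markov-mf} (read at $n=\infty$) a generic player who competes against a population all using $s$ faces a discounted Markov decision process with information state $(x_t,m_t)$ and per-step cost $\hat\ell$, which is bounded by Lemma~\ref{lemma:uniform_bounded_cost}. Since $\beta<1$, standard discounted dynamic programming identifies the optimal value of this MDP from any $(x,m)$ with the unique solution of \eqref{eq:hat-V-inf}; that solution exists by the contraction argument of Theorem~\ref{thm:inf-DSS} applied to \eqref{eq:proof_infinite_pDSS} (its finite-horizon existence being Corollary~\ref{cor:infinite}). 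Because $s$ attains the minimum in \eqref{eq:hat-V-inf}, it is a best response from every state, hence a best response at every stage $t_0$, which is exactly the first requirement.

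For the second requirement I would insert the infinite-population value $\hat V(x_1,P_X)$ as an intermediary and split
\[
|J^{\ast,\beta}_n-\hat J^{\beta}_n|\;\le\;\mathbb{E}\,\big|V(x_1,d_1)-\hat V(x_1,P_X)\big|\;+\;\big|\,\mathbb{E}[\hat V(x_1,P_X)]-\hat J^{\beta}_n\,\big|,
\]
using $J^{\ast,\beta}_n=\mathbb{E}[V(x_1,d_1)]$ from Theorem~\ref{thm:inf-DSS}, and that $d_1$ and $m_1=P_X$ are the (random, respectively deterministic) initial empirical and mean-field distributions with $\mathbb{E}\|d_1-P_X\|\in\mathcal{O}(1/\sqrt n)$ since each coordinate of $d_1$ is an average of $n$ i.i.d.\ Bernoulli variables. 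The first term is bounded directly by Lemma~\ref{lemma:lipschtiz-inf-V-hat-V}. The second term is the infinite-horizon counterpart of the performance lemma that precedes Theorem~\ref{thm:finite-ns-convergence}: since here $\mathfrak{D}_t$ and $\mathfrak{M}_t$ are propagated by the \emph{same} control law $s(\boldsymbol\cdot,m_t)$, Lemma~\ref{lemma:evolution-lip} gives $\mathbb{E}\|\mathfrak{D}_{t+1}-\mathfrak{M}_{t+1}\|\le K^m_t\|d_t-m_t\|+\mathcal{O}(1/\sqrt n)$; iterating this with the rescaling \eqref{eq:hat-W-def}--\eqref{eq:kv-ko-hat} and letting $T\to\infty$ — legitimate under Assumption~\ref{assump:infinite_horizon} — yields an $\mathcal{O}(1/\sqrt n)$ bound whose coefficients are the limiting constants $\hat K^v_{\infty},\hat K^o_{\infty}$ of \eqref{eq:k_v-hat}. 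Assembling these constants produces the stated factor $\tfrac{(2-\beta)(1-\beta+K^p)}{1-\beta}\cdot\tfrac{K^c}{1-\beta K^m}$.

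The main obstacle is the uniform-in-$T$ control needed to pass to the limit: the finite-horizon Lipschitz constants $K^v_t,K^o_t$ generically grow with the horizon, so the whole argument rests on (i) the $\mathcal{O}(1/\sqrt n)$ terms coming from Lemma~\ref{lemma:evolution-lip} being horizon-independent, which is already asserted there, and (ii) Assumption~\ref{assump:infinite_horizon}, $\beta K^m<1$, turning the recursion \eqref{eq:k_v-hat} into a convergent geometric series rather than one diverging linearly in $T$. A subsidiary point is that $s(\boldsymbol\cdot,m_t)$ need not be continuous in the mean field; finiteness of $\mathcal{X}$ and $\mathcal{U}$ makes this harmless, and — crucially — no continuity of the solution is ever invoked.
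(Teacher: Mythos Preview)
Your proposal is correct and follows essentially the same route as the paper: the same triangle-inequality split with $\hat V(x_1,m_1)$ as intermediary, Lemma~\ref{lemma:lipschtiz-inf-V-hat-V} for the first piece, and an infinite-horizon analogue of the performance lemma (driven by Lemma~\ref{lemma:evolution-lip} and the rescaling of \eqref{eq:kv-ko-hat} under Assumption~\ref{assump:infinite_horizon}) for the second piece. Your treatment of the first requirement of Definition~\ref{def:pDSS} is in fact more explicit than the paper's, which simply points to Theorem~\ref{thm:inf-DSS} and Corollary~\ref{cor:infinite} for existence of the index-invariant fixed point.
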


\begin{proof}
From the triangle inequality, it results that
\begin{equation}\label{eq:triangle-inf}
|J^{\ast,\beta}_n - \hat J^\beta_n | \leq |J^{\ast,\beta}_n -\Exp{\hat V(x_1,z_1)}|+ |\hat J^\beta_n -\Exp{\hat V(x_1,z_1)}|.
\end{equation}
The first term of the right-hand side of~\eqref{eq:triangle-inf} is bounded by $\mathcal{O}(\frac{1}{\sqrt n})$ according to Lemma~\ref{lemma:lipschtiz-inf-V-hat-V}, the monotonicity of the expectation operator,   the relation $J^{\ast\beta}_n=\Exp{V(x_1,d_1)}$,  and the fact that  $m_1$ converges to $m_1=P_X$ at the rate $\mathcal{O}(\frac{1}{n})$ in the mean-square sense.  The second  term of the right-hand side of~\eqref{eq:triangle-inf} is  also bounded by a similar bound $\mathcal{O}(\frac{1}{\sqrt n})$. The existence of an index-invariant strategy for~\eqref{eq:hat-V-inf} can be established following  similar steps in Theorem~\ref{thm:inf-DSS} and  Corollary~\ref{cor:infinite}. 
 The proof is now completed, on noting that $\mathcal{O}(\frac{1}{\sqrt n}) + \mathcal{O}(\frac{1}{\sqrt n})=\mathcal{O}(\frac{1}{\sqrt n})$.
\end{proof}

\begin{Corollary}\label{cor:decoupled}
The result of Theorem~\ref{thm:inf-pDSS} holds irrespective of  Assumption~\ref{assump:infinite_horizon} if the dynamics of  the players are decoupled.
\end{Corollary}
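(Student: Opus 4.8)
The plan is to revisit the two–term triangle inequality \eqref{eq:triangle-inf} used to prove Theorem~\ref{thm:inf-pDSS} and to check that, once $\Prob{y\mid x,u,d}$ is independent of $d$, every geometric series appearing there converges merely because $\beta<1$, so that the condition $\beta K^m<1$ of Assumption~\ref{assump:infinite_horizon} is never invoked. Two features of decoupled dynamics would drive the argument. First, $K^p=0$, so by \eqref{eq:def-hat-f} the infinite–population update reduces to the \emph{affine} map $\hat f(m,\gamma)=\sum_{x\in\mathcal{X}}m(x)\mathcal{T}(\boldsymbol\cdot,x,\gamma(x))$, i.e.\ a stochastic (Markov) map, and is therefore a non-expansion on $\mathcal{P}(\mathcal{X})$ in total variation (equivalently in $\ell_1$, which differs from the $\Ninf{\boldsymbol\cdot}$ of the body only by the fixed factor $|\mathcal{X}|$ that I would absorb into the unspecified $\mathcal{O}(1/\sqrt n)$ constants). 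Second, by \eqref{eq:expectation_theorem} the expected one-step update of the empirical distribution of the other players is again $\hat f(d^{-i}_t,\gamma^{-i}_t)$ up to an $\mathcal{O}(1/n)$ bias coming from the $\tfrac1n\delta(x^i_t)$ correction. I would carry out all distribution estimates in $\ell_1$ and convert at the end.

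First I would show, using only $\beta<1$, that the solution $\hat V$ of \eqref{eq:hat-V-inf} is Lipschitz in its mean-field argument with constant $K^c/(1-\beta)$: in the contraction argument behind \eqref{eq:hat-w-inf} the per-step cost is $K^c$-Lipschitz in the mean field by Assumption~\ref{assumption: finite-lipschitz}, the continuation is composed with the non-expansion $\hat f(\boldsymbol\cdot,\gamma)$, and the minimization over $\gamma$ preserves Lipschitz bounds; this is exactly where decoupledness matters, since a $d$-dependent kernel makes $\hat f$ Lipschitz only with a constant $1+\mathcal{O}(K^p)$, which reintroduces a $\beta K^m<1$ requirement. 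Next I would bound the value gap $\Ninf{V-\hat V}$, where $V$ solves \eqref{eq:Bellman_DSS}: writing $\mathcal{B},\hat{\mathcal{B}}$ for the two Bellman operators (both $\beta$-contractions with fixed points $V,\hat V$) and using $V=\mathcal{B}V$, one gets $\Ninf{V-\hat V}\le(1-\beta)^{-1}\Ninf{\mathcal{B}\hat V-\hat{\mathcal{B}}\hat V}$. Evaluated at any state and minimizing control, $\mathcal{B}\hat V$ and $\hat{\mathcal{B}}\hat V$ differ only through the cost discrepancy $\le K^c\,\Exp{\Ninf{\mathfrak{D}_t-\mathfrak{M}_t}}=\mathcal{O}(1/\sqrt n)$ (concentration of the empirical joint law, cf.\ Lemma~\ref{lemma:joint_empirical}) and through $\beta\,|\Exp{\hat V(x^+,d^+)}-\hat V(x^+,\hat f(d,\gamma))|\le\beta\tfrac{K^c}{1-\beta}\Exp{\Ninf{d^+-\hat f(d,\gamma)}}$, where Lemma~\ref{lemma:markov-mf} decouples $x^+$ from $d^{-i,+}$ and with decoupled dynamics $d^{-i,+}$ is a normalized sum of independent Bernoulli variables of mean $\hat f(d^{-i},\gamma)$, so $\Exp{\Ninf{d^+-\hat f(d,\gamma)}}=\mathcal{O}(1/\sqrt n)$. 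Hence $\Ninf{V-\hat V}=\mathcal{O}(1/\sqrt n)$, and since $d_1$ concentrates around $m_1=P_X$ at rate $\mathcal{O}(1/\sqrt n)$, the first term of \eqref{eq:triangle-inf}, $|J^{\ast,\beta}_n-\Exp{\hat V(x_1,m_1)}|\le\Ninf{V-\hat V}+\tfrac{K^c}{1-\beta}\Exp{\Ninf{d_1-m_1}}$, is $\mathcal{O}(1/\sqrt n)$.

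For the second term I would use that, under the NS strategy \eqref{eq:ns_strategy-inf} with decoupled dynamics, the $n$ players are i.i.d.\ time-inhomogeneous Markov chains with kernel $\mathcal{T}(\boldsymbol\cdot,\boldsymbol\cdot,s(\boldsymbol\cdot,m_t))$; thus the generic player's state $\hat x_t$ has the law of the infinite-population generic state $x_t$ and may be coupled to it, while $\Exp{\Ninf{\hat{\mathfrak{D}}_t-\mathfrak{M}_t}}=\mathcal{O}(1/\sqrt n)$ with a constant independent of $t$ and of $T$ (multinomial concentration, since $\Exp{\delta(\hat x^i_t,\hat u^i_t)}=\mathfrak{M}_t$). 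Unrolling \eqref{eq:hat-V-inf} along $s$ and using \eqref{eq:proof_hat_J-inf} together with the coupling then gives $|\hat J^{\beta}_n-\Exp{\hat V(x_1,m_1)}|\le\sum_{t\ge1}\beta^{t-1}K^c\,\Exp{\Ninf{\hat{\mathfrak{D}}_t-\mathfrak{M}_t}}\le\tfrac{K^c}{1-\beta}\mathcal{O}(1/\sqrt n)$. Adding the two terms of \eqref{eq:triangle-inf} yields $|J^{\ast,\beta}_n-\hat J^{\beta}_n|\in\mathcal{O}(1/\sqrt n)$ without Assumption~\ref{assump:infinite_horizon}, and existence of an index-invariant solution of \eqref{eq:hat-V-inf} follows as in Theorem~\ref{thm:inf-DSS} and Corollary~\ref{cor:infinite}.

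The hard part is the Lipschitz/value-gap step: in the body the estimation error $\Ninf{d_t-m_t}$ is propagated recursively through the dynamics (Lemma~\ref{lemma:evolution-lip}) and may be amplified by the factor $K^m$, so only $\beta K^m<1$ saves the infinite-horizon sum. The whole point of decoupledness is to replace that recursion: $\hat f(\boldsymbol\cdot,\gamma)$ is a genuine non-expansion, which makes the Lipschitz constant of $\hat V$ — and hence $\Ninf{V-\hat V}$ — finite with $\beta<1$ alone, while under the NS strategy the i.i.d.\ structure furnishes a uniform-in-$t$ concentration bound that is never iterated. The remaining care points are purely bookkeeping: carry the distribution estimates in $\ell_1$ and convert at the end, and verify that the $\mathcal{O}(1/n)$ biases (from $d$ versus $d^{-i}$, and from $d_1$ reaching $P_X$ only in the mean-square sense) are dominated by the $\mathcal{O}(1/\sqrt n)$ terms, which is immediate.
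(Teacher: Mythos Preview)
Your proposal is correct, but it is far more elaborate than the paper's argument, which is literally one line: decoupled dynamics means $K^p=0$ in Assumption~\ref{assumption: finite-lipschitz} and $K^m=1$ in Lemma~\ref{lemma:evolution-lip}, so Assumption~\ref{assump:infinite_horizon} ($\beta K^m<1$) becomes $\beta<1$, which is always true; Theorem~\ref{thm:inf-pDSS} then applies verbatim with its existing constants.

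You essentially rediscover this: your observation that $\hat f(\boldsymbol\cdot,\gamma)$ is a stochastic (hence non-expansive) map is exactly the statement $K^m\le1$, and your $K^p=0$ is explicit. But instead of plugging these two values into the existing machinery of Lemma~\ref{lemma:lipschtiz-inf-V-hat-V} and Theorem~\ref{thm:inf-pDSS}, you rebuild the Lipschitz estimate for $\hat V$, the Bellman-operator comparison for $\Ninf{V-\hat V}$, and the concentration argument for $\hat J^\beta_n$ from scratch. What you gain is a cleaner explanation of \emph{why} $K^m=1$ (the column-stochastic structure of the kernel when it does not depend on $d$) and, for the second term of~\eqref{eq:triangle-inf}, a genuinely different argument: you exploit that under the NS strategy with decoupled dynamics the players are i.i.d.\ time-inhomogeneous chains, yielding a uniform-in-$t$ multinomial bound that is never iterated. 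The paper, by contrast, simply reuses the recursive estimate of Lemma~\ref{lemma:evolution-lip}, which is now harmless because $K^m=1$. Your route is more self-contained and gives somewhat tighter constants; the paper's route shows the corollary is an immediate specialization and requires no new work.
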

\begin{proof}
he proof follows from the fact that when the dynamics of  players are decoupled,   $K^m=1$ in Lemma~\ref{lemma:evolution-lip} and $K^p=0$ in Assumption~\ref{assumption: finite-lipschitz}. 
\end{proof}

\begin{remark}
Similar to Remark~\ref{remark-difference}, one can use a quantized space  for the infinite-horizon cost function wherein the quantization error is upper bounded by the constant  proposed in Theorem~\ref{thm:inf-pDSS}.
\end{remark}

 \section{Numerical Example}\label{sec:numerical}
 
\textbf{Example 1.} Consider $n$ players ($n \in \mathbb{N}$)  sharing a common resource, e.g.,  a communication channel.  Each player independently makes a request  with probability $p \in (0,1)$  to have access to the resource at  any  time $t \in \mathbb{N}$. Let $q \in (0,1)$ denote the probability according to which  a request is served. If a player has a pending request, it is not allowed to send another request until its current request is served. Denote by $x^i_t \in \{0,1\}$ the state of player $i \in \mathbb{N}_n$ at time $t \in \mathbb{N}$, where $x^i_t=1$ means that player $i$ has a request at time $t$ and $x^i_t=0$ means  it has no request.

%
%If the number of requests is very small, the service provider charges each player extra  fee. Let $\alpha \in \mathbb{N}_n$  be the least number of requests greater than which the service provider  does not charge the players.
%
%If the number of requests at each time $t \in \mathbb{N}$ is greater than a threshold $\alpha \in \mathbb{N}_n$,   the company  makes a reasonable profit; otherwise,  the company  charges each player extra fee $c_{\text{underload}} \in \mathbb{R}_{>0}$. 

The common resource is provided by a third-party company whose profit depends on the number of requests (the higher number of requests the more profit). Let $\alpha \in \mathbb{N}_n$ denote a threshold above which  the company makes a reasonable profit, and if  the number of requests is less than   $\alpha $,   each player  has to pay a fee $c_{\text{underload}} \in \mathbb{R}_{>0}$. On the other hand,  when  the number of requests   is larger than a threshold $\gamma \in \mathbb{N}_n$, players may experience  some discomfort such as delay  in accessing the resource. Denote by $c_{\text{overload}} \in \mathbb{R}_{>0}$ the cost associated with an overload of requests. At each time instant, there are three options available to players: (1) everyone sends a request  without any commitment to others; (2) everyone  commits to  decrease  the number of requests,  and (3)  everyone commits to increase the number of requests.  Denote by $u^i_t$ the action of player $i$ at time $t$, and let $u^i_t=1,2,3$ be respectively the action corresponding to the options (1)--(3) described above. The transition probability of each player $i \in \mathbb{N}_n$ under action $u^i_t=1$ is given by
\begin{equation}
\Prob{x^i_{t+1}\mid x^i_t,u^i_t=1}=
\left[\begin{array}{cc}
1-p & p\\
q & 1-q
\end{array}\right].
\end{equation}
Let $ p_{D} \leq p$ denote the  probability of request when players agree  to decrease the number of requests. In such a case,  players  with pending requests drop them with some probability. Let $q_D \geq q$ denote the probability  that a request is not pending  (either served or dropped). Therefore, the transition probability of player  $i $ under action  $u^i_t=2$ is described by
\begin{equation}
\Prob{x^i_{t+1}\mid x^i_t,u^i_t=2}=
\left[\begin{array}{cc}
1-p_D & p_D\\
q_D & 1-q_D
\end{array}\right].
\end{equation}
Denote by $ p_{I} \geq p$ the  probability of request when players agree  to  increase the number of requests. Hence, the transition probability of player  $i $ under action $u^i_t=3$ is expressed by
\begin{equation}
\Prob{x^i_{t+1}\mid x^i_t,u^i_t=3}=
\left[\begin{array}{cc}
1-p_I & p_I\\
q & 1-q
\end{array}\right].
\end{equation}
\edit{Since the state space is binary,  the empirical distribution of one state is sufficient to identify  that of the other state. Hence,  with a slight abuse of notation, denote  $d_t$ as the  empirical distribution of the requests of  all players  at time $t \in \mathbb{N}$, i.e.}
\begin{equation}
d_t=\frac{1}{n} \sum_{i \in \mathbb{N}_n} \ID{x^i_t=1}.
\end{equation}
If player $i$ wishes  to selfishly use the shared resource  without taking the states of other players  into account,  others  can  penalize that player  by    sending either a small number of requests resulting in $c_{\text{underload}}$ or a large number of requests leading to $c_{\text{overload}}$ as follows:
\begin{equation}
c(x^i_t,d^{-i}_t)=\begin{cases}
c_{\text{underload}}, & (n-1) d^{-i}_{t}  < \alpha -x^i_t,\\
c_{\text{overload}}, & (n-1) d^{-i}_t \geq \gamma -x^i_t.
\end{cases}
\end{equation}
Given a discount factor $\beta \in (0,1)$, define
\begin{equation}
J^{i,\beta}_n=\Exp{ \sum_{t=1}^\infty \beta^{t-1} c(x^i_t,d^{-i}_t)}.
\end{equation}
\edit{For the sake of transparency, the company announces   the empirical distribution of requests at each time instant, i.e.,   the information structure is deep-state sharing. The  objective of  the players is to reach a fair agreement (Nash strategy)  among themselves to efficiently utilize the shared resource.} 
%The solution of Example 1 can be  obtained  by using the results of  Theorems~\ref{thm:dynamics_iid} and~\ref{thm:fair} and Lemma~\ref{nnn}. 
   Figure~\ref{fig1}  displays a  Nash strategy for  the following numerical parameters:
\begin{align}
&n=100, p=0.3, q=0.3, p_D=0.2, q_D=0.4, p_I=0.4,\\
 &\beta=0.9, \alpha=30,\gamma=70, 
c_{\text{underload}}=5, c_{\text{overload}}=1.
\end{align}
 The decision of each player $i \in \mathbb{N}_n$ at time $t \in \mathbb{N}$ depends on  the local state $x^i_t$  and  \edit{the empirical distribution of the requests of other players} $d^{-i}_t$.  It is shown in  Figure~\ref{fig1}  that \edit{the trajectory of  the number of  requests of  players   lies between the lower and upper bounds for different initial states.}
\begin{figure}
\centering
\hspace{-.2cm}
\includegraphics[trim={0 10cm 0 10cm},clip, width=\linewidth]{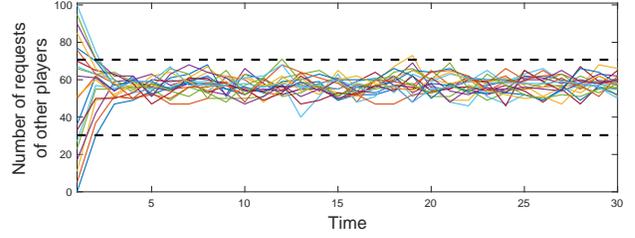}
\caption{A trajectory  of the number of requests in  Example 1, given   $20$  different initial states.}\label{fig1}
\end{figure}

\section{Conclusions} \label{sec:con}
In this paper,   a game consisting of a set of homogeneous players  wishing to reach an index-invariant  (fair) Nash equilibrium  was studied. The players were modeled as controlled Markov chains, where  their  dynamics   and cost functions  were coupled through the empirical distribution of their states (deep state). Two non-classical information structures, namely deep-state sharing and no-sharing information structures, were investigated. Since the number of players was finite  (and not necessarily large), the effect of a single player on other players was non-negligible and the deep state  was a random vector (rather than a deterministic one).    A sequential equilibrium was identified under  the deep-state sharing information structure and an approximate  one   was proposed under the no-sharing structure  for both finite- and infinite-horizon cost functions.

\bibliographystyle{IEEEtran}
\bibliography{Jalal_Ref}

\appendices

\section{Proof of  Lemma~\ref{lemma:joint_empirical}}\label{sec:proof_lemma:joint_empirical}
From equation~\eqref{eq:def-MF-n-1},  one has:
\begin{equation}
(n-1) \mathfrak{D}^{-i}_t(x,u)=\sum_{j\neq i}^n \ID{x^i_t=x} \ID{u^i_t=u},
\end{equation}
where the above equation consists of $(n-1)d^{-i}_t(x)$  binary random variables with success probability $\gamma^{-i}_t(x)(u)$. In addition, the following holds:
\begin{equation}
\Exp{(n-1) \mathfrak{D}^{-i}_t(x,u)}
=(n-1)d^{-i}_t(x)\gamma^{-i}_t(x)(u),
\end{equation}
where $(n-1) \mathfrak{D}^{-i}_t(x,u)$ has Binomial probability distribution.

\section{Proof of Lemma~\ref{lemma:conditional_prob}}\label{sec:proof_lemma:conditional_prob}
The proof follows from  the fact that the conditional probability~\eqref{eq:conditional_prob_non} is invariant  to the  permutation of  the other players.  More precisely,  the primitive random variables,  dynamics and control laws of the other  players are exchangeable, which   means that all permutations are equally likely to happen. Therefore, the conditional probability~\eqref{eq:conditional_prob_non} is exchangeable, i.e.,  for any $j,k  \in \mathbb{N}_n$, $j,k \neq i$:
$\Prob{\sigma_{j,k} \mathbf x^{-i}_t \mid  x^i_{1:t},  d^{-i}_{1:t},\gamma^{i}_{1:t}, \gamma^{-i}_{1:t}}
=\Prob{\mathbf x^{-i}_t \mid  x^i_{1:t},  d^{-i}_{1:t}, \gamma^{i}_{1:t},\gamma^{-i}_{1:t}}$. 
Hence,  the above conditional probability is  representable by $d^{-i}_t$, i.e.,  it  is equal to  zero if the empirical distribution of $\mathbf x^{-i}_t$ is not $d^{-i}_t$, and   it is equal to   $1/H(d^{-i}_t)$ otherwise, where $H (d^{-i}_t)$ is the number of all  realizations of $\mathbf x^{-i}_t$ whose empirical distribution is $d^{-i}_t$. Note that function $H$ is independent of  the strategies  of players. 

\section{Proof of Lemma~\ref{lemma:cost}}\label{sec:proof_lemma:cost}
The proof follows from Lemma~\ref{lemma:conditional_prob} such that:
\begin{align}
&\Exp{c_t(x^i_t,u^i_t, \mathbf x_t, \mathbf u_t) \mid  x^i_{1:t}, d_{1:t},\gamma^i_{1:t},  \gamma^{-i}_{1:t}}\\
& =\sum_{\mathbf x^{-i}_t } \sum_{\mathbf u^{-i}_t} \big[\sum_{u \in \mathcal{U}}c_t(x^i_t,u, \mathbf x_t, \mathbf u_t) \gamma^{i}_t(x^i_t)(u)\big]  \\
&\quad \times \Prob{\mathbf x_t^{-i}, \mathbf u_t^{-i}\mid  x^i_{1:t}, d^{-i}_{1:t},\gamma^i_{1:t}, \gamma^{-i}_{1:t}}\\
&=\sum_{\mathbf x^{-i}_t } \sum_{\mathbf u^{-i}_t}  \big[\sum_{u \in \mathcal{U}}c_t(x^i_t,u, \mathbf x_t, \mathbf u_t) \gamma^{i}_t(x^i_t)(u)\big] \\
&\times  \Prob{\mathbf x_t^{-i}\mid  x^i_{1:t},  d^{-i}_{1:t},u^i_{1:t}, \gamma^{-i}_{1:t}}   \prod_{j\neq i}^n \gamma^{-i}_t(x^j_t)(u^j_t)\\
&\substack{(a)\\=:}\ell_t(x^i_{t}, d_t,\gamma^i_t,\gamma^{-i}_t),
\end{align}
where $(a)$ follows from equation~\eqref{eq:def-MF-n-1}.

\section{Proof of Lemma~\ref{lemma:evolution-lip}}\label{sec:proof_lemma:evolution-lip}
The proof follows from the triangle inequity and the fact that empirical distribution converges to the expectation at the rate $\mathcal{O}(\frac{1}{\sqrt{n}})$ in the mean-square sense. In particular,
\begin{align}\label{eq:proof_lemma_evolution}
\Exp{\Ninf{d_{t+1} - m_{t+1}} } \leq  \mathbb{E}\Ninf{\Exp{d_{t+1}}- m_{t+1}} + \mathbb{E}\Ninf{d_{t+1} - \Exp{d_{t+1}}} \nonumber \\
\substack{(a)\\=} \mathbb{E}\Ninf{\hat f_t(d_t,\gamma_t) - \hat f_t(m_t,\gamma_t)} + \mathbb{E}\Ninf{d_{t+1} - \Exp{d_{t+1}}} \nonumber \\
\substack{(b)\\ \leq}  K^m_t \Ninf{d_t -m_t} + \mathcal{O}(\frac{1}{\sqrt{n}}),
\end{align}
where $(a)$ follows from the the fact that for any $y \in \mathcal{X}$,
\begin{align}
\Exp{d_{t+1}(y)|d_t,\gamma_t}=\Exp{\sum_{i=1}^n \ID{x^i_{t+1}=y}\mid d_t,\gamma_t}\\
=\sum_{x \in \mathcal{X}} d_t(x) \Prob{x^i_{t+1}=y |x^i_t=x, \gamma^i_t(x)=\gamma_t(x),d_t}\\
=\sum_{x \in \mathcal{X}} d_t(x)\mathcal{T}_t(y,x,\gamma_t(x),d_t)=\hat f_t(d_t,\gamma_t))(y),
\end{align}
and $(b)$ follows from~\cite[Lemmas 1 and 2]{JalalCDC2017}. In addition, for any $x \in \mathcal{X}$ and $u \in \mathcal{U}$,
\begin{align}
\mathbb{E}|\mathfrak{D}_{t+1}(x,u)- \mathfrak{M}_{t+1}(x,u)|\leq \mathbb{E}| \Exp{\mathfrak{D}_{t+1}(x,u)}-\mathfrak{M}_{t+1}(x,u)| \\
+\mathbb{E}|\mathfrak{D}_{t+1}(x,u)-\Exp{\mathfrak{D}_{t+1}(x,u)}|\\
\substack{(c)\\=} \Exp{d_{t+1}(x) \gamma(x)(u) - m_{t+1}(x) \gamma(x)(u) } + \mathcal{O}(\frac{1}{\sqrt n})\\
\substack{(d)\\ \leq }  K^m_t \Ninf{d_t -m_t} + \mathcal{O}(\frac{1}{\sqrt{n}}),
\end{align}
where $(c)$ follows from Lemma~\ref{lemma:joint_empirical} and $(d)$ follows from~\eqref{eq:proof_lemma_evolution}. 

\section{Proof of Lemma~\ref{lemma:Lipschitz}}\label{sec:proof_lemma:Lipschitz}
For ease of  display, let  $\tilde{\mathfrak{D}}_{t}:=\frac{n}{n-1} \mathfrak{D}_{t} + \frac{1}{n-1} \delta(x_{t})$, $t \in \mathbb{N}_T$. The proof follows from a backward induction and equation~\eqref{eq:DP_DSS} such that  at the terminal time $t=T$ and  any state  $x \in \mathcal{X}$, one has:
\begin{align}
&V_T(x, d_T)=\min_{\gamma_T(x) } \ell_T(x,d_T,\gamma_T(x),\gamma_T)\\
&=\min_{\gamma_T(x) } \Exp{c_T(x,u_T,\mathfrak{D}_T)\mid x,d_T,\gamma_T(x), \gamma_T}\\
&=\min_{\gamma_T(x)}\mathbb{E}_{\tilde{\mathfrak{D}_{T}}} [\sum_{u \in \mathcal{U}}c_T(x,u, \frac{n-1}{n}\tilde{\mathfrak{D}}_T+\frac{1}{n}\delta(x,u)) \gamma_T(x)(u)]\\
& \substack{(a)\\ \leq } \min_{\gamma_T(x)}\mathbb{E}_{\tilde{\mathfrak{D}_{T}}}[ \sum_{u \in \mathcal{U}}c_T(x,u, \frac{n-1}{n}\mathfrak{D}^{-i}_T+\frac{1}{n}\delta(x,u)) \gamma_T(x)(u)\\
&\quad -\sum_{u \in \mathcal{U}}c_T(x,u,\mathfrak{M}_T) \gamma_T(x)(u)]\\
&+\mathbb{E}_{\tilde{\mathfrak{D}_{T}}}[ \sum_{u \in \mathcal{U}}c_T(x,u,\mathfrak{M}_T) \gamma_T(x)(u) ]\\
& \substack{(b) \\ \leq } \Exp{ \|\frac{n-1}{n}\mathfrak{D}^{-i}_T-\mathfrak{M}_T \|}+\frac{1}{n}+ \min_{\gamma_T(x)} \sum_{u \in \mathcal{U}}c_T(x,u,\mathfrak{M}_T) \gamma_T(x)(u)\\
&\substack{(c)\\=} K^c_T \Ninf{d_T -m_T} + \mathcal{O}(\frac{1}{\sqrt n}) + \hat V_t(x_T,m_T),
\end{align}
where $(a)$ follows from the triangle inequality,  per-step cost being non-negative (by definition), and the monotonicity of the minimum operator; $(b)$ follows from Assumption~\ref{assumption: finite-lipschitz}, equation~\eqref{eq:cost_infinite} and the monotonicity of the minimum operator, and $(c)$ follows from~\eqref{eq:hat-V}.  Assume now that   inequality~\eqref{eq:inequality-value}  holds at time $t+1$ for any $x \in \mathcal{X}$, i.e.,  
\begin{multline}\label{eq:indunction-t+1-approximate}
| V_{t+1}(x,d_{t+1}) -\hat{V}_{t+1}(x,m_{t+1}) | \leq K^v_{t+1} \| d_{t+1} -m_{t+1}\| \\ + K^o_{t+1}\mathcal{O}(\frac{1}{\sqrt{n}}).
\end{multline}
The objective is to show   that  it holds  at time $t$ as well.  It follows from \eqref{eq:DP_DSS} that for any $x_t \in \mathcal{X},$
\begin{align}\label{eq:proof_ndunction-t-approximate}
V_t(x_t,d_t)&=\min_{\gamma_t(x_t)} \left( \ell_t(x_t,d_t,\gamma_t(x_t),\gamma_t)+\Exp{ V_{t+1}(x_{t+1},d_{t+1})} \right) \nonumber \\
&=\min_{\gamma_t(x_t)} (\ell_t(x_t,d_t,\gamma_t(x_t),\gamma_t) \pm \ell_t(x_t,m_t,\gamma_t(x_t),\gamma_t) \nonumber \\
&+\mathbb{E}_{\tilde{\mathfrak{D}}_{t+1}} [\sum_{x_{t+1}} A_t(C_t-D_t) +(A_t-B_t )D_t + B_t D_t]),
\end{align}
where
\begin{align}
A_t&:=\Prob{x_{t+1} \mid x_t,d_t,\gamma_t(x_t)},\\
B_t&:=\Prob{x_{t+1} \mid x_t, m_t, \gamma_t(x_t)},\\
%C_t&:=\Prob{\frac{n}{n-1} d_{t+1} + \frac{1}{n-1} \delta(x_{t+1}) \mid x_t, d_t, \gamma_t },\\
%D_t&:= \Prob{m_{t+1} \mid m_t, \gamma_t},\\
C_t&:=V_{t+1}(x_{t+1},d_{t+1}),\\
D_t&:=\hat V_{t+1}(x_{t+1}, m_{t+1}).
\end{align}
We now  find an upper bound for  each  term in~\eqref{eq:proof_ndunction-t-approximate}. From Assumption~\ref{assumption: finite-lipschitz} and equation~\eqref{eq:cost_infinite}, it  results that:
\begin{align}
&\Ninf{\ell_t(x_t,d_t,\gamma_t(x_t),\gamma_t) - \ell_t(x_t,m_t,\gamma_t(x_t),\gamma_t} \\
& \mathbb{E}_{\tilde{\mathfrak{D}_t}} [\sum_{u \in \mathcal{U}} c_t(x_t,u,\frac{n-1}{n}\tilde{\mathfrak{D}}_{t}+\frac{1}{n} \delta(x_t,u))\gamma_t(x_t)(u)]\\
&-\mathbb{E}_{\tilde{\mathfrak{D}_t}} [\sum_{u \in \mathcal{U}} c_t(x_t,u, \mathfrak{M}_t)\gamma_t(x_t)(u)]\\
&\leq K^c_t \Ninf{d_t - m_t} +\mathcal{O}(\frac{1}{\sqrt n}).
\end{align}
In addition, from Lemma~\ref{lemma:evolution-lip} and equation~\eqref{eq:indunction-t+1-approximate}, one arrives at:
\begin{align}
\sum_{x_{t+1}} A_t(C_t-D_t)  \leq K^v_{t+1} \Ninf{d_{t+1} - m_{t+1}} + K^o_{t+1} \mathcal{O}(\frac{1}{\sqrt n})\\
\leq K^v_{t+1} K^m_t \Ninf{d_t - m_t} + (K^v_{t+1}+  K^o_{t+1}) \mathcal{O}(\frac{1}{\sqrt n}).
\end{align}
From Assumption~\ref{assumption: finite-lipschitz} and Lemma~\ref{lemma:uniform_bounded_cost}, it follows that:
\begin{align}
\sum_{x_{t+1}} (A_t-B_t) D_t  \leq K^p_t (\sum_{\tau=1}^{t+1} \beta^{\tau}K^c_\tau)  \Ninf{d_t - m_t}. 
\end{align}
Note that $
\sum_{x_{t+1}}  B_t D_t= \Exp{ \hat V_{t+1}(x_{t+1}, m_{t+1}) \mid x_t, m_t, \gamma_t}$.
Therefore, it results from~\eqref{eq:proof_ndunction-t-approximate} that
\begin{align}
&V_t(x_t,d_t) \leq K^v_t \Ninf{d_t - m_t} + K^o_{t} \mathcal{O}(\frac{1}{\sqrt n}) \\
&+ \min_{\gamma_t(x_t)} \ell_t(x_t,m_t,\gamma_t(x_t),\gamma_t)+ \Exp{ \hat V_{t+1}(x_{t+1}, m_{t+1}) \mid x_t, m_t, \gamma_t}\\
&= K^v_t \Ninf{d_t - m_t} + K^o_{t} \mathcal{O}(\frac{1}{\sqrt n})+ \hat V_t(x_t,m_t).
\end{align}

\section{Proof of Theorem~\ref{thm:finite-ns-convergence}}\label{sec:proof_thm:finite-ns-convergence}
From the triangle inequality, it results that
\begin{equation}\label{eq:triangle}
|J^\ast_n - \hat J_n | \leq |J^\ast_n -\Exp{\hat V_1(x_1,m_1)}|+ |\hat J_n -\Exp{\hat V_1(x_1,m_1)}|.
\end{equation}
The first term of the right-hand side of~\eqref{eq:triangle} is bounded as follows, on noting that  $J^\ast_n=\Exp{V_1(x_1,d_1)}$. For any time $t \in \mathbb{N}_T$ and state $x_1 \in \mathcal{X}$,
\begin{align}
&|\mathbb{E} [V_1(x_1,d_1)] - \mathbb{E }[\hat V_1(x_1,m_1)]|  \substack{(a) \\ \leq } \mathbb{E} |V_1(x_1,d_1) - \hat V_1(x_1,m_1) | \nonumber  \\
& \substack{(b) \\ \leq } K^v_1 \Exp{\Ninf{d_1 -m_1} }+ K^o_1 \mathcal{O}(\frac{1}{\sqrt n})\\
& \substack{(c) \\ \leq }  K^v_1 \mathcal{O}(\frac{1}{\sqrt n})+ K^o_1 \mathcal{O}(\frac{1}{\sqrt n}),
\end{align}
where $(a)$ follows from the monotonicity of the expectation operator;  $(b)$ follows from  Lemma~\ref{lemma:Lipschitz},  and $(c)$ follows from  the relation $m_1=P_X$ and  the fact  that the empirical distribution $d_1$ converges to  its limit $P_X$ in the mean-square sense at the rate $\mathcal{O}(\frac{1}{n})$ (see~\cite[Lemma~2]{JalalCDC2017} for more details). Similarly,  the second term of the right-hand side of~\eqref{eq:triangle} is also bounded by $\mathcal{O}(\frac{1}{\sqrt n})$, on noting that initially $\hat x_1=x_1$ and $\hat d_1=d_1$.  

\end{document}